\tikzset{individu/.style={draw,thick}}
\newcommand{\mmp}{\mathbb{P}}
\newcommand{\me}{\mathbb{E}}
\newcommand{\mn}{\mathbb{N}}
\newcommand{\mr}{\mathbb{R}}
\newcommand{\dod}{\overset{{\rm d}}{\to}}
\DeclareMathOperator{\1}{\mathbbm{1}}
\theoremstyle{plain}
\newtheorem{theorem}{Theorem}[section]
\newtheorem{corollary}[theorem]{Corollary}
\newtheorem{lemma}[theorem]{Lemma}
\theoremstyle{definition}
\theoremstyle{remark}
\newtheorem{remark}[theorem]{Remark}
\numberwithin{equation}{section}
\newcommand{\N}{\mathbb{N}}
\newcommand{\Z}{\mathbb{Z}}
\newcommand{\R}{\mathbb{R}}
\DeclareMathOperator{\F}{\mathcal{F}}
\newcommand{\cZ}{\mathcal{Z}}
\renewcommand{\S}{\mathcal{S}}
\newcommand{\ceil}[1]{{\left\lceil #1 \right\rceil}}
\renewcommand{\bar}[1]{\overline{#1}}
\renewcommand{\tilde}[1]{\widetilde{#1}}
\renewcommand{\hat}[1]{\widehat{#1}}
\newcommand{\dd}{\mathrm{d}}
\DeclareMathOperator{\E}{\mathbb{E}}
\renewcommand{\P}{\mathbb{P}}
\newcommand{\I}{\mathcal{I}}
\renewcommand{\rho}{\varrho}
\renewcommand{\epsilon}{\varepsilon}
\title{On the derivative martingale in a branching random walk}
\author{Dariusz Buraczewski,\footnote{Mathematical Institute, University of
Wroc{\l}aw, Wroc{\l}aw, Poland; e-mail: dbura@math.uni.wroc.pl} \
\ Alexander Iksanov\footnote{Faculty of Computer Science and
Cybernetics, Taras Shevchenko National University of Kyiv, Kyiv,
Ukraine;\  e-mail: iksan@univ.kiev.ua} \ \ and \ \ Bastien
Mallein\footnote{Université Sorbonne Paris Nord, LAGA, UMR 7539, Villetaneuse, France; e-mail: mallein@math.univ-paris13.fr}}
\date{\today}
\begin{document}

\maketitle

\begin{abstract}
We work under the A\"{\i}d\'{e}kon-Chen conditions which ensure that the derivative martingale in a supercritical branching random walk on the line converges almost surely to a nondegenerate nonnegative random variable that we denote by $Z$. It is shown that $\me Z\1_{\{Z\le x\}}=\log x+o(\log x)$ as $x\to\infty$. Also, we provide necessary and sufficient conditions under which $\me Z\1_{\{Z\le x\}}=\log x+{\rm const}+o(1)$ as $x\to\infty$. This more precise asymptotics is a key tool for proving distributional limit theorems which quantify the rate of convergence of the derivative martingale to its limit $Z$. The methodological novelty of the present paper is a three terms representation of a subharmonic function of at most linear growth for a killed centered random walk of finite variance. This yields the aforementioned asymptotics and should also be applicable to other models.
\end{abstract}

\noindent \textbf{Keywords:} branching random walk; derivative
martingale; killed random walk; rate of convergence; subharmonic
function; tail behavior

\noindent \textbf{MSC 2020 subject classifications:} Primary: 60G50, 60J80. Secondary: 60F05, 60G42.

\section{Introduction: a branching random walk and the derivative martingale}\label{intro}

We consider a discrete-time supercritical {\it branching random walk} (BRW) on
the real line $\mr$. The distribution of the branching random walk
is governed by a point process $\cZ:=\sum_{j=1}^N \delta_{X_j}$ on
$\mr$. The number of offspring, $N = \cZ(\R)$, is a random
variable taking values in $\N_0 \cup \{+\infty\} :=
\{0,1,2,\ldots\} \cup \{+\infty\}$.

It is convenient to associate the evolution of BRW with that of some population of individuals. At time $0$, the population
starts with one individual, the ancestor, which resides at the
origin. At time $1$, the ancestor dies and simultaneously places
offspring on the real line with positions given by the points of
the point process $\cZ$. The offspring of the ancestor form the
first generation of the underlying population. At time $2$, each
particle of the first generation dies and has offspring with
positions relative to their parent's position given by an
independent copy of $\cZ$. The individuals produced by the first
generation particles form the second generation of the population,
and so on.

More formally, let $\I=\bigcup_{n\geq 0}\N^n$ be the set of all
possible individuals. The ancestor label is the empty word
$\varnothing$, its position is $S(\varnothing)=0$. On some
probability space let $(\cZ(u))_{u \in \I}$ be a family of
independent copies of the point process $\cZ$. An individual of
the $n$th generation with label $u = u_1\ldots u_n$ and position
$S(u)$ produces a random number $N(u)$ of offspring at time $n+1$.
The offspring of the individual $u$ are placed at random locations
on $\mr$ given by the positions of the point process
\begin{equation*}
\delta_{S(u)} * \cZ(u) = \sum_{j=1}^{N(u)} \delta_{S(u) + X_j(u)},
\end{equation*}
where $\cZ(u) = \sum_{j=1}^{N(u)} \delta_{X_j(u)}$ and $N(u)$ is
the number of points in $\cZ(u)$. The offspring of the individual
$u$ are enumerated by $uj = u_1 \ldots u_n j$, where
$j=1,\ldots,N(u)$ (if $N(u)<\infty$) or $j=1,2,\ldots$ (if
$N(u)=\infty$), and the positions of the offspring are denoted by
$S(uj)$. No assumptions are imposed on the dependence structure of
the random variables $N(u), X_1(u),X_2(u),\ldots$ for fixed
$u\in\I$. The point process of the positions of the $n$th
generation individuals will be denoted by $\cZ_n$ so that
$\cZ_0=\delta_0$ and
\begin{equation*}
\cZ_{n+1} = \sum_{|u|=n} \sum_{j=1}^{N(u)} \delta_{S(u)+X_j(u)}= \sum_{|u|=n}\sum_{j=1}^{N(u)} \delta_{S(uj)},\quad n\in\mn_0.
\end{equation*}
Here and hereafter, $|u|=n$ means that the sum is taken over all
individuals of the $n$th generation rather than over all
$u\in\N^n$. The sequence of point processes $(\cZ_n)_{n \in
\mn_0}$ is then called a branching random walk.
Throughout the article, we assume that $\me N\in (1, \infty]$
(supercriticality) which implies that the population survives with
positive probability. Notice that the sequence of generation sizes
in the BRW forms a Galton-Watson process provided that $N<\infty$
almost surely (a.s.).

In what follows we always assume that
\begin{equation}\label{eq:m(1)=1}
\me \sum_{i=1}^N e^{-X_i}~=~ 1.
\end{equation}
On the other hand, the situation is not excluded that $\me \sum_{i=1}^N e^{-\gamma X_i}=\infty$ for all $\gamma\neq 1$. Put
\begin{equation*}\label{eq:W_n}
W_n ~:=~ \sum_{|u|=n} e^{-S(u)}, \ \ n\in\mn_0
\end{equation*}
and let $\F_n$ be the $\sigma$-algebra generated by the first $n$ generations, that is, $\F_n = \sigma(\cZ(u):\,
|u|<n)$ where $|u|<n$ means that $u \in \N^k$ for some $k<n$. It is a straightforward consequence of \eqref{eq:m(1)=1} and the branching property that the sequence $(W_n, \F_n)_{n \in\N_0}$ is a
nonnegative martingale and thus converges a.s.~to a random
variable that we denote by $W$. This martingale is called \emph{additive} or \emph{Biggins' martingale}.

In addition to \eqref{eq:m(1)=1} we shall assume that
\begin{equation}\label{eqn:boundarycase}
\E \sum_{i=1}^N e^{-X_i}X_i=0
\end{equation}
which means that we are focussed on the so called {\it boundary
case}. Observe that, under \eqref{eqn:boundarycase}, we have $W=0$
a.s. (see, for instance Theorem on p.~218 in \cite{Lyons:1997}).
Putting
\begin{equation*}
Z_n ~:=~ \sum_{|u|=n} e^{-S(u)}S(u), \ \ n\in\mn_0,
\end{equation*}
we obtain another martingale $(Z_n, \F_n)_{n \in\N_0}$ which is
known in the literature as {\it derivative martingale}. Let ${\rm i}:=\sqrt{-1}$ and $\gamma\in\mr$. Differentiating
formally $\sum_{|u|=n} e^{-(1-{\rm i}\gamma)S(u)}/\me \sum_{|u|=n}
e^{-(1-{\rm i}\gamma)S(u)}$ in $\gamma$ and putting $\gamma=0$
yields ${\rm i} Z_n$ which justifies the term `derivative
martingale'.

Put $$\widetilde W_1:= \sum_{i=1}^N e^{-X_i}(X_i)_+.$$ Here and hereafter, we use the standard notation: for
$x\in\mr$, $x_+:=x\vee 0$, $x_-:=(-x)\vee 0$ and $\log_+x:=\log
(x\vee 1)$. It is known (see Proposition A.3 (iii) in \cite{Aidekon:2013})
that the a.s.\ limit $Z:=\lim_{n\to\infty}Z_n$ exists and is
nonnegative and nondegenerate, that is, $\mmp\{Z>0\}>0$ provided
that conditions \eqref{eq:m(1)=1}, \eqref{eqn:boundarycase},
\begin{equation} \label{eqn:variance}
\sigma^2:=\E \sum_{i=1}^N e^{-X_i}X_i^2<\infty
\end{equation}
and
\begin{equation}\label{eqn:integrability}
\E W_1 (\log_+ W_1)^2 + \E \widetilde W_1 \log_+ \widetilde W_1<\infty 
\end{equation}
hold.   Further, according to Theorem 1.1 in \cite{Chen:2015}, under \eqref{eq:m(1)=1},
\eqref{eqn:boundarycase} and \eqref{eqn:variance}, condition
\eqref{eqn:integrability} is also necessary for the existence of
$Z\geq 0$ which is positive with positive probability.

In some of our main results we shall assume that the
distribution of the displacements of the BRW is nonarithmetic, that is, for all $\delta > 0$,
\begin{equation}\label{eq:nonarithmetic}
\P\{\mathcal{Z}(\R\backslash \delta \Z)>0\}>0,
\end{equation}
where $\Z$ is the set of integers.

Conditions \eqref{eqn:variance} and \eqref{eqn:integrability} are standard assumptions which are imposed in articles dealing with the derivative martingale, see, for instance, \cite{Aidekon:2013,Aidekon+Shi:2014,Chen:2015}. The additional assumption \eqref{eq:nonarithmetic} is often needed for proving distributional convergence or convergence of moments, see \cite{Aidekon:2013} for an analysis of the maximal displacement in a BRW. Conditions \eqref{eq:m(1)=1}, \eqref{eqn:boundarycase},
\eqref{eqn:variance}, \eqref{eqn:integrability} are our standing
assumptions throughout the paper, sometimes referred to thereafter
as Condition $\mathcal{S}$. Condition $\mathcal{S}$ in conjunction with the nonarithmeticity assumption \eqref{eq:nonarithmetic} will be called Condition $\mathcal{S}_{{\rm na}}$.

\section{Main results}

\subsection{Tail behavior of the derivative martingale limit}\label{tail}

Our purpose is to provide a two terms asymptotic expansion for
$\me Z\1_{\{Z\leq x\}}$ as $x\to\infty$. While investigating the
relevant literature we have realized that even the first order
asymptotics of that expectation is not given under optimal
assumptions. Thus, we start by filling up this gap.
\begin{theorem} \label{expa0}
Assume that Condition $\mathcal{S}$ holds. Then
\begin{equation}\label{limit2}
\me Z\1_{\{Z\leq x\}}~\sim~ \log x,\quad x\to\infty.
\end{equation}
\end{theorem}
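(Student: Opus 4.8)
The plan is to establish \eqref{limit2} by exploiting the recursive distributional structure of $Z$ coming from the branching property, together with the known relation between $Z$ and the additive martingale at a shifted level. Recall the fixed-point equation
\begin{equation*}
Z \egaldistr \sum_{i=1}^{N} e^{-X_i}\bigl(Z^{(i)} + X_i\bigr),
\end{equation*}
where $(Z^{(i)})_{i\ge 1}$ are i.i.d.\ copies of $Z$, independent of $\cZ$. Iterating this along generations, one gets $Z = \sum_{|u|=n} e^{-S(u)}\bigl(Z^{(u)} + S(u)\bigr)$ in distribution with $Z^{(u)}$ i.i.d.\ copies of $Z$, so $Z$ is built from the point process $\cZ_n$ weighted by $e^{-S(u)}$. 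This is exactly the setting in which the many-to-one lemma reduces computations about $\me f(Z)\,\1_{\text{something}}$ to expectations involving a single centered random walk $(S_n)$ (with increment distribution the $\cZ$-size-biased spine step) killed when it goes below a barrier, and the variance of that walk is $\sigma^2 < \infty$ by \eqref{eqn:variance}.

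First I would recall the first-moment identity $\me Z = \infty$ (since $Z$ is in the domain of a stable-type law with a logarithmic correction — more precisely, $\mmp\{Z > x\} \sim c/x$ as $x\to\infty$, which is the tail asymptotics of Buraczewski--Iksanov--Mallein / Madaule-type results), and use it to see that $\me Z\,\1_{\{Z\le x\}}$ diverges; the question is the \emph{rate}. The cleanest route is: (i) show the upper bound $\me Z\,\1_{\{Z\le x\}}\le (1+o(1))\log x$ by a truncation argument on the genealogical decomposition, controlling the contribution of individuals with $S(u)$ large (handled by \eqref{eqn:boundarycase} and the spinal change of measure) versus $Z^{(u)}$ large (handled by the tail $\mmp\{Z>x\}\sim c/x$, whose integral against $\dd x/x$ produces the $\log$); (ii) show the matching lower bound $\me Z\,\1_{\{Z\le x\}}\ge (1-o(1))\log x$ by keeping only the event that exactly one spine individual carries a large copy $Z^{(u)}\in(\epsilon x, x)$ while all other contributions are $O(1)$, again invoking $\mmp\{Z>x\}\sim c/x$ and summing the single-particle contribution via the many-to-one lemma. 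The renewal/ballot-type estimates for the killed centered walk (e.g.\ $\mmp\{S_j \ge 0,\ 0\le j\le n\}\asymp n^{-1/2}$ and the associated harmonic function $R$) are what convert the per-generation sums into the harmonic series producing $\log x$.

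The main obstacle I anticipate is making the two-sided truncation honest: $Z$ is a sum over \emph{all} individuals of $e^{-S(u)}(Z^{(u)}+S(u))$, so the event $\{Z\le x\}$ couples the spine decomposition with the behavior of the whole tree, and one cannot naively condition on a single $u$ achieving the large value without double-counting or losing a constant. The standard remedy is a second-moment / truncated-second-moment argument on the number of individuals $u$ at generation $n$ with $e^{-S(u)}Z^{(u)}\in (\epsilon x, x)$ and $S(u)\in[a_n, b_n]$ for a slowly growing window, combined with the a.s.\ finiteness of $Z$ (which forces the number of ``large'' contributors to be finite and, after optimizing $n=n(x)$, exactly one with high probability). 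Getting the constant in front of $\log x$ to be precisely $1$ — rather than some $c>0$ — forces us to track the exact constant $c$ in $\mmp\{Z>x\}\sim c/x$ and its cancellation against the normalization $\me\sum e^{-X_i}=1$; I expect this bookkeeping, rather than any single hard estimate, to be the delicate part. (The sharper $\log x + \text{const} + o(1)$ expansion, and the three-term subharmonic representation advertised in the abstract, is what one would deploy afterwards to upgrade $\sim$ to an actual asymptotic expansion, but for Theorem~\ref{expa0} the softer argument above suffices.)
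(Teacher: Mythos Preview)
Your proposal has a genuine circularity problem. You take as input the tail asymptotic $\mmp\{Z>x\}\sim c/x$, citing it as known, and use it in both the upper and lower bounds to produce the $\log x$. But under Condition~$\mathcal{S}$ alone this tail asymptotic is \emph{not} available a priori: in the present paper the relation $\lim_{x\to\infty}x\mmp\{Z>x\}=1$ is formula~\eqref{Z} in Theorem~\ref{expa}, proved only under the \emph{stronger} Conditions $\mathcal{S}_{\rm na}$ and $\mathcal{S}^\ast$, and it is deduced as a \emph{consequence} of the two-term expansion~\eqref{limit3}, which itself refines Theorem~\ref{expa0}. So you are invoking a result that lies strictly downstream of what you want to prove. (And if $\mmp\{Z>x\}\sim c/x$ were already in hand, $\me Z\1_{\{Z\le x\}}\sim c\log x$ would follow by integration by parts in one line, with no genealogy needed.) A smaller point: your fixed-point equation is off---in the boundary case $W_n\to 0$ a.s., so the $X_i$ term drops and one has $Z=\sum_i e^{-X_i}Z^{(i)}$ a.s.\ (Lemma~\ref{lem:fixpoint}), not $\sum_i e^{-X_i}(Z^{(i)}+X_i)$.

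The paper's route is entirely different and uses no a priori tail information on $Z$. One works with the Laplace transform $\phi(s)=\me e^{-sZ}$ via $D(x)=e^x(1-\phi(e^{-x}))$. The fixed-point equation for $\phi$ becomes the Poisson equation $D(x)=\me D(x+\xi)-G(x)$ for the associated centered random-walk step $\xi$, with an explicit nonnegative $G$. The core result (Theorem~\ref{impo}) represents $D$ as $\kappa(x)U(x)$ plus a sublinear remainder, where $U$ is the descending-ladder renewal function; a separate fixed-point argument forces $\kappa\equiv\mu=-\me S_{\tau_1}$, whence $D(x)\sim x$. Theorem~\ref{expa0} then follows from the Tauberian equivalence of $D(x)\sim x$ and $\me Z\1_{\{Z\le x\}}\sim\log x$ (Corollary~8.1.7 in Bingham--Goldie--Teugels). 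The constant $1$ arises from the exact harmonic-function structure of the killed walk, not from any cancellation of tail constants.
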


To formulate our main result, put
\begin{equation}\label{eq:sob4}
W_1^+ :=\sum_{i=1}^N e^{-X_i}\1_{\{X_i\geq 0\}}, \quad
W_1^- :=\sum_{i=1}^N e^{-X_i}\1_{\{X_i<0\}}
\end{equation}
and $X_{\min}:= \min_{1\leq i\le N} X_i$, so that, $X_{\min}$ is the position of the leftmost individual in the first generation. Further, we introduce the following conditions
\begin{equation}\label{eq:2.4}
\E W^+_1 (\log_+ W^+_1)^3+ \E\widetilde W_1 (\log_+ \widetilde W_1)^2<\infty;
\end{equation}
\begin{equation}\label{eq:2.5}
\E W_1^-(\log W_1^-)^3\1_{\big\{\sum_{i=1}^N (1+X_i - X_{\min}) e^{X_{\min} - X_i}\1_{\{X_i<0\}} >C_0 \big\}}<\infty \quad
\mbox{for some } C_0>0
\end{equation}
and
\begin{equation}\label{mom3}
\me \sum_{i=1}^N e^{-X_i}(X_i)_-^3<\infty.
\end{equation}
In what follows, we refer to the union of \eqref{eq:2.4}, \eqref{eq:2.5} and \eqref{mom3} as Condition $\mathcal{S}^\ast$.
\begin{theorem}\label{expa}
Under Condition $\mathcal{S}_{\rm na}$, we have
\begin{equation}\label{limit3}
\me Z\1_{\{Z\leq x\}}=\log x+c+o(1),\quad x\to\infty
\end{equation}
for a finite constant $c$ if, and only if, Condition $\mathcal{S}^\ast$ holds. 
Formula \eqref{limit3} particularly entails
\begin{equation}\label{Z}
\lim_{x\to\infty}x\mmp\{Z>x\}=1.
\end{equation}
\end{theorem}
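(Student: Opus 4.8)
The plan is to establish Theorem~\ref{expa} by connecting the tail behaviour of $Z$ to a subharmonic-function analysis for a killed centered random walk, exploiting the recursive distributional identity satisfied by $Z$. First, I would record the fixed-point equation for $Z$: conditionally on the first generation, $Z \egaldistr \sum_{i=1}^N e^{-X_i}\big(Z^{(i)} + X_i Z'^{(i)} / Z^{(i)}\big)$ — more precisely, writing $Z = \sum_{|u|=1} e^{-S(u)}\big(S(u) W^{(u)} + Z^{(u)}\big)$ where $(W^{(u)}, Z^{(u)})$ are i.i.d.\ copies of $(W,Z)$ attached to the children, and $W=0$ a.s.\ in the boundary case, this reduces to $Z \egaldistr \sum_{i=1}^N e^{-X_i} Z^{(i)}$ only in distribution of the limit but with the derivative correction appearing through the martingale structure. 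The key object is the function $\varphi(x) := \me\big[Z\1_{\{Z\le e^{x}\}}\big]$ or a closely related truncated moment; the claim \eqref{limit3} says $\varphi$ is asymptotically linear with an explicit slope, and Condition $\mathcal{S}^\ast$ is exactly the integrability needed for the constant term to exist.

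The heart of the argument — and the ``methodological novelty'' advertised in the abstract — is the three-term representation of a subharmonic function of at most linear growth for a killed centered random walk $(S_n)$ of finite variance $\sigma^2$: if $h \ge 0$ satisfies $h(x) \ge \me[h(x+S_1)\1_{\{x+S_1 > 0\}}]$ (subharmonicity for the walk killed upon leaving $(0,\infty)$) and $h(x) = O(x)$, then $h(x) = a x + b + R(x)$ where $R(x) \to 0$ (under nonarithmeticity) and $a,b$ are identified by renewal-theoretic quantities, with the remainder controlled by the harmonic function $V$ of the killed walk (the renewal function of the ascending ladder height process) and a potential-type correction. I would apply this to $h = \varphi$ after verifying that the truncated moment $\me[Z\1_{\{Z\le e^x\}}]$ is subharmonic for the appropriate killed walk built from the many-to-one change of measure (the walk $(S(u))$ along a spine becomes a centered random walk of variance $\sigma^2$ under the measure associated with $W_n$); the linear growth comes from Theorem~\ref{expa0}, which already gives $\varphi(x) \sim x$, forcing $a=1$. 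The nonarithmeticity \eqref{eq:nonarithmetic} enters precisely to make the renewal-theoretic remainder vanish rather than oscillate, which is why it is assumed in Theorem~\ref{expa} but not in Theorem~\ref{expa0}.

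For the equivalence with Condition $\mathcal{S}^\ast$, I would show that the error term $R(x) = \varphi(x) - x - c$ tends to $0$ if and only if a certain series, expressing the contribution of ``bad'' first-generation configurations to the truncated moment, converges; the three conditions \eqref{eq:2.4}, \eqref{eq:2.5}, \eqref{mom3} are the translation of this convergence into displacement-distribution language. Roughly: \eqref{mom3} handles the Gaussian-type correction from the central limit behaviour of the spine walk (third moment of the negative part, matching the $\sigma^2$ finiteness refined by one order); \eqref{eq:2.4} controls the right tail / large-offspring contribution through the $W_1^+$ and $\widetilde W_1$ moments with an extra logarithmic power compared to Condition $\mathcal{S}$; and \eqref{eq:2.5} is the delicate ``leftmost particle'' condition governing configurations where much mass sits far to the left, which feeds into the killed-walk potential. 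The necessity direction requires reversing these implications, i.e.\ showing that if any of the three fails then $R(x) \not\to 0$ — typically by exhibiting a lower bound on $\varphi(x) - x$ along a subsequence using a single favourable branching event (a second-moment or Paley--Zygmund argument on $Z_n$ truncated appropriately). Finally, \eqref{Z} follows from \eqref{limit3} by an Abelian/Tauberian step: $\me[Z\1_{\{Z\le x\}}]$ having the form $\log x + c + o(1)$ means $\int_1^x \mmp\{Z>t\}\,\dd t \sim \log x$ with a vanishing correction, and combined with monotonicity of $t\mapsto t\mmp\{Z>t\}$-type quantities and the known fact that $Z$ has no atom at a problematic scale, one extracts $x\mmp\{Z>x\}\to 1$; more directly, $\me[Z\1_{\{Z\le x\}}] = \me[Z] - \me[Z\1_{\{Z>x\}}]$ is not available since $\me Z = \infty$, so instead one writes $\me[Z\1_{\{Z\le x\}}] = \int_0^x \mmp\{Z\1_{\{Z\le x\}} > t\}\,\dd t$ and differentiates the asymptotic relation, using that the left-hand side is concave-like in $\log x$.

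The main obstacle I anticipate is proving the three-term representation itself with a remainder that genuinely vanishes: establishing subharmonicity of the truncated moment for the correct killed walk is technical but routine, and identifying the slope is immediate from Theorem~\ref{expa0}, but pinning down the constant $b$ and showing $R(x)\to 0$ requires a careful decomposition of $\varphi$ against the harmonic function $V$ of the killed walk, a renewal-type equation for the residual, and an application of the key renewal theorem in the nonarithmetic case — all while tracking which integrability hypotheses on $\cZ$ are actually consumed, since the whole point is to get the equivalence with Condition $\mathcal{S}^\ast$ and not merely a sufficient condition. A secondary difficulty is the necessity direction, where one must produce sharp lower bounds showing each of \eqref{eq:2.4}, \eqref{eq:2.5}, \eqref{mom3} is indispensable.
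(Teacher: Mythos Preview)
Your overall strategy --- apply a three-term representation for subharmonic functions of at most linear growth for a killed centered random walk, identify the slope via Theorem~\ref{expa0}, and then show the constant term exists iff Condition $\mathcal{S}^\ast$ holds --- matches the paper's architecture. But there is a genuine gap in your choice of the function to which this machinery is applied.

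You propose to take $h(x)=\varphi(x)=\E[Z\1_{\{Z\le e^x\}}]$ and verify that it is subharmonic for the killed spine walk. This verification will fail. The fixed-point identity $Z=\sum_{|u|=1}e^{-S(u)}Z(u)$ a.s.\ does not translate into a Poisson equation for $\varphi$, because the truncation event $\{Z\le e^x\}=\{\sum_j e^{-X_j}Z^{(j)}\le e^x\}$ couples all first-generation subtrees and does not factorize. There is no relation of the form $\varphi(x)=\E\varphi(x+\xi)-g(x)$ with a tractable (let alone nonnegative) $g$. The paper circumvents this by working instead with the Laplace transform: setting $D(x)=e^x(1-\phi(e^{-x}))$ with $\phi(s)=\E e^{-sZ}$, the multiplicative fixed-point equation $\phi(s)=\E\prod_i\phi(se^{-X_i})$ yields an exact Poisson equation $D(x)=\E D(x+\xi)-G(x)$ with $G\ge 0$ explicit; it is to $D$, not $\varphi$, that the three-term representation is applied, giving $D(x)=\mu U(x)+\E_x D(S_\tau)-\E_x\sum_{k=0}^{\tau-1}G(S_k)$. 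The equivalence with Condition $\mathcal{S}^\ast$ is then reduced to whether $\int_0^\infty yG(y)\,\dd y<\infty$ (this gives \eqref{eq:2.4} and \eqref{eq:2.5}) together with whether $\mu U(x)-x$ has a limit (this gives \eqref{mom3}, via a second-moment condition on the descending ladder height). The passage from the Laplace-transform statement $D(x)=x+c_2+o(1)$ back to the truncated-moment statement \eqref{limit3}, and from there to \eqref{Z}, is done by de Haan--class Tauberian theorems, not by a direct analysis of $\varphi$.

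So the missing idea is: work with the Laplace transform to get a genuine Poisson equation, then transfer to truncated moments via Tauberian arguments. Your sketch of the Tauberian step for \eqref{Z} is in the right spirit but also needs repair: one uses the equivalence of $\lim_t t\mmp\{Z>t\}=1$ with $D(x+y)-D(x)\to y$ (de Haan class for the Laplace transform), not a differentiation of $\varphi$.
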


We proceed with a number of remarks.
\begin{remark}
1) We start by giving one particular example in which condition \eqref{eq:2.5} holds true. Assume that the number of the first generation individuals positioned on the negative halfline is bounded a.s., that is , $\sum_{i=1}^N \1_{\{X_i<0\}}\leq C_0$ a.s.\ for some $C_0>0$. Then $\sum_{i=1}^N (1+X_i - X_{\min}) e^{X_{\min} - X_i}\1_{\{X_i<0\}}\leq C_0$ a.s.\ which entails \eqref{eq:2.5}. Of course, if $\sum_{i=1}^N \1_{\{X_i<0\}}=0$ a.s., then \eqref{eq:2.5} holds trivially.

\noindent 2) A sufficient condition for \eqref{limit3} is
\begin{equation*}
\E W_1 (\log_+ W_1)^3+ \E \widetilde W_1 (\log_+ \widetilde W_1)^2< \infty.
\end{equation*}
Observe that it has a form similar to \eqref{eqn:integrability}.

\noindent 3) In a frequently encountered and mathematically tractable setting, the random variables $X_1$, $X_2,\ldots$ (displacements) are independent and identically distributed and also independent of $N$ (the number of offspring). Direct calculation reveals that Conditions $\S_{\rm na}$ and $\mathcal{S}^*$ are ensured by
\begin{equation*}
\E N\in (1,\infty), \quad \E N (\log_+ N)^2 < \infty;
\end{equation*}
\begin{equation*}
\E e^{-X_1}=(\E N)^{-1},\quad \E e^{-X_1}X_1 = 0,\quad \E e^{-X_1}X_1^2 < \infty;
\end{equation*}
\begin{equation*}
\text{the distribution of }~ X_1~ \text{is nonarithmetic}
\end{equation*}
and
\begin{equation}\label{Mail}
\E N (\log_+N)^3 < \infty,\quad \E e^{-X_1}(X_1)_-^3 < \infty,
\end{equation}
respectively. Alternatively, but a bit informally, this can be seen by identifying the $n$th generation of the BRW described above with the $(n+1)$st generation of a BRW driven by a point process $\cZ^\ast:=N\delta_{X_1}$ (the correspondence is set by replacing the position of each parent in the latter BRW with the position of its children). Thus, neglecting the numbering of generations one may replace, for instance, the condition $\me W_1(\log_+ W_1)^2<\infty$ which is a part of \eqref{eqn:integrability} with $\me Ne^{-X_1}(\log_+ Ne^{-X_1})^2<\infty$. The latter is equivalent to $\me N(\log_+ N)^2<\infty$ and $\me e^{-X_1}(X_1)_-^2<\infty$.
\end{remark}

\subsection{The rate of convergence of the derivative martingale to its limit}\label{rate}

Recall that the characteristic function of a general nondegenerate $1$-stable distribution $\nu$ takes the form $$t\mapsto \exp({\rm i}at-b|t|(1+{\rm i}\beta {\rm sgn}\,t (2/\pi)\log |t|)),\quad t\in\mr,$$ where $a\in\mr$, $b>0$ and $\beta\in\mr$, $|\beta|\leq 1$, and that $\nu$ is uniquely determined by the generating triple $(a,b,\beta)$. The L\'{e}vy spectral function $M^\ast$ of $\nu$ is given by $M^\ast(x)=b_1|x|^{-1}$ for $x<0$ and $M^\ast(x)=-b_2 x^{-1}$ for $x>0$, where $b_1, b_2\geq 0$ satisfy $b=(b_1+b_2)\pi/2$ and $\beta=(b_2-b_1)/(b_2+b_1)$. When $b_1=0$, $b_2>0$, so that $\beta=1$ the distribution $\nu$ is called {\it spectrally positive}.

As an application of Theorem \ref{expa} which is a result on the tail behavior of $Z$ we state a one-dimensional limit theorem. Set $\F_\infty:=\sigma(\F_n: n \in \mn_0)$ and note that $Z$, the a.s.\ limit of $Z_n$, is an $\F_\infty$-measurable random variable. As usual, $\overset{\mmp}{\to}$ and $\dod$ will denote convergence in probability and in distribution, respectively.
\begin{theorem}\label{main2}
Assume that Conditions $\mathcal{S}_{\rm na}$ and $\mathcal{S}^\ast$ hold.
Then, for every bounded continuous function $f:\mr \to \mr$,
\begin{equation}\label{limit_main01}
\me\big(f(n^{1/2}(Z-Z_n+(2^{-1}\log n) W_n)\big)\big |\F_n\big)~\overset{\mmp}{\to}~ \me(f(ZL)|\F_\infty),\quad n\to\infty,
\end{equation}
which particularly entails
\begin{equation}\label{limit_inter2}
n^{1/2}(Z-Z_n+(2^{-1}\log n) W_n)~\dod~ ZL,\quad n\to\infty.
\end{equation}
Here, a random variable $L$ is assumed independent of $\F_\infty$ and has a $1$-stable distribution with the generating triple $((c+1-\gamma)(2/(\pi\sigma^2))^{1/2}, (\pi/(2\sigma^2))^{1/2}, 1)$, $\gamma$ is the Euler-Mascheroni constant, and  $c$ is the same constant as in \eqref{limit3}. Thus, the distribution of $L$ is spectrally positive with characteristic function
$$\me e^{{\rm i}tL}=\exp\big({\rm
i}(c+1-\gamma)(2/(\pi\sigma^2))^{1/2}t-(\pi/(2\sigma^2))^{1/2}|t|(1+{\rm i}\,{\rm
sgn}\,(t)(2/\pi)\log |t|)\big),\quad t\in\mr.$$
\end{theorem}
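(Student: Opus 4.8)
The plan is to realize the difference $Z - Z_n$ as a sum over the individuals alive at time $n$ of i.i.d.\ copies (given $\F_n$) of $Z - $ something, and then to apply a triangular-array limit theorem for sums of independent random variables in the domain of attraction of a $1$-stable law. Concretely, by the branching property, conditionally on $\F_n$ one has the representation
\begin{equation*}
Z - Z_n = \sum_{|u|=n} e^{-S(u)}\big( Z^{(u)} - S(u)\, W^{(u)} \big) + \sum_{|u|=n} e^{-S(u)} S(u)\big(W^{(u)} - 1\big),
\end{equation*}
where $(Z^{(u)}, W^{(u)})_{|u|=n}$ are i.i.d.\ copies of $(Z, W)=(Z,0)$-type limits attached to the subtrees rooted at $u$; more precisely $Z^{(u)}$ is the derivative-martingale limit of the subtree and $W^{(u)}$ its (a.s.\ zero) additive-martingale limit, so the second sum vanishes and we are left with $Z - Z_n = \sum_{|u|=n} e^{-S(u)} Z^{(u)}$. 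Writing $Z_n + $ the centering $2^{-1}(\log n) W_n = \sum_{|u|=n} e^{-S(u)}(S(u) + 2^{-1}\log n)$, the quantity of interest becomes
\begin{equation*}
n^{1/2}\big(Z - Z_n + (2^{-1}\log n) W_n\big) = \sum_{|u|=n} e^{-S(u)}\, n^{1/2}\big(Z^{(u)} + S(u) + 2^{-1}\log n\big)
\end{equation*}
Wait—the centering needs to be organized so that each summand is, after the $e^{-S(u)}$ weight, a centered-at-an-appropriate-level copy; I would instead group terms as $n^{1/2} e^{-S(u)}\big(Z^{(u)} - (- S(u)) + 2^{-1}\log n\big)$ and absorb $-S(u)$ into the law of $Z^{(u)}$ by noting that, conditioned on $\F_n$, the relevant object is $Z^{(u)}$ recentered by the subtree's own structure. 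The upshot is a sum of conditionally independent terms $\xi_{n,u} := n^{1/2} e^{-S(u)}(Z^{(u)} + S(u) + 2^{-1}\log n)$, and I must show this triangular array converges to $ZL$.

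The key analytic input is Theorem \ref{expa}: since $\me Z\1_{\{Z\le x\}} = \log x + c + o(1)$ and $x\mmp\{Z>x\}\to 1$, the random variable $Z$ has a slowly-varying-corrected Cauchy-type tail, which is exactly the tail that places $e^{-a}(Z + a)$, summed appropriately over levels $a = S(u) + 2^{-1}\log n$, into the domain of attraction of a spectrally positive $1$-stable law. The standard route is to verify the classical sufficient conditions for convergence of sums of independent infinitesimal random variables to an infinitely divisible law (e.g.\ via Theorem in Gnedenko–Kolmogorov / Petrov): (i) the Lévy-type measure of the array $\sum_u \mmp\{\xi_{n,u} \in \cdot\}$ converges vaguely on $\mr\setminus\{0\}$ to $M^\ast$ with $b_1 = 0$ and $b_2 = (\pi/(2\sigma^2))^{1/2}\cdot(2/\pi) = (2/(\pi\sigma^2))^{1/2}$—wait, I should read off $b_2$ from $b = (b_1+b_2)\pi/2$, giving $b_2 = (2/\pi)(\pi/(2\sigma^2))^{1/2}$; (ii) the truncated variances vanish; (iii) the truncated first moments converge to the drift $(c+1-\gamma)(2/(\pi\sigma^2))^{1/2}$. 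Here the constants $c$ (from \eqref{limit3}), the $+1$ (from $x\mmp\{Z>x\}\to 1$), and $-\gamma$ (Euler–Mascheroni, arising from $\int_0^\infty (e^{-t} - \1_{\{t\le 1\}})\,\dd t/t = -\gamma$ when integrating the linear term $a = -S(u)$ against the weight $e^{-S(u)}$ over the level structure) must all be extracted by careful bookkeeping; this is where Theorem \ref{expa}'s second-order term is indispensable, since the $+c$ and $-\gamma$ would be invisible to a first-order tail estimate.

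The remaining ingredient is the behavior of the point process $\cZ_n = \sum_{|u|=n}\delta_{S(u)}$ weighted by $e^{-S(u)}$: one needs that, as $n\to\infty$, the random measure $\sum_{|u|=n} e^{-S(u)}\delta_{(S(u) + 2^{-1}\log n)/\text{(scale)}}$ (or rather the functionals of it appearing in (i)–(iii)) converge, after conditioning on $\F_\infty$, in a way governed by the total mass $Z$. This is precisely the content of the known convergence results for the derivative martingale and associated spinal decompositions (Aïdékon–Shi type results on $n^{1/2}W_n \to (2/(\pi\sigma^2))^{1/2} Z$ and the convergence of $\sum_{|u|=n} e^{-S(u)} f(S(u))$ for suitable $f$); I would cite and invoke these to pass from the conditional-on-$\F_n$ array statement to the conditional-on-$\F_\infty$ limit $\me(f(ZL)\mid\F_\infty)$, using a standard argument that the $\F_n$-conditional characteristic functions converge in probability to the $\F_\infty$-conditional one (dominated convergence for conditional expectations plus the martingale convergence $\me(\cdot\mid\F_n)\to\me(\cdot\mid\F_\infty)$). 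The statement \eqref{limit_inter2} then follows from \eqref{limit_main01} by taking $\me$ of both sides and using bounded convergence.

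\textbf{Main obstacle.} The hard part will be step (iii): pinning down the drift constant, i.e.\ showing that the truncated-first-moment sums converge to exactly $(c+1-\gamma)(2/(\pi\sigma^2))^{1/2}$. This requires combining the precise two-term asymptotics of $\me Z\1_{\{Z\le x\}}$ from Theorem \ref{expa} with a uniform (in the level $a$) control of $\me(e^{-a}(Z+a)\1_{\{e^{-a}(Z+a)\le 1\}})$ and its integral against the intensity of the weighted branching measure, and then identifying the resulting constant—where the $-\gamma$ emerges from an exponential-integral computation. Controlling the error terms uniformly over the (random, spread-out) configuration $\{S(u):|u|=n\}$, so that the $o(1)$ in \eqref{limit3} does not accumulate when summed over the $\asymp n^{1/2}$-scale spread of $\F_n$-particles, is the delicate technical point; this is where Condition $\mathcal{S}^\ast$ (the extra log-moment and $\me\sum e^{-X_i}(X_i)_-^3<\infty$ assumptions) will be used, to guarantee enough regularity and integrability for these uniform estimates.
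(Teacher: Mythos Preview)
Your approach is essentially the same as the paper's: decompose $Z=\sum_{|u|=n}e^{-S(u)}Z(u)$ via Lemma~\ref{lem:fixpoint}, treat the resulting triangular array of $\F_n$-conditionally independent summands, and verify the Gnedenko--Kolmogorov conditions using the tail asymptotics \eqref{Z} and the Seneta--Heyde scaling $n^{1/2}W_n\overset{\mmp}{\to}(2/(\pi\sigma^2))^{1/2}Z$ (Lemma~\ref{aux_limit}). A few corrections and refinements: (i) your representation has a sign slip---the summands are $n^{1/2}e^{-S(u)}\big(Z(u)-(S(u)-2^{-1}\log n)\big)$, not $+S(u)$; (ii) the paper first centers each summand by the truncated mean $H(e^{S(u)}n^{-1/2})$ rather than by $S(u)-2^{-1}\log n$ directly, which makes the drift computation cleaner (the truncated-first-moment sum is then exactly $H(a(\tau,u,k))-H(e^{S(u)}n_k^{-1/2})\to\log\tau$ via \eqref{x6}), and only afterwards replaces $H(x)$ by $\log x+c$ using \eqref{limit3} to pick up the shift $(2/(\pi\sigma^2))^{1/2}c$; (iii) the constant $1-\gamma$ does not arise from your exponential integral but from the Kolmogorov canonical form, specifically $\int_0^\infty\big(\sin x/x^2-1/(x(1+x^2))\big)\,\dd x=1-\gamma$; (iv) Condition~$\mathcal{S}^\ast$ is used solely to establish \eqref{limit3}---once that is in hand, the uniform control you worry about is supplied by $\inf_{|u|=n}S(u)-2^{-1}\log n\overset{\mmp}{\to}+\infty$ (Lemma~\ref{aux_limit}(b)), which forces all $e^{S(u)}n^{-1/2}\to\infty$ uniformly and lets the $o(1)$ in \eqref{limit3} pass through.
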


Plainly, Theorem \ref{main2} is a result on the rate of convergence of the derivative martingale to its a.s.\ limit.
\begin{remark}
Mimicking the proof of Theorem \ref{main2} one can also show that, for every bounded continuous function $f:\mr \to \mr$, on the set
of survival $\{\cZ_{n}(\mr)>0\quad\text{for all}~n\in\mn\}$,
\begin{equation}\label{limit_main00}
\me\Big(f\Big(\frac{n^{1/2}}{Z_n}(Z-Z_n+(2^{-1}\log n) W_n)\Big)\Big)\Big|\F_n\Big)~\overset{\mmp}{\to}~ \E f(L),\quad n\to\infty.
\end{equation}
As a consequence, a counterpart of \eqref{limit_inter2} holds, namely, conditionally on the survival,
\begin{equation}\label{limit_main22}
\frac{n^{1/2}}{Z_n}(Z-Z_n+(2^{-1}\log n) W_n)~\dod~
L,\quad n\to\infty.
\end{equation}
We omit further details.
\end{remark}

The rest of the article is structured as follows. In Section \ref{appr} we explain our approach which is based on a novel look at a Poisson equation on the halfline. Also in the section is a brief survey of some earlier papers dealing with a general Poisson equation. In Section \ref{comp} we compare our results to similar ones available in the literature. In Section \ref{nota} we introduce a standard random walk associated with the BRW and lay down the frequently used notation. In Section \ref{subharm} which is the core of our work we prove a representation of subharmonic functions of at most linear growth for killed centered standard random walks with finite variance. As a corollary, we show that actually such functions grow linearly. While Theorems \ref{expa0} and \ref{expa} are proved in Section \ref{proofs1}, Theorem \ref{main2} is proved in Section \ref{proofs2}. The appendix collects several Abelian and Tauberian theorems related to the de Haan class of slowly varying functions and some auxiliary facts about standard random walks, Lebesgue integrable and directly Riemann integrable functions.

\section{Discussion}
\subsection{Our approach}\label{appr}

To determine the tail behavior of $Z$ we work with its Laplace transform. Formula \eqref{eqn:harmonicD} written in terms of this Laplace transform is an instance of a Poisson equation. In view of this, our principal purpose is to develop an approach towards understanding the asymptotics of solutions to a {\it general} Poisson equation
\begin{equation}\label{eq:Poisson equation}
K(x) =\me K(x+\eta)-L(x),\quad x\in\mr,
\end{equation}
where $\eta$ is a random variable and $L:\mr\to\mr$ is a given function. Especially, we are interested in situations in which $K$ exhibits a linear growth.

When $\me\eta\neq 0$ and $\me |\eta|<\infty$, \eqref{eq:Poisson equation} is called {\it renewal equation}. In this case, $$K(x)=-\int_\mr L(x+y)U^\ast({\rm d}y),\quad x\in\mr,$$ where, with $\eta_1$, $\eta_2,\ldots$ being independent copies of $\eta$, $U^\ast$ is the (locally finite) renewal measure defined by $U^\ast({\rm d}y)=\sum_{k\geq 0}\mmp\{\eta_1+\ldots+\eta_k\in {\rm d}y\}$. Furthermore, the asymptotics of $K$ is well-understood and driven by the key renewal theorem in which case $$\lim_{x\to \pm \infty}K(x)=-(\me \eta)^{-1}\int_\mr L(y){\rm d}y$$ (depending on the sign of $\me\eta$ the limit is as $x\to-\infty$ or $x\to+\infty$) or its relatives, see, for instance, Section 6.2 in \cite{Iksanov:2016}.

In this article our focus is on the centered case $\me\eta=0$ in which the renewal measure (potential) $U^\ast$ is not locally finite. This makes things more complicated, and one has to find a proper replacement for $U^\ast$. This task was accomplished by Spitzer in Section 28 of \cite{Spitzer:2001} for centered random walks on integers and then by Port and Stone in \cite{Port+Stone:1969} in a general setting. Assuming that the distribution of $\eta$ is spread-out (that is, some convolution power of it has a nontrivial absolutely continuous component) and that $L$ is a bounded function of compact support these authors proposed a limiting procedure yielding the potential kernel $A$ defined by
\begin{equation*}
AL(x): =\int_\mr L(x-y)a(y){\rm d}y-\int_\mr L(x-y)\rho({\rm d}y)+ b\int_\mr L(y){\rm d}y-L(x),\quad x\in\mr.
\end{equation*}
Here, $a:\mr\to\mr$ is a continuous function satisfying $\lim_{x\to \pm \infty}
(a(x-y) - a(x)) = \mp {\tt s}^{-2}y$, where ${\tt s}^2 = \E\eta^2$; $\rho$ is a finite measure and $b$ is a constant. As a consequence, it was shown in Theorem 10.3 of \cite{Port+Stone:1969} that any positive (or more generally
bounded from below) solution to \eqref{eq:Poisson equation} is of the form
\begin{equation}\label{eq:Poisson equation solution}
K(x) = AL(x) + \Big(c{\tt s}^{-2}\int_\mr L(y){\rm d}y\Big) x+ d,\quad x\in\mr,
\end{equation}
where $d$ is any constant and $|c| \le 1$. It is known that either $K(x)$ converges to a positive constant or behaves linearly as $x\to\infty$ depending on whether $\int_\mr L(y){\rm d}y$ is zero or not. There is an extension of the results discussed above to the case where $L$ is not necessarily compactly supported and rather satisfies an integral condition, see Theorem
3.1 in \cite{Brofferio+Buraczewski+Damek:2012} or Theorem 3.2 in \cite{Buraczewski:2007}.

While investigating a particular Poisson equation related to a smoothing transform (see the beginning of Section \ref{decomp} for the definition and some
more details) Durrett and Liggett in \cite{Durrett+Liggett:1983} were concerned
with the asymptotic behavior of a given solution to
\eqref{eq:Poisson equation} rather than in description of the set
of all solutions. These authors invented a novel approach based on
Feller's duality principle (Lemma 1 on p.~609 in
\cite{Feller:1971}). This enabled them to employ the key renewal
theorem for describing the asymptotic behavior of the given
solution. In a more general setting similar ideas were exploited
by Liu in \cite{Liu:1998}.

The main methodological achievement of the present work is an
explicit formula, other than \eqref{eq:Poisson equation solution}, for solutions of at most linear growth to a
Poisson equation on the halfline. Among other things this provides a way to easily obtain the
precise asymptotic behavior of those solutions. Roughly speaking,
the idea is as follows. We are interested in the asymptotics of a
solution $f$ at $\infty$, so that the values $f(x)$ for $x\leq 0$
should play no role. Thus, we regard $f$ as a solution to a
Dirichlet problem: given the values of $f$ on $(-\infty,0]$ (which
can be thought of as {\it boundary values}) we intend to recover
$f$ on $(0,\infty)$ which is nothing else but a subharmonic
function of at most linear growth for a recurrent standard random
walk killed upon entering $(-\infty, 0]$.

\subsection{Comparison to earlier literature} \label{comp}

\noindent {\sc Comments on Section \ref{tail}}. Theorem \ref{expa0} provides an improvement over Theorem 2.18 in
\cite{Durrett+Liggett:1983} and Theorem 4.2 in \cite{Liu:1998}
obtained for $Z$ being a fixed point of the smoothing transform.
In the former, relation \eqref{limit2} is proved in
the situation that $N\geq 2$ is a deterministic integer, that
conditions \eqref{eq:m(1)=1}, \eqref{eqn:boundarycase} and
\eqref{eq:nonarithmetic} hold, and that $\me W_1^\gamma<\infty$
for some $\gamma>1$. In the latter, while $N$ is random with $\me
N>1$, the other conditions ensuring \eqref{limit2} are comparable
to those in \cite{Durrett+Liggett:1983}.

Theorem \ref{expa} strengthens several results on the tail
behavior of $Z$ available in the literature. The best previously
known sufficient conditions for \eqref{Z} that we are aware of are
in Theorem 1.4 of \cite{Madaule:2018}. In addition to Condition
$\mathcal{S}_{\rm na}$ the author requires $$\me\Big(\sum_{i=1}^N
e^{-X_i}+\sum_{i=1}^N e^{-X_i}(X_i)_+\Big)\log_+\Big(\sum_{i=1}^N e^{-X_i}+\log\sum_{i=1}^N
e^{-X_i}(X_i)_+\Big)^5<\infty.$$ To be more
precise, in the last cited theorem it is claimed that
\begin{equation*}
\lim_{x\to\infty}x\mmp\{Z>x\}=b,
\end{equation*}
where $b$ is the product of two positive constants expressed in
terms of the minimal position of BRW's individuals over the whole
population and the random variable $Z$. Our Theorem \ref{expa}
reveals that $b$ is actually equal to one, thereby giving an explicit relationship between these two constants. Under stronger moment
assumptions a relation like \eqref{Z} was also proved in Theorem
1.2 of \cite{Buraczewski:2009} for $Z$ being a fixed point of the
smoothing transform. Last but not least, a counterpart of \eqref{limit3} in the context of branching Brownian
motion was proved in Proposition 4.1 of \cite{Maillard:2012}. Our condition \eqref{Mail} is reminiscent of Maillard's condition.

\noindent {\sc Comments on Section \ref{rate}}. Limit theorems
providing a rate of convergence have been and still are quite
popular in the area of branching processes. Surveys of the
relevant literature can be found in
\cite{Iksanov+Kolesko+Meiners:2020} and \cite{Maillard+Pain:2019}. The latter article discusses, among others, limit theorems for some models of statistical mechanics. A large selection of rate of convergence results for more complicated branching processes, including branching diffusions and superprocesses, can be traced via the references given in \cite{Ren etal:2019+}.

Theorem \ref{main2} is a counterpart of Proposition 2.1 in \cite{Maillard+Pain:2019} obtained for the derivative martingale which corresponds to a branching Brownian motion. Observing the martingale at nonnegative integer times only yields a particular version of $(Z_n,\F_n)_{n\in\mn_0}$ investigated here, with $\sigma^2=1$. According to Theorem \ref{main2}, the random variable $L$ appearing in \eqref{limit_inter2} has a $1$-stable distribution with the generating triple
$((c+1-\gamma)(2/\pi)^{1/2}, (\pi/2)^{1/2}, 1)$, whereas according to Proposition 2.1 in \cite{Maillard+Pain:2019} the generating triple is $((c-\gamma)(2/\pi)^{1/2}, (\pi/2)^{1/2}, 1)$, that is, $1$ is lost. The error in \cite{Maillard+Pain:2019} is caused by missing the term $x\mmp\{Z>x\}$ which converges to $1$ as $x\to\infty$ in the equality $$\int_0^x \mmp\{Z>y\}{\rm d}y=\me Z\1_{\{Z\leq x\}}+x\mmp\{Z>x\},\quad x>0$$ (see formula (1.9) and Lemma C.1 in \cite{Maillard+Pain:2019}).

\section{A standard random walk associated with BRW}\label{nota}

Under \eqref{eq:m(1)=1}, denote by $\xi$ a random variable with
distribution given by
\begin{equation}\label{xidef}
\me t(\xi)=\me \sum_{i=1}^N e^{-X_i}t(X_i)
\end{equation}
for any measurable bounded function $t:\mr\to\mr^+$, where $\mr^+:=[0,\infty)$. Note that \eqref{xidef} also holds for real-valued $t$ whenever the left- or right-hand side of \eqref{xidef} is well-defined, possibly infinite.

Observe that Condition $\mathcal{S}$ implies that $\me \xi=0$ and $\me\xi^2=\sigma^2<\infty$. Further, we stress that supercriticality in combination with \eqref{eq:m(1)=1} guarantees that $\mmp\{\xi=0\}<1$ (taken together with $\me\xi=0$ the latter means that the distribution of $\xi$ is nondegenerate, whence $\sigma^2>0$). Indeed, assuming the contrary
$$1=\mmp\{\xi=0\}=\me\sum_{i=1}^N e^{-X_i}\1_{\{X_i=0\}}$$ we
conclude that $N=1$ and $X_1=0$ a.s., a contradiction to supercriticality. Additionally, note that Condition $\mathcal{S}_{\rm na}$ implies
that the distribution of $\xi$ is nonarithmetic, that is, concentrated on $d\mathbb{Z}$ for no $d> 0$.

We denote by $S:=(S_n)_{n\in\mn_0}$ a standard random walk defined
by $S_n-S_0:=\xi_1+\ldots+\xi_n$ for $n\in\mn$, where $\xi_1$,
$\xi_2,\ldots$ are independent copies of $\xi$ which are also
independent of $S_0$. For $x\in\mr$, we denote by $\mmp_x$ the
distribution of the random walk $(S_n)_{n\in\mn_0}$ when $S_0=x$
a.s. As usual, we write $\mmp$ for $\mmp_0$.

It is a well-known fact that the behavior of BRW is driven, among
others, by the random walk $S$.
A classical example of this connection is the so-called {\it
many-to-one lemma} which can be traced back at least to Kahane and
Peyrière \cite{Kahane+Peyriere:1976,Peyriere:1974}. We quote it
from Theorem 1.1 in \cite{Shi:2015}.
\begin{lemma}[Many-to-one]\label{many}
For each $n\in\mn$ and a measurable bounded function $t:\mr^n \to\mr^+$,
\[
\E\sum_{|u|=n}e^{-S(u)}t(S(u_1),\ldots, S(u_1\ldots u_n))=\E t(S_1,\ldots, S_n),
\]
where $u = u_1\ldots u_n$.
\end{lemma}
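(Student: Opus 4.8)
The natural route is induction on $n$, with the defining relation \eqref{xidef} of $\xi$ serving both as the base case and as the engine of the inductive step, together with the branching property. For $n=1$ the individuals of the first generation occupy the positions $X_1,\ldots,X_N$ (the atoms of $\cZ(\varnothing)$), so $S(u)$ runs over $X_1,\ldots,X_N$ as $u$ runs over $\{|u|=1\}$, whence $\sum_{|u|=1} e^{-S(u)} t(S(u)) = \sum_{i=1}^N e^{-X_i} t(X_i)$. Taking expectations and applying \eqref{xidef} — which is valid for the nonnegative $t$ considered here — yields $\E t(\xi) = \E t(S_1)$, the desired identity.

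Assume the identity holds at level $n$ for every bounded measurable $t:\mathbb{R}^n\to\mathbb{R}^+$, and fix such a $t$ on $\mathbb{R}^{n+1}$. Each $u$ with $|u|=n+1$ is of the form $u=iv$, where $i=u_1\in\{1,\ldots,N\}$ and $v$ is an individual of generation $n$ in the branching random walk $\cZ^{(i)}$ rooted at $i$, built from the point processes $(\cZ(iw))_{w\in\I}$; moreover $S(u_1\cdots u_k)=X_i+S^{(i)}(v_1\cdots v_{k-1})$ for $2\le k\le n+1$, where $S^{(i)}$ records positions in $\cZ^{(i)}$ relative to the position of $i$. Hence
\begin{equation*}
\sum_{|u|=n+1} e^{-S(u)}\, t\big(S(u_1),\ldots,S(u_1\cdots u_{n+1})\big) = \sum_{i=1}^N e^{-X_i}\sum_{|v|=n\ \text{in}\ \cZ^{(i)}} e^{-S^{(i)}(v)}\, t\big(X_i,\, X_i+S^{(i)}(v_1),\ldots,\, X_i+S^{(i)}(v_1\cdots v_n)\big).
\end{equation*}
Conditionally on $\calF_1=\sigma(\cZ(\varnothing))$, which carries $N$ and $X_1,\ldots,X_N$, the rooted walks $\cZ^{(1)},\ldots,\cZ^{(N)}$ are independent copies of the original BRW and are independent of $\calF_1$ (branching property). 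Taking conditional expectation given $\calF_1$ and applying the induction hypothesis to the bounded measurable map $(y_1,\ldots,y_n)\mapsto t(x,x+y_1,\ldots,x+y_n)$ at $x=X_i$, the inner expectation becomes $g(X_i)$ with $g(x):=\E\, t(x,x+S_1,\ldots,x+S_n)$. Integrating and invoking \eqref{xidef} once more,
\begin{equation*}
\E\sum_{|u|=n+1} e^{-S(u)}\, t\big(S(u_1),\ldots,S(u_1\cdots u_{n+1})\big) = \E\sum_{i=1}^N e^{-X_i} g(X_i) = \E\, g(\xi) = \E\, t\big(\xi,\, \xi+S_1',\ldots,\, \xi+S_n'\big),
\end{equation*}
where $(S_k')_{k\ge0}$ is a copy of $S$ independent of $\xi$. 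Since $\xi\egaldistr S_1$, the vector $(\xi,\xi+S_1',\ldots,\xi+S_n')$ is a random walk with $n+1$ i.i.d.\ increments and therefore has the law of $(S_1,\ldots,S_{n+1})$, which closes the induction.

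All the manipulations are elementary; the only points demanding care are measure-theoretic: interchanging the (possibly infinite) sums with expectations, justified by Tonelli's theorem because $t\ge0$, with finiteness guaranteed by boundedness of $t$ together with $\E W_n=1$; making precise the rooted branching random walks $\cZ^{(i)}$ and the conditional-independence statement that constitutes the branching property; and noting that \eqref{xidef} extends from bounded to nonnegative test functions. I expect this branching-property bookkeeping in the inductive step to be the only genuine, if routine, obstacle.
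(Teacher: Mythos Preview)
Your induction argument is correct and is the standard proof of the many-to-one formula. The paper does not give its own proof at all; it simply quotes the lemma from Theorem~1.1 in \cite{Shi:2015}, where precisely this induction on $n$ via the branching property and the defining relation \eqref{xidef} is carried out. So there is nothing to compare: you have supplied the proof the paper omits, and it matches the cited source.
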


Let $(\tau_k)_{k\in\mn_0}$ be the sequence of weak descending
ladder epochs, defined by $\tau_0:=0$ and, for $k\in\mn$, $\tau_k:=\inf\{j>\tau_{k-1}: S_j\leq S_{\tau_{k-1}}\}$. Also, let $(\sigma_n)_{n\in\mn_0}$ be the sequence of strict ascending ladder epochs, defined by $\sigma_0:=0$ and, for $n\in\mn$, $\sigma_n:=\inf\{i>\tau_{n-1}: S_i>S_{\sigma_{n-1}}\}$.  In view of $\me\xi=0$, all these random variables are a.s.\ finite. Under $\mmp$, $(S_{\tau_k})_{k\in\mn_0}$ and $(S_{\sigma_n})_{n\in\mn_0}$, being the sequences of weak descending and strict ascending ladder heights, form standard random walks with independent
nonpositive and nonnegative jumps having the same distribution as $S_{\tau_1}$ and $S_{\sigma_1}$, respectively. Under $\mmp$, denote by $U$ and $V$ the renewal functions of $(-S_{\tau_k})_{k\in\mn_0}$ and $(S_{\sigma_k})_{n\in\mn_0}$, respectively, that is,
\begin{equation}\label{renewal}
U(x):=\sum_{k\geq 0}\mmp\{-S_{\tau_k}< x\}\quad \text{and}\quad V(x):=\sum_{k\geq 0}\mmp\{S_{\sigma_k}\leq x\},\quad x\in\mr.
\end{equation}
Plainly, $U(x)=V(x)=0$ for $x<0$. Observe that $U$ is a left-continuous renewal function which is a slight digression, for typically renewal functions are defined to be right-continuous. Nevertheless, the so defined $U$ shares all the standard asymptotic properties of right-continuous renewal functions.

\section{Subharmonic functions of at most linear growth for the killed random walk}\label{subharm}

Throughout this section we retain the notation $S:=(S_n)_{n\in\mn_0}$ for a standard random walk, not necessarily related to the BRW. All the other notation introduced in Section \ref{nota} is also retained but associated to the $S$ as above. We shall assume, without further notice, until the end of this section that the distribution of $\xi$ is nondegenerate, that $\me\xi=0$ and $\me\xi^2<\infty$ (the only exception is Lemma \ref{lem:renewIntegral} in which finiteness of the second moment is not assumed). The following formulae which are ensured by Lemma \ref{don} (a,b) will be often used
\begin{equation}\label{moments}
\mu:=(-\me S_{\tau_1})\in (0,\infty)\quad\text{and}\quad \nu:=\me S_{\sigma_1}\in (0,\infty)
\end{equation}
and
\begin{equation}\label{asymp}
\lim_{x\to\infty}(U(x)/x)=\mu^{-1}\quad \text{and}\quad\lim_{x\to\infty} (V(x)/x)=\nu^{-1}.
\end{equation}

\subsection{Auxiliary results}
Set $\tau:=\inf\{n\in\mn_0: S_n\leq 0\}$ and note that $\tau=\tau_1$ under $\P_x$ for $x>0$ whereas $\tau=0$ under $\P_x$ for $x\leq 0$. For all $x \in \R$, denote by
\[
  \sigma(x): = \inf\{n\in\mn_0: S_n > x\}
\]
the first passage of $S$ into $(x, \infty)$. We now present an alternative formula for the renewal function $U$.
\begin{lemma} \label{lem:renewalRepresentation}
For all $x \geq 0$,
\[
  \mu U(x)= \lim_{y \to \infty} y \P_x\{\sigma(y)<\tau\}=\lim_{y \to \infty} \E_x S_{\sigma(y)}\1_{\{\sigma(y)<\tau\}},
\]
where $\mu = -\E S_{\tau_1}<\infty$.
\end{lemma}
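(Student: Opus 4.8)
The plan is to establish the two identities separately and then show they agree. First I would handle the second equality $\lim_{y\to\infty}\E_x S_{\sigma(y)}\1_{\{\sigma(y)<\tau\}}=\mu U(x)$ using the optional stopping / martingale structure associated with the descending ladder renewal function. The key observation is that $U$ is harmonic for the walk killed on entering $(-\infty,0]$: for $x>0$, $\E_x[U(S_1)\1_{\{\tau>1\}}]=U(x)$ (this is the standard renewal-theoretic identity expressing that $U(x)=\sum_{k\ge0}\P\{-S_{\tau_k}<x\}$ counts ladder points in $[0,x)$, combined with a last-exit decomposition). Consequently $(U(S_{n\wedge\tau}))_{n\ge0}$ is a nonnegative $\P_x$-martingale. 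Stopping it at $\sigma(y)\wedge\tau$ and using $U(S_{\sigma(y)})\1_{\{\sigma(y)<\tau\}}\le U$ evaluated just above $y$, together with $U(S_{n\wedge\tau})\to0$ on $\{\tau<\infty\}=\{\tau<\infty\text{ a.s.}\}$ (since the walk is recurrent and centered, so $\tau<\infty$ $\P_x$-a.s. and $U(S_\tau)=0$ because $S_\tau\le0$), optional stopping gives $U(x)=\E_x[U(S_{\sigma(y)})\1_{\{\sigma(y)<\tau\}}]$. Then I would invoke the asymptotics \eqref{asymp}, namely $U(z)/z\to\mu^{-1}$, to replace $U(S_{\sigma(y)})$ by $\mu^{-1}S_{\sigma(y)}$ up to a vanishing error; the overshoot $S_{\sigma(y)}-y$ has a uniformly (in $y$) integrable bound from the finite variance of $\xi$ via the standard ladder-height renewal theory, which lets me pass the limit through and conclude $\mu^{-1}\lim_y\E_x S_{\sigma(y)}\1_{\{\sigma(y)<\tau\}}=U(x)$, i.e.\ the second equality.

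For the first equality $\lim_{y\to\infty}y\,\P_x\{\sigma(y)<\tau\}=\mu U(x)$, I would compare $y\,\P_x\{\sigma(y)<\tau\}$ with $\E_x S_{\sigma(y)}\1_{\{\sigma(y)<\tau\}}$. On the event $\{\sigma(y)<\tau\}$ we have $S_{\sigma(y)}>y$, so $y\,\P_x\{\sigma(y)<\tau\}\le \E_x S_{\sigma(y)}\1_{\{\sigma(y)<\tau\}}$, giving one inequality in the limit. For the reverse, I would split $\E_x S_{\sigma(y)}\1_{\{\sigma(y)<\tau\}}=\E_x(S_{\sigma(y)}-y)\1_{\{\sigma(y)<\tau\}}+y\,\P_x\{\sigma(y)<\tau\}$ and argue that the overshoot term $\E_x(S_{\sigma(y)}-y)\1_{\{\sigma(y)<\tau\}}$ is negligible relative to $y$ — more precisely that it stays bounded as $y\to\infty$ while $y\,\P_x\{\sigma(y)<\tau\}$ converges to the finite positive limit $\mu U(x)$, so the overshoot contribution, being $o(1)$ after we have shown boundedness and then a refined estimate (dominated-convergence on the overshoot distribution, which converges weakly to the stationary overshoot law), vanishes in the limit. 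Combining the two directions yields equality of the two limits, and with the previous paragraph both equal $\mu U(x)$.

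The main obstacle I anticipate is the uniform control of the overshoot $S_{\sigma(y)}-y$ on the killed event: I must ensure that conditioning on $\{\sigma(y)<\tau\}$ (staying positive before crossing $y$) does not distort the overshoot distribution in a way that breaks uniform integrability. The clean way around this is to avoid conditioning and instead bound $(S_{\sigma(y)}-y)\1_{\{\sigma(y)<\tau\}}\le S_{\sigma(y)}-y$ unconditionally, whose expectation is $O(1)$ by the renewal theorem for the strict ascending ladder walk (finite variance of $\xi$ gives finite mean of the limiting overshoot, e.g.\ via Lemma \ref{don} and the standard formula $\me(S_{\sigma(y)}-y)\to\me S_{\sigma_1}^2/(2\me S_{\sigma_1})$). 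This decouples the overshoot estimate from the killing and makes the passage to the limit routine. A secondary technical point is justifying $U(S_{n\wedge\tau})\to0$ $\P_x$-a.s.\ and the uniform integrability needed for optional stopping at the unbounded time $\sigma(y)\wedge\tau$; both follow from recurrence of $S$ (so $\tau<\infty$ a.s.) together with the linear growth bound $U(z)\le C(1+z)$ and a domination argument, or alternatively by first stopping at $\sigma(y)\wedge\tau\wedge n$, letting $n\to\infty$ by monotone/dominated convergence, and only then letting $y\to\infty$.
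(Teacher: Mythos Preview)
Your overall strategy matches what the paper ultimately invokes from \cite{Alsmeyer+Mallein:2019+}: establish $\E_x[S_{\sigma(y)}\1_{\{\sigma(y)<\tau\}}]\to\mu U(x)$ via a martingale/optional-stopping argument (either with $U(S_{\cdot\wedge\tau})$ as you propose, or equivalently with $S_{\cdot\wedge\tau}$ itself together with the identity $\mu U(x)=x-\E_xS_\tau$, which is the paper's equation~(32) from that reference), and then reduce the first equality to showing that the killed overshoot $\E_x[(S_{\sigma(y)}-y)\1_{\{\sigma(y)<\tau\}}]\to0$. The paper does not argue the latter here; it simply cites it as equation~(35) in Lemma~4.3 of \cite{Alsmeyer+Mallein:2019+}.

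The genuine gap is in your justification of that overshoot estimate. You assert that the unconditional overshoot satisfies $\E_x[S_{\sigma(y)}-y]=O(1)$ because ``finite variance of $\xi$ gives finite mean of the limiting overshoot, via $\E(S_{\sigma(y)}-y)\to\E S_{\sigma_1}^2/(2\E S_{\sigma_1})$.'' But finiteness of $\E S_{\sigma_1}^2$ is \emph{not} a consequence of $\E\xi^2<\infty$: by Lemma~\ref{don}(a) with $\beta=3$ it is equivalent to $\E\xi_+^3<\infty$, which is not assumed in this section. When $\E\xi_+^3=\infty$ one has $\E_x[S_{\sigma(y)}-y]\to\infty$, so your domination $(S_{\sigma(y)}-y)\1_{\{\sigma(y)<\tau\}}\le S_{\sigma(y)}-y$ yields nothing. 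Even granting an $O(1)$ bound, your passage from ``bounded'' to ``$o(1)$'' via ``dominated convergence on the overshoot distribution'' is not substantiated: weak convergence of the overshoot together with bounded first moments does not give the uniform integrability needed to conclude $\E[(S_{\sigma(y)}-y)\1_{A_y}]\to 0$ from $\P_x(A_y)\to 0$. A correct proof of $\E_x[(S_{\sigma(y)}-y)\1_{\{\sigma(y)<\tau\}}]\to 0$ under only $\E\xi^2<\infty$ must exploit the killing (the event $\{\sigma(y)<\tau\}$ has probability of order $1/y$) rather than discard the indicator; this is precisely the content of the result the paper cites and is not as routine as you suggest.
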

\begin{proof}
These equalities can be found in \cite{Alsmeyer+Mallein:2019+}. Namely, equation (32) there gives, for $x \geq 0$,
\[
  \mu U(x) = x - \E_x S_\tau.
\]
Then, the first equation follows from Corollary 4.4 and equation (35) in Lemma 4.3 (both in the cited article) can be written as
\[
  \lim_{y \to \infty} \E_x (S_{\sigma(y)}-y)\1_{\{\sigma(y)<\tau\}}= 0
\]
which completes the proof of the second equality.
\end{proof}

Lemma \ref{lem:harmonicKilled} is a restatement of Proposition 4.1 in \cite{Alsmeyer+Mallein:2019+} which characterizes
right-continuous functions $f:\mr\to\mr$ satisfying
\begin{equation}
  \label{eqn:harmonicKilled}
  \begin{cases}
    f(x) = \E f(x+\xi)\1_{\{\xi>-x\}}= \E_x f(S_1)\1_{\{S_1>0\}}, & \text{ if } x > 0,\\
    f(x) = 0, &\text{ if } x \leq 0,\\
    \limsup_{x \to \infty}(|f(x)|/x) < \infty.
  \end{cases}
\end{equation}
In words, the so defined $f$ are harmonic functions of at most linear growth for the killed centered random walk with finite variance.

For $d > 0$, we say that the distribution of $\xi$ is $d$-arithmetic if $\P\{\xi \in d \Z\}=1$, and $d$ is the largest number with this property. With a slight abuse of notation, we say that the distribution of $\xi$ is $0$-arithmetic if it is nonarithmetic, and that a function $\kappa(\cdot)$ is $0$-periodic if it is a constant.
\begin{lemma}\label{lem:harmonicKilled}
Assume that the distribution of $\xi$ is $d$-arithmetic for $d\geq 0$. Then if $f$ satisfies \eqref{eqn:harmonicKilled}, there exists a right-continuous $d$-periodic function $\kappa(\cdot)$ such that $f(x)= \kappa(x) U(x)$ for $x>0$.
\end{lemma}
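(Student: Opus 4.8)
The plan is to reduce the statement about harmonic functions to the already-cited Proposition~4.1 of \cite{Alsmeyer+Mallein:2019+}, of which Lemma~\ref{lem:harmonicKilled} is described as a mere restatement, and then use Lemma~\ref{lem:renewalRepresentation} to identify $U$ with the distinguished harmonic function. Concretely, I would first record that the function $x\mapsto U(x)$ itself solves \eqref{eqn:harmonicKilled}: it vanishes on $(-\infty,0]$ by construction (recall $U(x)=0$ for $x<0$), it grows linearly by \eqref{asymp}, and it is harmonic for the killed walk because, by Lemma~\ref{lem:renewalRepresentation}, $\mu U(x)=\lim_{y\to\infty}\E_x S_{\sigma(y)}\ind{\sigma(y)<\tau}$, and applying the Markov property at time $1$ on the event $\{S_1>0\}$ (on $\{S_1\le 0\}$ we have $\tau=1\le\sigma(y)$ for $y$ large, so that contribution vanishes) gives $\mu U(x)=\E_x\big[\mu U(S_1)\ind{S_1>0}\big]$. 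Hence $U$ is one nonzero solution of \eqref{eqn:harmonicKilled}, and it is automatically right-continuous up to the left-continuity convention mentioned after \eqref{renewal} — a point I would address by noting that the asymptotic properties, in particular the representation, are insensitive to this modification at the (countably many) jump points.

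Second, I would invoke Proposition~4.1 of \cite{Alsmeyer+Mallein:2019+} in the form already quoted: the cone of right-continuous solutions $f$ of \eqref{eqn:harmonicKilled} is, in the nonarithmetic case, one-dimensional and spanned by $U$ (equivalently, every nonnegative such $f$ is a constant multiple of $U$), while in the $d$-arithmetic case the extra degree of freedom is exactly a right-continuous $d$-periodic multiplier. This is precisely the structural input that makes the lemma work. Given an arbitrary $f$ satisfying \eqref{eqn:harmonicKilled}, I would then define $\kappa(x):=f(x)/U(x)$ for $x>0$ (this is well-defined since $U(x)>0$ for $x>0$, as $U(0{+})\ge 1$), and the content of the cited proposition is that this ratio is $d$-periodic and right-continuous.

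The part requiring the most care — and the part I expect to be the main obstacle — is showing that $\kappa$ is genuinely $d$-periodic, rather than merely measurable or bounded. In the nonarithmetic case ($d=0$) this amounts to showing $\kappa$ is constant, which is the true analytic heart of the matter: one fixes two points and writes, for each $y$, $f(x)=\E_x f(S_{\sigma(y)})\ind{\sigma(y)<\tau}$ by the optional stopping/harmonicity argument, then passes to the limit $y\to\infty$. Controlling this passage to the limit requires the linear-growth hypothesis $\limsup_{x\to\infty}|f(x)|/x<\infty$ together with the overshoot estimate $\E_x(S_{\sigma(y)}-y)\ind{\sigma(y)<\tau}\to 0$ from Lemma~\ref{lem:renewalRepresentation}; the upshot is that $f(x)/U(x)$ and $f(x')/U(x')$ have the same limiting behavior and must coincide. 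In the $d$-arithmetic case one runs the same argument along residue classes modulo $d$, obtaining one constant per class, i.e.\ a $d$-periodic $\kappa$. Since the excerpt permits me to quote \cite{Alsmeyer+Mallein:2019+}, I would keep this final step terse: state that \eqref{eqn:harmonicKilled} is exactly the hypothesis of Proposition~4.1 there, that $U$ is a distinguished solution by Lemma~\ref{lem:renewalRepresentation}, and that the quoted proposition therefore yields the claimed representation $f=\kappa\, U$ on $(0,\infty)$ with $\kappa$ right-continuous and $d$-periodic.
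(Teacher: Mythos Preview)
Your proposal is correct and aligns with the paper's approach: the paper gives no proof of this lemma at all, stating only that it ``is a restatement of Proposition 4.1 in \cite{Alsmeyer+Mallein:2019+}''. Your sketch supplies strictly more detail than the paper does (verifying that $U$ is itself a solution via Lemma~\ref{lem:renewalRepresentation}, and outlining the optional-stopping/overshoot argument behind the periodicity of $\kappa$), but the core move is identical---quote the cited proposition.
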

In particular, note that any solution to \eqref{eqn:harmonicKilled} is a scalar multiple of the renewal function $U$ provided that the distribution of $\xi$ is nonarithmetic.

Given next is a formula which represents the expectation of an additive functional of the killed random walk in terms of renewal functions.
\begin{lemma}\label{lem:renewIntegral}
Not assuming that $\me \xi^2<\infty$, for all measurable functions $p:\mr^+\to \mr^+$ and $x>0$,
\[
  \E_x \sum_{k=0}^{\tau-1} p(S_k)=\int_{[0,\,x]} \dd U(y)\int_{[0,\,\infty)}\dd V(z)p(x-y+z),
\]
where $V$ is the renewal function defined in \eqref{renewal}. Here, both sides of the equality may be infinite.
\end{lemma}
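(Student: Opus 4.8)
The plan is to compute the expected occupation measure of the killed walk $(S_k)_{0 \le k < \tau}$ started at $x > 0$ and identify it with the product measure $\dd U \otimes \dd V$ pushed forward by $(y,z) \mapsto x - y + z$. The natural route is the classical Wiener--Hopf / duality decomposition of a random walk path: any path of length $n$ can be split at the position of its (weak) minimum into a strictly ascending part run backwards and a weakly descending part. Concretely, I would first recall that for a centered walk the potential kernel of the killed walk admits the representation $\E_x \sum_{k \ge 0} \ind{S_k \in \dd w,\ \tau > k} = g(x, \dd w)$ where $g$ is built from the renewal measures of the two ladder processes; this is exactly the content one gets from Spitzer's combinatorial lemma or from the identity (valid for $|s|<1$) relating $\sum_n s^n \P_x\{S_n \in \dd w, \tau > n\}$ to the product of the generating functions of the strict ascending ladder heights and the weak descending ladder heights. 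Setting $s \uparrow 1$ (which is legitimate since $\E \xi = 0$ makes both ladder processes have finite mean ladder epochs and the monotone convergence theorem applies termwise) yields
\[
  \E_x \sum_{k=0}^{\tau-1} \ind{S_k \in B} = \int_{[0,x]} \dd U(y) \int_{[0,\infty)} \dd V(z)\, \ind{x - y + z \in B}
\]
for Borel $B \subseteq \R^+$, and then integrating against $p$ via the monotone class theorem (first indicators, then simple functions, then nonnegative measurable $p$ by monotone convergence) gives the claimed formula, with both sides allowed to be $+\infty$.

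The key steps in order: (i) fix $s \in (0,1)$ and write down the Wiener--Hopf factorization for the killed walk, namely $\sum_{n \ge 0} s^n \P_x\{S_n \in \dd w,\ \tau > n\}$ equals the convolution of $\sum_{k \ge 0} s^{?}\P\{S_{\sigma_k} \in \cdot\}$-type series (strict ascending ladder heights contributing the $+z$ shift, restricted to the excursion above the running minimum) with $\sum_{k\ge 0}\P\{-S_{\tau_k} \in \cdot\}$-type series on $[0,x]$ (weak descending ladder heights, which must stay above $0$, hence the truncation at $x$); (ii) justify letting $s \uparrow 1$ by monotone convergence, noting that the $s$-weights disappear in the limit precisely because the mean ladder epochs $\E \tau_1$ and $\E \sigma_1$ are finite — here the left-continuity convention for $U$ noted after \eqref{renewal} matches the \emph{weak} descending ladder structure, and the closed interval $[0,\infty)$ with right-continuous $V$ matches the \emph{strict} ascending structure, so the endpoint conventions are forced and must be checked carefully; (iii) upgrade from indicator functions to general measurable $p \ge 0$ by monotone class / monotone convergence; (iv) observe that no second moment is used anywhere — only $\E \xi = 0$ (to make $\tau$ a.s. finite and the ladder means finite) and nondegeneracy are invoked, matching the hypothesis of the lemma.

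The main obstacle is step (i)--(ii): getting the Wiener--Hopf decomposition of the \emph{killed} walk correct, with the right pairing of weak-versus-strict ladder processes and the right half-open/closed interval conventions, so that the truncation of the $U$-integral to $[0,x]$ (reflecting that the descending ladder heights of the pre-minimum part cannot push $S$ below $0$) comes out exactly and the two renewal measures appear with their stated definitions \eqref{renewal}. A clean way to sidestep a hands-on combinatorial argument is to cite the duality lemma (Feller's duality principle, Lemma 1 on p.~609 in \cite{Feller:1971}, already referenced in the excerpt) or Spitzer's identity; alternatively, one can derive the formula directly from Lemma \ref{lem:renewalRepresentation} and an approximation argument, but the Wiener--Hopf route is the most transparent. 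Once the identity is established for indicators, everything else is routine measure theory, and the "both sides may be infinite" clause is handled automatically by monotone convergence since all quantities involved are nonnegative.
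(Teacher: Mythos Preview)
Your plan is correct in outline, but it takes a more circuitous route than the paper. You propose the classical Wiener--Hopf factorization with a generating-function parameter $s\in(0,1)$, followed by $s\uparrow 1$ and a monotone-class upgrade. The paper instead works directly at $s=1$: it decomposes $\sum_{j=0}^{\tau-1}p(S_j)$ into blocks over the weak descending ladder epochs $[\tau_k,\tau_{k+1})$, observes that on each block the killing indicator $\1_{\{x+S_1>0,\ldots,x+S_j>0\}}$ collapses to $\1_{\{-S_{\tau_k}<x\}}$ (since the $-S_{\tau_k}$ are the running maxima of $-S$), applies the strong Markov property at $\tau_k$, and then invokes Feller's duality once to identify the conditional block expectation with $r(x+S_{\tau_k})$ where $r(y)=\int_{[0,\infty)}p(y+z)\,\dd V(z)$. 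Summing over $k$ gives $\int_{[0,x]}r(x-y)\,\dd U(y)$ directly. This bypasses your steps (i)--(ii) entirely: no generating functions, no $s\uparrow 1$, and hence no need for your remark that finite ladder means are required for that limit (a remark that is in any case unnecessary, since monotone convergence on nonnegative series needs no moment hypothesis, and is also not guaranteed here as the lemma explicitly drops the second-moment assumption). Your approach would work and is the textbook derivation of the same identity; the paper's cycle decomposition is simply more economical and makes the weak-versus-strict ladder pairing (which you rightly flag as the delicate point) transparent at the path level.
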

\begin{proof}
Set $r(x):=\int_{[0,\,\infty)} p(x+z) \dd V(z)$ for $x \geq 0$. We use a standard decomposition of $S$ into cycles: for $x>0$,
\begin{multline*}
\me_x\sum_{j=0}^{\tau-1}p(S_j)=\me \sum_{j\geq
0}p(x+S_j)\1_{\{x+S_1>0,\ldots, x+S_j>0\}}=\me \sum_{k\geq
0}\sum_{j=\tau_k}^{\tau_{k+1}-1}p(x+S_j)\1_{\{x+S_1>0,\ldots,x+S_j>
0\}}\\= \me \sum_{k\geq
0}\1_{\{x+S_{\tau_k}>0\}}\sum_{j=\tau_k}^{\tau_{k+1}-1}p(x+S_j)=\me
\sum_{k\geq
0}\1_{\{x+S_{\tau_k}>0\}}\sum_{j=0}^{\tau_{k+1}-\tau_k-1}p(x+S_{\tau_k}+(S_j-S_{\tau_k}))\\=
\me \sum_{k\geq
0}\1_{\{-S_{\tau_k}<x\}}r(x-(-S_{\tau_k}))=\int_{[0,\,x]}r(x-y){\rm
d}U(y).
\end{multline*}
Here, the third equality follows from the fact that $0\leq
-S_{\tau_1}\leq -S_{\tau_2}\leq \ldots$ are the weak record values of
the sequence $(-S_j)_{j\in\mn_0}$, whence, for integer $j\in
[\tau_k, \tau_{k+1}-1]$,
$$\1_{\{x+S_1> 0,\ldots, x+S_j>0\}}=\1_{\{x+S_{\tau_1}>0,\ldots, x+S_{\tau_k}>0\}}=\1_{\{x+S_{\tau_k}>0\}}.$$ To
explain the penultimate equality, note that given $S_{\tau_k}$,
for any $y\in\mr$, by the strong Markov property,
$\sum_{j=0}^{\tau_{k+1}-\tau_k-1}p(y+(S_j-S_{\tau_k}))$ has the
same $\P$- distribution as $\sum_{j=0}^{\tau_1-1}p(y+S_j)$ which, in its
turn, has the same $\P$-distribution as $\sum_{k\geq
0}p(y+S_{\sigma_k})$ by the duality principle (see Lemma 1 on
p.~609 in \cite{Feller:1971}). In particular,
$$\me\Big(\sum_{j=0}^{\tau_{k+1}-\tau_k-1}p(x+S_{\tau_k}+(S_j-S_{\tau_k}))\Big|S_{\tau_k}\Big)=r(x+S_{\tau_k}).$$ The proof of Lemma \ref{lem:renewIntegral} is complete.
\end{proof}

\subsection{New results}

In this section, we extend Lemma \ref{lem:harmonicKilled} by
characterizing right-continuous subharmonic functions of at most
linear growth for the killed random walk. More precisely, given
$g:\mr^+\to\mr^+$ a c\`{a}dl\`{a}g function and $h: (-\infty,
0]\to \mr$ a right-continuous bounded function, we aim at finding
all right-continuous functions $f$ that satisfy
\begin{equation}\label{eqn:problem}
  \begin{cases}
    f(x) = \E f(x+\xi)-g(x), &\text{ if } x > 0\\
    f(x) = h(x), &\text{ if } x \leq 0
  \end{cases}
\end{equation}
and
\begin{equation}\label{eqn:asy}
\limsup_{x \to \infty}(|f(x)|/x) < \infty.
\end{equation}
The definition of directly Riemann integrable (dRi) functions
which are mentioned below can be found in Section \ref{sect:dri}.
\begin{theorem}\label{prop:gen}
Assume that the distribution of $\xi$ is $d$-arithmetic for $d\geq 0$. If solutions $f$ to \eqref{eqn:problem} exist, then, for each $x>0$,
\begin{equation}\label{eqn:sumCondition}
\E_x \sum_{k=0}^{\tau-1}g(S_k) < \infty.
\end{equation}
Conversely, if \eqref{eqn:sumCondition} holds for some $x>0$ and
the function $g$ is dRi on $\mr^+$, then there exist solutions $f$ to \eqref{eqn:problem} with $\lim_{x\to\infty}(f(x)/x)=0$. Furthermore, to any solution $f$ satisfying \eqref{eqn:asy} there corresponds a $d$-periodic right-continuous function $\kappa(\cdot)$ such that,
for all $x>0$,
\begin{equation}\label{impo1}
f(x) = \kappa(x)U(x) + \E_x h(S_\tau)-\E_x \sum_{k=0}^{\tau- 1}g(S_k).
\end{equation}
\end{theorem}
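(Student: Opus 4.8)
The plan is to treat \eqref{eqn:problem} as a Dirichlet-type problem for the centered random walk killed upon entering $(-\infty,0]$, decompose any candidate solution into a ``particular solution'' accounting for the inhomogeneity $g$ and the boundary data $h$, plus a ``harmonic part'' to which Lemma~\ref{lem:harmonicKilled} applies. Concretely, I would proceed as follows.

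\textbf{Necessity of \eqref{eqn:sumCondition}.} Suppose $f$ solves \eqref{eqn:problem}--\eqref{eqn:asy}. Iterating the mean-value identity $f(x)=\E f(x+\xi)-g(x)$ and stopping at $\tau$, the optional-stopping/telescoping argument gives, for $x>0$ and each $n$,
\[
f(x)=\E_x f(S_{n\wedge\tau})-\E_x\sum_{k=0}^{(n\wedge\tau)-1}g(S_k).
\]
Since $g\geq 0$, the sums $\E_x\sum_{k=0}^{(n\wedge\tau)-1}g(S_k)$ are nondecreasing in $n$; to conclude they stay bounded I need the term $\E_x f(S_{n\wedge\tau})$ to be bounded, which is where \eqref{eqn:asy} enters. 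On $\{\tau\le n\}$, $f(S_{n\wedge\tau})=h(S_\tau)$ is bounded since $h$ is bounded; on $\{\tau>n\}$ one uses the linear-growth bound $|f(S_n)|\le C(1+S_n)$ together with a standard estimate for the centered killed walk, namely $\E_x[(1+S_n)\1_{\{\tau>n\}}]$ stays bounded in $n$ (this follows from $\E_x[S_n\1_{\{\tau>n\}}]=x-\E_x S_{\tau\wedge n}\to x-\E_x S_\tau<\infty$ and $\P_x\{\tau>n\}\to 0$, as already used in Lemma~\ref{lem:renewalRepresentation}). Letting $n\to\infty$ then forces $\E_x\sum_{k=0}^{\tau-1}g(S_k)<\infty$, and simultaneously yields the representation
\[
f(x)=\lim_{n\to\infty}\E_x f(S_n)\1_{\{\tau>n\}}+\E_x h(S_\tau)-\E_x\sum_{k=0}^{\tau-1}g(S_k).
\]

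\textbf{Sufficiency and the formula \eqref{impo1}.} Assuming \eqref{eqn:sumCondition} holds for some (hence, by Lemma~\ref{lem:renewIntegral} and a standard comparison between $\E_x$-potentials started from different points, every) $x>0$, and that $g$ is dRi, I would define the candidate particular solution
\[
f_0(x):=\E_x h(S_\tau)-\E_x\sum_{k=0}^{\tau-1}g(S_k),\qquad x>0,\quad f_0(x):=h(x),\quad x\le 0.
\]
By the strong Markov property at time $1$ (first-step analysis), $f_0$ satisfies the inhomogeneous equation in \eqref{eqn:problem} on $x>0$ and matches $h$ on $(-\infty,0]$; the only genuine check is that $\E_x\sum_{k=0}^{\tau-1}g(S_k)$, as a function of $x$, has the right-continuity and the expected decomposition behavior at the boundary. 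The dRi assumption on $g$, fed into the renewal-theoretic formula of Lemma~\ref{lem:renewIntegral}, $\E_x\sum_{k=0}^{\tau-1}g(S_k)=\int_{[0,x]}\dd U(y)\int_{[0,\infty)}\dd V(z)\,g(x-y+z)$, should give $\E_x\sum_{k=0}^{\tau-1}g(S_k)=o(x)$: indeed the inner integral $r(x-y)=\int_0^\infty g(x-y+z)\dd V(z)$ is bounded and, by the key renewal theorem (since $g$ is dRi and $V$ is a renewal function with $\lim V(x)/x=\nu^{-1}$), $r(t)\to 0$ as $t\to\infty$ if $g$ is additionally integrable, while $\int_{[0,x]}\dd U(y)$ grows linearly; a Cesàro-type argument using $\lim U(x)/x=\mu^{-1}$ then yields $o(x)$. (For general dRi $g$ one splits $g$ into a compactly supported part and a tail and uses dominated convergence; here I expect the dRi hypothesis is exactly calibrated to make this work, possibly combined with the finiteness \eqref{eqn:sumCondition} to control the contribution near $y=x$.) Also $\E_x h(S_\tau)$ is bounded, so $f_0(x)=o(x)$, giving the asserted solution with $f(x)/x\to 0$.

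\textbf{Uniqueness up to the harmonic part.} Given any solution $f$ satisfying \eqref{eqn:asy} and the particular solution $f_0$ constructed above, the difference $\phi:=f-f_0$ satisfies $\phi(x)=\E\phi(x+\xi)$ for $x>0$, $\phi(x)=0$ for $x\le 0$, and $\limsup_{x\to\infty}|\phi(x)|/x<\infty$ (since both $f$ and $f_0$ have at most linear growth). That is, $\phi$ solves \eqref{eqn:harmonicKilled}, so Lemma~\ref{lem:harmonicKilled} produces a right-continuous $d$-periodic $\kappa(\cdot)$ with $\phi(x)=\kappa(x)U(x)$ on $x>0$, which is precisely \eqref{impo1}. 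One small point to verify is that $\phi$ is right-continuous: this holds because $f$ is assumed right-continuous and $f_0$ is right-continuous, the latter following from right-continuity of $x\mapsto U(x)$-integrals (modulo the left-continuity convention on $U$ noted in Section~\ref{nota}, which only shifts mass at atoms and does not affect right-continuity of the integral) together with dominated convergence justified by \eqref{eqn:sumCondition}.

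\textbf{Main obstacle.} The delicate step is showing $\E_x\sum_{k=0}^{\tau-1}g(S_k)=o(x)$ as $x\to\infty$ for dRi $g$ — i.e., that the particular solution really has sublinear growth — since this is what separates the ``true'' subharmonic solutions from the harmonic multiples of $U$ and pins down the structure in \eqref{impo1}. This requires carefully combining the double-renewal representation of Lemma~\ref{lem:renewIntegral}, the asymptotics \eqref{asymp}, and a key-renewal/direct-Riemann-integrability argument; handling the region where $y$ is close to $x$ in the $\dd U(y)$ integral (where the inner renewal integral need not be small) is the technical crux, and I expect it is controlled precisely by the summability \eqref{eqn:sumCondition} via a dominated-convergence or uniform-integrability estimate. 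A secondary but routine obstacle is the justification that $\E_x f(S_n)\1_{\{\tau>n\}}\to 0$ for a genuinely linearly growing $f$, which needs the sharper estimate $\E_x[S_n\1_{\{\tau>n\}}]=x-\E_xS_{n\wedge\tau}$ bounded in $n$ rather than merely $\P_x\{\tau>n\}\to0$.
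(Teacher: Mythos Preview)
Your three-step plan (necessity, particular solution, harmonic difference via Lemma~\ref{lem:harmonicKilled}) is exactly the paper's strategy, and your construction of $f_0$ and the $o(x)$ argument via the double-renewal formula of Lemma~\ref{lem:renewIntegral} match Lemmas~\ref{dri2} and~\ref{lem:simple}.

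One point of comparison: for necessity the paper (Lemma~\ref{lem:firstStep}) stops the martingale $M_n=f(S_n)-\sum_{k<n}g(S_k)$ at $\tau\wedge\sigma(y)\wedge n$, so that $S_k$ stays in the bounded interval $(0,y]$ and all integrability is automatic; it then passes $n\to\infty$ by dominated convergence and finally $y\to\infty$ using Lemma~\ref{lem:renewalRepresentation}. Your single-stopping at $\tau\wedge n$ with the bound $\E_x S_n\1_{\{\tau>n\}}=x-\E_xS_\tau\1_{\{\tau\le n\}}\le \mu U(x)$ is more direct and also works, but note that to justify the iterated identity $f(x)=\E_x f(S_{n\wedge\tau})-\E_x\sum_{k=0}^{(n\wedge\tau)-1}g(S_k)$ you implicitly need $\E_x g(S_k)\1_{\{k<\tau\}}<\infty$ for each fixed $k$; this follows because the existence of a linearly-growing solution forces $g(x)=\E f(x+\xi)-f(x)\le C'(1+x)$, but you should make that explicit. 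The paper's extra $\sigma(y)$-stopping sidesteps this by keeping $g(S_k)$ bounded.

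A minor misdiagnosis in your ``main obstacle'': the $o(x)$ growth of $\E_x\sum_{k<\tau}g(S_k)$ does not hinge on controlling the region $y\approx x$ in the $\dd U(y)$-integral via \eqref{eqn:sumCondition}. The inner integral $r(t)=\int_{[0,\infty)}g(t+z)\,\dd V(z)$ is bounded near $t=0$ (c\`adl\`ag $g$, subadditive $V$) and tends to $0$ as $t\to\infty$ purely from direct Riemann integrability of $g$ (bound $r(t)\le V(1)\sum_{n>\lfloor t\rfloor}\sup_{[n-1,n)}g$); then $\int_{[0,x]}r(x-y)\,\dd U(y)=o(U(x))=o(x)$ by a Stolz--Ces\`aro argument with \eqref{asymp}. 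Condition~\eqref{eqn:sumCondition} is used only to propagate finiteness from one $x$ to all $x$, not for the asymptotic.
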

\begin{remark}\label{rem_im}
Assume that the distribution of $\xi$ is nonarithmetic (the
arithmetic case is discussed in Remark \ref{arith}). Then so is the distribution of $S_{\sigma_1}$, see Lemma \ref{don}(c). According to
Lemmas \ref{lem:renewIntegral} and \ref{integr}(c), condition
\eqref{eqn:sumCondition} holding for some $x>0$ does not even guarantee that the function
$g$ is Lebesgue integrable on $\mr^+$. However, by Lemma \ref{integr} (d), it does under an additional uniformity condition. Conversely, while by Lemma \ref{integr}(a), \eqref{eqn:sumCondition} may fail to hold for each $x>0$ if $g$ is Lebesgue integrable, by Lemma \ref{integr}(b), direct Riemann integrability of $g$ is a sufficient condition ensuring that \eqref{eqn:sumCondition} holds for each
$x>0$. Summarizing, we think that condition \eqref{eqn:sumCondition} alone is
not sufficient for proving \eqref{imp123}. This is the reason
behind introducing in Theorem \ref{prop:gen} the additional
assumption that $g$ is dRi which in conjunction with
\eqref{eqn:sumCondition} guarantees that \eqref{imp123} holds, see Lemma \ref{dri2}.
\end{remark}
The proof of Theorem \ref{prop:gen} consists of the three
steps. First, in Lemma \ref{lem:firstStep}, we prove that
condition \eqref{eqn:sumCondition} is necessary for the existence
of a solution. Second, in Lemma \ref{lem:simple}, we exhibit a
particular solution to \eqref{eqn:problem} which is a subharmonic
function of sublinear growth. Third, in the proof of Theorem \ref{prop:gen}, using the linearity of \eqref{eqn:problem} we show
that any solution to \eqref{eqn:problem} is the sum of a harmonic
function of linear growth and the subharmonic function obtained at
the second step.
\begin{lemma}\label{lem:firstStep}
Assume that condition \eqref{eqn:sumCondition} does not hold for $x=x_0>0$. Then no solution to \eqref{eqn:problem} exists.
\end{lemma}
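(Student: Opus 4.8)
The plan is to prove the contrapositive: if $f$ is a right-continuous function satisfying \eqref{eqn:problem} together with the growth condition \eqref{eqn:asy} (as is understood throughout Theorem \ref{prop:gen}), then $\E_x\sum_{k=0}^{\tau-1}g(S_k)<\infty$ for \emph{every} $x>0$. I fix such an $f$ and fix $x_0>0$, and work under $\P_{x_0}$. One preliminary point is that $f$ is bounded on compact subsets of $(0,\infty)$: the equation \eqref{eqn:problem} implicitly requires $\E|f(x+\xi)|<\infty$ for each $x>0$, the c\`adl\`ag function $g$ is locally bounded, and $g\geq0$ forces $f(x)\leq\E f(x+\xi)$; combined with \eqref{eqn:asy} this yields a constant $C_1$ such that $|f(y)|\leq C_1(1+y)$ for all $y>0$.

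The main device is the process
\[
Y_n:=f(S_{n\wedge\tau})-\sum_{k=0}^{n\wedge\tau-1}g(S_k),\qquad n\in\mn_0,
\]
the empty sum being $0$, so $Y_0=f(x_0)$. On $\{\tau>n\}$ one has $S_n>0$, and the first line of \eqref{eqn:problem} evaluated at the point $S_n$ says that the conditional expectation of $f(S_{n+1})$ given $S_0,\dots,S_n$ equals $f(S_n)+g(S_n)$ there; since moreover $Y$ is frozen after time $\tau$, one checks (the integrability for each fixed $n$ and the vanishing of the conditional increments being a routine induction on $n$, using $|f(S_{n\wedge\tau})|\leq\|h\|_\infty+C_1(1+|S_n|)$ and $\E_{x_0}|S_n|<\infty$) that $(Y_n)_{n\in\mn_0}$ is a martingale. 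Hence $\E_{x_0}[Y_n]=f(x_0)$ for all $n$, i.e.
\[
f(x_0)=\E_{x_0}\bigl[f(S_{n\wedge\tau})\bigr]-\E_{x_0}\Bigl[\,\sum_{k=0}^{n\wedge\tau-1}g(S_k)\Bigr],\qquad n\in\mn_0.
\]

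The crux is to bound $\E_{x_0}[f(S_{n\wedge\tau})]$ from above, uniformly in $n$. Split $f(S_{n\wedge\tau})=f(S_\tau)\1_{\{\tau\leq n\}}+f(S_n)\1_{\{\tau>n\}}$. On $\{\tau\leq n\}$ we have $S_\tau\leq0$, so $f(S_\tau)=h(S_\tau)$ contributes at most $\|h\|_\infty$. On $\{\tau>n\}$ we use $f(S_n)\leq C_1(1+S_n)$, which reduces the matter to a uniform bound on $\E_{x_0}[S_n\1_{\{\tau>n\}}]$. Since $\E\xi=0$, the stopped walk $(S_{n\wedge\tau})_{n\in\mn_0}$ is a martingale with $\E_{x_0}[S_{n\wedge\tau}]=x_0$, so
\[
\E_{x_0}\bigl[S_n\1_{\{\tau>n\}}\bigr]=x_0-\E_{x_0}\bigl[S_\tau\1_{\{\tau\leq n\}}\bigr]=x_0+\E_{x_0}\bigl[(-S_\tau)\1_{\{\tau\leq n\}}\bigr]\leq x_0+\E_{x_0}[-S_\tau]=\mu\,U(x_0),
\]
the last equality being the relation $\mu U(x_0)=x_0-\E_{x_0}S_\tau$ from (the proof of) Lemma \ref{lem:renewalRepresentation}; in particular $\E_{x_0}[-S_\tau]=\mu U(x_0)-x_0<\infty$ because the renewal function $U$ is locally finite. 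Thus $\E_{x_0}[f(S_{n\wedge\tau})]\leq\|h\|_\infty+C_1\bigl(1+\mu U(x_0)\bigr)$ for every $n$.

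Combining the two displays, $\E_{x_0}\bigl[\sum_{k=0}^{n\wedge\tau-1}g(S_k)\bigr]\leq\|h\|_\infty+C_1(1+\mu U(x_0))+|f(x_0)|$ for all $n$; since $g\geq0$ and $n\wedge\tau\uparrow\tau<\infty$ $\P_{x_0}$-a.s.\ (here $\tau<\infty$ a.s.\ because $\E\xi=0$), monotone convergence yields $\E_{x_0}\sum_{k=0}^{\tau-1}g(S_k)<\infty$, contradicting the failure of \eqref{eqn:sumCondition} at $x_0$. The step I expect to be the real obstacle is the uniform-in-$n$ control of $\E_{x_0}[f(S_{n\wedge\tau})]$: the family $(S_n\1_{\{\tau>n\}})_{n}$ is \emph{not} uniformly integrable, converging to $0$ a.s.\ while its expectation increases to $\mu U(x_0)>0$, so one cannot simply pass to the limit inside the expectation; what makes the argument work is precisely that this expectation equals the monotone quantity $x_0+\E_{x_0}[(-S_\tau)\1_{\{\tau\leq n\}}]$, whose finite limit is governed by the undershoot $-S_\tau$ — and finiteness of $\E_{x_0}[-S_\tau]$ is exactly where the finite-variance assumption enters, through local finiteness of the renewal function $U$. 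A secondary point that must be handled with some care is the preliminary local boundedness of $f$ noted at the outset.
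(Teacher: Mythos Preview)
Your proof is correct and takes a genuinely different route from the paper's.

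The paper localizes the martingale $M_n=f(S_n)-\sum_{k=0}^{n-1}g(S_k)$ at the stopping time $\tau\wedge\sigma(y)\wedge n$, where $\sigma(y)$ is the first passage of $S$ into $(y,\infty)$. It first lets $n\to\infty$ via dominated convergence, which requires a separate sub-argument that $\tau\wedge\sigma(y)$ has exponential tails under $\P_{x_0}$ (so that in particular $\E_{x_0}(\tau\wedge\sigma(y))<\infty$), and then lets $y\to\infty$, using Lemma \ref{lem:renewalRepresentation} to control $\E_{x_0}f(S_{\sigma(y)})\1_{\{\sigma(y)<\tau\}}$ and monotone convergence for the sum of $g$-terms.

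You bypass the second localization $\sigma(y)$ entirely. Stopping only at $n\wedge\tau$, you observe that the potentially problematic term $\E_{x_0}[S_n\1_{\{\tau>n\}}]$ equals $x_0+\E_{x_0}[(-S_\tau)\1_{\{\tau\leq n\}}]$ by the optional stopping identity $\E_{x_0}S_{n\wedge\tau}=x_0$, and this increases monotonically to $\mu U(x_0)<\infty$. This is exactly the undershoot formula $\mu U(x_0)=x_0-\E_{x_0}S_\tau$ quoted in the proof of Lemma \ref{lem:renewalRepresentation}, so you need strictly less of that lemma than the paper does. Your approach trades the dominated-convergence step (and the exponential-tail estimate for $\tau\wedge\sigma(y)$) for a single monotone-convergence step, which is both shorter and conceptually more transparent; the paper's route, on the other hand, makes the appearance of the limit $\mu U(x_0)$ via $\lim_{y\to\infty}\E_{x_0}S_{\sigma(y)}\1_{\{\sigma(y)<\tau\}}$ somewhat more explicit.

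One remark: the local boundedness of $f$ on $(0,\infty)$ (needed to obtain the global estimate $|f(y)|\leq C_1(1+y)$) is asserted rather than proved in the paper as well, so your flagging of this point is apt; the subharmonicity argument you sketch does not by itself close this gap, but neither does the paper address it.
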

\begin{proof}
Assume on the contrary that there exists a solution to \eqref{eqn:problem} and denote it by $f$. We define $g$ for negative arguments by $g(x):=\E f(x+\xi) - h(x)$, $x\le 0$.
For $n\in\mn_0$, put
$M_n:=f(S_n)-\sum_{k=0}^{n-1}g(S_k)$ and, for $n\in\mn$, let $\mathcal{G}_n$
denote the $\sigma$-algebra generated by $\xi_1,\ldots, \xi_n$, with $\mathcal{G}_0$ being the trivial $\sigma$-algebra.
The sequence $(M_n, \mathcal{G}_n)_{n\in\mn_0}$ is a
$\mmp_{x_0}$-martingale. Since, for each $n\in\mn_0$, $\tau\wedge \sigma(y)\wedge n$ is a stopping time with respect to the filtration $(\mathcal{G}_k)_{k\in\mn}$, then, for $y\geq x_0$, the sequence
$(M_{\tau\wedge \sigma(y)\wedge n}, \mathcal{G}_n)_{n\in\mn_0}$ is also a $\mmp_{x_0}$-martingale. In particular,
\begin{equation} \label{eqn:stoppedMart}
f(x_0) = \E_{x_0} M_0 = \E_{x_0} M_{\tau\wedge \sigma(y)\wedge n},\quad n\in\mn_0.
\end{equation}

We intend to show that
\begin{equation}\label{equa1}
\lim_{n\to\infty} \E_{x_0} M_{\tau\wedge \sigma(y)\wedge n}=\E_{x_0} M_{\tau\wedge \sigma(y)}.
\end{equation}
Note that $\lim_{n\to\infty} M_{\tau\wedge \sigma(y)\wedge n}=M_{\tau\wedge \sigma(y)}$ $\mmp_{x_0}$-a.s. Hence, according to the Lebesgue dominated convergence theorem, it is enough to check that
\begin{equation}\label{ineq1}
\me_{x_0}\sup_{n \geq 0} |M_{\tau\wedge \sigma(y)\wedge n}|<\infty.
\end{equation}
To this end, write, for $n\in\mn_0$,
\begin{multline*}
|M_{\tau\wedge \sigma(y)\wedge n}|\leq |f(S_n)|\1_{\{\tau\wedge \sigma(y)>n\}}+|f(S_{\tau\wedge \sigma(y)})|\1_{\{\tau\wedge \sigma(y)\leq n\}}+\sum_{k=0}^{\tau\wedge \sigma(y)-1}g(S_k)\\ \leq \sup_{z \in [0,\,y]}|f(z)|+ |h(S_\tau)|+|f(S_{\sigma(y)})|+(\tau\wedge\sigma(y))\sup_{z \in [0,\,y]} g(z)\quad \mmp_{x_0}-\text{a.s.}
\end{multline*}
having utilized the fact that $S_k\in (0,y]$ on the event $\{\tau\wedge \sigma(y)>k\}$ for the first and the last summands, and $f(x)=h(x)$ for $x\leq 0$ in combination with $S_\tau\leq 0$ $\mmp_{x_0}$-a.s.\ for the second summand. To prove inequality \eqref{ineq1} we have to show that the right-hand side of the last centered formula (which does not depend on $n$) is $\mmp_{x_0}$-integrable.

Since $h$ is bounded on $(-\infty, 0]$ by assumption and $S_\tau\leq 0$ $\mmp_{x_0}$-a.s., we trivially infer $\me_{x_0}|h(S_\tau)|<\infty$. Further, since, by assumption, $f$ is a right-continuous function of at most linear growth, there exists $C>0$ such that $|f(z)| \leq C(z+1)$ for $z\geq 0$. Hence, $\sup_{z \in [0,\,y]}|f(z)|<\infty$, and also $\me_{x_0}|f(S_{\sigma(y)})|\leq C(\me_{x_0}S_{\sigma(y)}+1)=C(\nu V(y-x_0)+1)<\infty$, where $V$ is the renewal function defined in \eqref{renewal}. The last inequality is justified by \eqref{moments}. The inequality $\sup_{z \in [0,\,y]} g(z)<\infty$ is secured by our assumption that $g$ is a c\`{a}dl\`{a}g function. So, it remains to prove that
\begin{equation}\label{ineq2}
\me_{x_0}(\tau\wedge \sigma(y))<\infty.
\end{equation}
Since the distribution of $\xi$ is nondegenerate, there exists $\delta > 0$ such that $\P\{\xi > \delta\}\in (0,1)$. Set $N_y:=\ceil{y/\delta}$,
where $z\mapsto \ceil{z}$ for $z\in\mr$ is the ceiling function. Then
\[\sup_{z \in [0,\,y]} \P_z\{\tau \wedge \sigma(y)\leq N_y\}\geq \sup_{z \in [0,\,y]} \P_z\{\inf_{1\leq j\leq N_y}\,\xi_j>\delta\}=(\P\{\xi > \delta\})^{N_y}
=:\rho_y\in (0,1).\] Now an application of the Markov property yields, for $k\in\mn$,
\[\P_{x_0}\{\tau\wedge \sigma(y)>k N_y\} \leq \Big(1 - \sup_{z \in [0,\,y]} \P_z\{\tau\wedge \sigma(y)\leq N_y\}\Big)^k\leq (1-\rho_y)^k.
\]
This shows that the $\mmp_{x_0}$-distribution of $\tau\wedge \sigma(y)$ has an exponential tail which particularly implies \eqref{ineq2}. Thus, formula \eqref{equa1} has been proved.

A combination of \eqref{eqn:stoppedMart} and \eqref{equa1} gives
\[
  f(x_0)= \E_{x_0} f(S_{\sigma(y)})\1_{\{\sigma(y)<\tau\}}+\me_{x_0} h(S_\tau)\1_{\{\sigma(y)\geq \tau\}}-\E_{x_0}\sum_{k=0}^{\tau\wedge \sigma(y)-1}g(S_k).
\]
Using Lemma \ref{lem:renewalRepresentation} and the estimate for $|f|$ we arrive at
\[  \limsup_{y \to \infty} \E_{x_0}f(S_{\sigma(y)})\1_{\{\sigma(y)<\tau\}}
\leq C(\limsup_{y \to \infty} \E_{x_0}S_{\sigma(y)}\1_{\{\sigma(y)<\tau\}}+1)=C(U(x_0)+1).\] Since $h$ is a bounded function on $(-\infty, 0]$ we infer $\lim_{y\to\infty}\me_{x_0} h(S_\tau)\1_{\{\sigma(y)\geq \tau\}}=\me_{x_0} h(S_\tau)=:C_1\in (-\infty,\infty)$ by the Lebesgue dominated convergence theorem. Invoking the L\'{e}vy monotone convergence theorem yields
\[  \lim_{y \to \infty} \E_{x_0} \sum_{k=0}^{\tau \wedge \sigma(y)-1}g(S_k)= \E_{x_0} \sum_{k=0}^{\tau-1} g(S_k)\]
By assumption, the right-hand side is infinite.
We conclude that necessarily
\[
  f(x_0) \leq C(U(x_0)+1)+C_1- \E_{x_0} \sum_{k=0}^{\tau-1}g(S_k)= - \infty,
\]
a contradiction which completes the proof of Lemma \ref{lem:firstStep}.
\end{proof}

\begin{lemma}\label{dri2}
Assume that condition \eqref{eqn:sumCondition} holds for some
$x>0$ and that the function $g$ is dRi on $\mr^+$. Then
\eqref{eqn:sumCondition} holds for each $x>0$ and
\begin{equation}\label{imp123}
\lim_{x\to\infty}\Big(\me_x \sum_{k=0}^{\tau-1}g(S_k)\Big)/x=0.
\end{equation}
\end{lemma}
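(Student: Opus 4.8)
The plan is to use the integral representation from Lemma \ref{lem:renewIntegral} and exploit direct Riemann integrability of $g$ together with the linear growth of the renewal functions $U$ and $V$ recorded in \eqref{asymp}. Write $r(x) := \int_{[0,\infty)} g(x+z)\,\dd V(z)$ for $x\ge 0$, so that by Lemma \ref{lem:renewIntegral} one has $\me_x \sum_{k=0}^{\tau-1} g(S_k) = \int_{[0,x]} r(x-y)\,\dd U(y)$ for $x>0$. The first task is to show that $g$ dRi on $\mr^+$ implies that $r$ is bounded and that $r(x) \to 0$ as $x\to\infty$; this is precisely the content of the appendix (an application of the key renewal theorem for the nonarithmetic strict ascending ladder height walk, whose distribution is nonarithmetic by Lemma \ref{don}(c), or an elementary convolution estimate in the arithmetic case). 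Indeed $r(x) = \me g(x+S_{\sigma(0)}\text{-type quantity})$ integrated against $\dd V$, and $V(z) \sim z/\nu$; dRi of $g$ gives $\int_{[0,\infty)} g(u)\,\dd u<\infty$ so that, by the key renewal theorem, $r(x)\to \nu^{-1}\int_0^\infty g(u)\,\dd u$ only if we had not killed, but since we integrate $g(x+z)$ over $z\ge 0$ and push $x\to\infty$, what actually survives is $\lim_{x\to\infty} r(x)=0$. Once $r$ is bounded, say $r\le B$, and $r(x)\to 0$, the finiteness of $\me_x\sum_{k=0}^{\tau-1}g(S_k)$ for one $x>0$ combined with monotonicity in $x$ of the integral $\int_{[0,x]} r(x-y)\,\dd U(y)$ — which must be checked, or circumvented — upgrades \eqref{eqn:sumCondition} to all $x>0$; alternatively, $\int_{[0,x]} r(x-y)\,\dd U(y)\le B\,U(x)<\infty$ directly gives finiteness for every $x>0$ with no appeal to the hypothesis, so in fact \eqref{eqn:sumCondition} is automatic here.

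For the main assertion \eqref{imp123}, split the integral at a level $A>0$: for $x>A$,
\[
\frac1x\int_{[0,x]} r(x-y)\,\dd U(y) = \frac1x\int_{[0,x-A]} r(x-y)\,\dd U(y) + \frac1x\int_{(x-A,x]} r(x-y)\,\dd U(y).
\]
In the first integral the argument $x-y$ of $r$ ranges over $[A,x]$, so $r(x-y)\le \sup_{u\ge A} r(u)=:\epsilon_A$, giving a bound $\epsilon_A\,U(x)/x$, which by \eqref{asymp} tends to $\epsilon_A/\mu$; since $r(u)\to 0$, $\epsilon_A\to 0$ as $A\to\infty$. For the second integral, bound $r$ by $B$ to get $B\,(U(x)-U(x-A))/x$; by \eqref{asymp}, $U(x)-U(x-A) \to A/\mu$ as $x\to\infty$ (the renewal function $U$ has asymptotically linear increments of slope $\mu^{-1}$ over bounded windows), hence this term tends to $0$ as $x\to\infty$ for each fixed $A$. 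Therefore $\limsup_{x\to\infty} x^{-1}\me_x\sum_{k=0}^{\tau-1}g(S_k)\le \epsilon_A/\mu$ for every $A$, and letting $A\to\infty$ yields \eqref{imp123}.

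The main obstacle, and the place where care is needed, is the first step: establishing that dRi of $g$ forces $r$ to be bounded and to vanish at infinity. Boundedness is delicate because $V$ is not a finite measure, so one cannot just pull a sup of $g$ out; the right tool is that dRi functions are, by definition, dominated by a dRi majorant built from the sup over dyadic blocks, and $\int_{[0,\infty)} \bar g_h(u)\,\dd V(u) < \infty$ for such a block-sup majorant $\bar g_h$ precisely because $V(u+h)-V(u)$ is bounded uniformly in $u$ (a consequence of $\me\xi^2<\infty$, or just of the renewal theorem for the ladder height walk). The vanishing $r(x)\to 0$ then follows from the key renewal theorem applied to $\int g(x+z)\,\dd V(z)$ together with the observation that the "mass at infinity" is $\lim_{x\to\infty}\int_{[0,\infty)} g(x+z)\,\dd V(z)=0$ by dominated convergence against the dRi majorant. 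I would isolate both facts as a citation to the appendix (the Abelian/Tauberian and dRi lemmas referenced as Lemma \ref{integr} and the directly Riemann integrability material of Section \ref{sect:dri}), keeping the body of the proof of Lemma \ref{dri2} to the clean splitting argument above.
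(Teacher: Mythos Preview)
Your approach is essentially the paper's: represent $\E_x\sum_{k=0}^{\tau-1}g(S_k)$ via Lemma \ref{lem:renewIntegral}, show $r$ is finite everywhere and $r(x)\to 0$ from direct Riemann integrability of $g$ together with bounded increments of $V$, then deduce \eqref{imp123} by a Ces\`aro-type argument (your explicit splitting at level $A$ is exactly what the paper calls a ``Stolz--Ces\`aro like argument'', and your claim $U(x)-U(x-A)\to A/\mu$ is stronger than needed --- only $(U(x)-U(x-A))/x\to 0$ is used, and that follows from \eqref{asymp} alone). One point to tighten: your justification of $r(x)\to 0$ via ``dominated convergence against the dRi majorant'' does not work as stated, since there is no $V$-integrable dominating function independent of $x$; the paper instead bounds directly
\[
r(x)\le V(1)\sum_{n\ge \lfloor x\rfloor+1}\sup_{n-1\le y<n}g(y)
\]
using subadditivity of $V$, which is the tail of the convergent series $\overline\sigma(1)$ --- but this is precisely your ``block-sup majorant plus bounded $V$-increments'' idea made explicit, so you have the right ingredients.
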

\begin{proof}
We start by recalling that $\mu,\nu\in (0,\infty)$ according to
\eqref{moments}. By Lemma \ref{lem:renewIntegral},
\begin{equation}\label{repre}
\E_x\sum_{k=0}^{\tau- 1}g(S_k)=\int_{[0,\,x]}r(x-y)\dd U(y),\quad x>0,
\end{equation}
where $r(x) = \int_{[0,\,\infty)} g(x+z)\dd V(z)$ for $x\geq 0$. Thus, if
\begin{equation*}
\lim_{x\to\infty}r(x)=0,
\end{equation*}
then using the first part of \eqref{asymp} relation
\eqref{imp123} follows with the help of a simple (Stolz-Cesàro
like) argument.

By Lemma \ref{integr} (b), we infer $r(x)<\infty$ for each $x\geq
0$ which implies that \eqref{eqn:sumCondition} holds for each
$x>0$. The function $V$ is subadditive on $\mr$ (see, for
instance, formula (6.3) in \cite{Iksanov:2016}). Armed with this
we obtain, for each $x\geq 0$,
\begin{multline}\label{dri3}
r(x)\leq \int_{[\lfloor x\rfloor,\,\infty)}g(y){\rm d}V(y-x)\leq
\sum_{n\geq \lfloor x\rfloor+1}\sup_{n-1\leq y<n}g(y)
(V(n-x)-V(n-1-x))\\ \leq V(1)\sum_{n\geq \lfloor
x\rfloor+1}\sup_{n-1\leq y<n}g(y),
\end{multline}
where $z\mapsto \lfloor z\rfloor$ is the floor function. Since $g$
is dRi on $\mr^+$ and thereupon $$\overline{\sigma}(1)=\sum_{n\geq
1}\sup_{n-1\leq y<n}g(y)<\infty,$$ the right-hand side converges
to $0$ as $x\to\infty$.
\end{proof}

\begin{lemma} \label{lem:simple}
Assume that condition \eqref{eqn:sumCondition} holds for each $x>0$ and that $g$ is dRi on $\mr^+$. Then the function $f$ defined by
\[f(x):=\E_x h(S_\tau) - \E_x \sum_{k=0}^{\tau-1}g(S_k),\quad x\in\mr\]
is a solution to \eqref{eqn:problem}, and $\lim_{x \to \infty}(f(x)/x)= 0$.
\end{lemma}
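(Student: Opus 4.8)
The plan is to split $f=f_1-f_2$ with
\[f_1(x):=\E_x h(S_\tau),\qquad f_2(x):=\E_x\sum_{k=0}^{\tau-1}g(S_k),\qquad x\in\mr,\]
and to handle the two pieces separately. Since $\me\xi=0$ and the distribution of $\xi$ is nondegenerate, the walk oscillates, so $\tau<\infty$ $\P_x$-a.s.\ for every $x$; as $h$ is bounded and $S_\tau\le 0$, the function $f_1$ is bounded by $\sup_{y\le 0}|h(y)|$. By hypothesis, $f_2(x)<\infty$ for each $x>0$, while $f_2(x)=0$ and $f_1(x)=h(x)$ for $x\le 0$ (there $\tau=0$ under $\P_x$, the sum is empty, and $S_\tau=S_0=x$). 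Thus $f$ is a well-defined real-valued function satisfying the boundary condition in \eqref{eqn:problem}.

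The core is the functional equation $f(x)=\E f(x+\xi)-g(x)=\E_x f(S_1)-g(x)$ for $x>0$, which I would obtain by conditioning on $\calG_1:=\sigma(\xi_1)$. The key point is that, $\P_x$-a.s.\ when $x>0$,
\[\tau=1+\tau',\qquad \tau':=\inf\{m\in\mn_0\colon S_{1+m}\le 0\},\]
since $S_0=x>0$ does not trigger the stopping. By the Markov property, given $\calG_1$ the process $(S_{1+m})_{m\ge 0}$ is a copy of $S$ started from $S_1$, hence $\E_x[f_1(S_1)\mid\calG_1]=\E_x[h(S_\tau)\mid\calG_1]$ and, reindexing $j=1+m$,
\[\E_x\Big[f_2(S_1)\,\Big|\,\calG_1\Big]=\E_x\Big[\sum_{j=1}^{\tau-1}g(S_j)\,\Big|\,\calG_1\Big].\]
Taking $\E_x$ (the tower property applies because $g\ge 0$) yields $\E_x f_1(S_1)=f_1(x)$ and $\E_x f_2(S_1)=\E_x\sum_{j=1}^{\tau-1}g(S_j)=f_2(x)-g(x)$, using $\tau\ge 1$ and that $S_0=x$ is deterministic under $\P_x$; in particular $0\le\E_x f_2(S_1)\le f_2(x)<\infty$, so $\E_x f(S_1)$ is well defined. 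Subtracting the two identities gives $\E_x f(S_1)=f(x)+g(x)$, which is the required equation on $(0,\infty)$.

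Sublinearity is then immediate: for $x>0$, $|f(x)|\le\sup_{y\le 0}|h(y)|+f_2(x)$, the first term is bounded, and $f_2(x)/x\to 0$ as $x\to\infty$ is precisely \eqref{imp123} of Lemma \ref{dri2}, whose hypotheses ($g$ dRi and \eqref{eqn:sumCondition} for some, hence each, $x>0$) are in force. Finally, to fit $f$ into the class considered in \eqref{eqn:problem} one should check right-continuity of $f$ on $(0,\infty)$: starting from the representation $f_2(x)=\int_{[0,\,x]}r(x-y)\,\dd U(y)$ of Lemma \ref{lem:renewIntegral}, with $r(x)=\int_{[0,\,\infty)}g(x+z)\,\dd V(z)$ finite for all $x\ge 0$ by Lemma \ref{dri2}, right-continuity of $f_2$ follows by dominated convergence as $x\downarrow x_0$ (using that a c\`{a}dl\`{a}g $g$ is locally bounded, that $g$ is dRi, and the subadditivity and linear growth of $V$), together with right-continuity of $x\mapsto\E_x h(S_\tau)$. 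I expect this last verification and the bookkeeping around the Markov decomposition --- in particular justifying the interchange of $\E_x$ with the random-length sum and the identity $\tau=1+\tau'$ --- to be the only delicate points; the functional equation and the sublinear bound are then direct substitutions.
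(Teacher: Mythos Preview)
Your proof is correct and follows essentially the same route as the paper: the Markov/first-step decomposition to get the functional equation, boundedness of $h$ plus Lemma~\ref{dri2} for sublinearity, and dominated convergence (via the representation of Lemma~\ref{lem:renewIntegral}, the dRi property of $g$, and subadditivity of $V$) for right-continuity of $f_2$. The paper carries out the right-continuity step in full detail but otherwise matches your outline closely.
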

\begin{proof}
Let us check that $f$ is a solution to \eqref{eqn:problem} which exhibits at most linear growth. Using the fact that, under $\mmp_x$, $x\in\mr$, $(S_k-S_1)_{k\in\mn}$ has the same distribution as $(S_n-x)_{n\in\mn_0}$ and is independent of $S_1$ and that, by definition,
\begin{equation}\label{inter1}
\tau= 0\quad \P_x-\text{a.s.\ ~ for}~ x\leq 0,
\end{equation}
we obtain
\[
  f(x) = \E f(x + \xi)-g(x),\quad x>0.
\]
Also, $$f(x)=h(x),\quad x\leq 0$$ by another appeal to \eqref{inter1} (in particular, $\E_x \sum_{k=0}^{\tau-1}g(S_k)=0$ for $x<0$).

Next, we note that $\lim_{x\to\infty}(f(x)/x)=0$ is a consequence of Lemma \ref{dri2} and boundedness of $h$.

Finally, we show that the function $f$ is right-continuous. By
assumption, $h$ is a right-continuous bounded function. Hence, the
function $x\mapsto \E_x h(S_\tau)$ is right-continuous by the
Lebesgue dominated convergence theorem. To prove right-continuity
of $x\mapsto \E_x \sum_{k=0}^{\tau-1}g(S_k)$ on $(0,\infty)$ we
are going to use representation \eqref{repre}. For $x,y\geq 0$ and $z\in [0,1]$,
\begin{multline*}
g(x+z+y)\leq \sum_{n\geq 1}\sup_{n-1+z\leq y<n+z}g(x+y)\1_{[n-1,
n)}(y)\\ \leq \sum_{n\geq 1}\sup_{n-1\leq y<n+1} g(x+y)\1_{[n-1,
n)}(y)=:L_x(y).
\end{multline*}
Also,
\begin{multline*}
\int_{[0,\,\infty)}L_x(y){\rm d}V(y)=\sum_{n\geq 1}\sup_{n-1\leq
y<n+1} g(x+y)(V(n-)-V((n-1)-))\\\leq V(1)\sum_{n\geq
1}\sup_{n-1\leq y<n+1} g(x+y)<\infty,
\end{multline*}
where the finiteness is secured by the fact that $g$ is dRi and
the penultimate inequality is justified by subadditivity of $V$ on
$\mr$. Hence,
$$\lim_{z\to 0+}r(x+z)=\int_{[0,\,\infty)}\lim_{z\to 0+}
g(x+z+y){\rm d}V(y)=\int_{[0,\,\infty)}g(x+y){\rm d}V(y)=r(x)$$ by
right-continuity of $g$ and the Lebesgue dominated convergence
theorem. According to the proof of Lemma \ref{dri2}, $\lim_{x\to\infty}r(x)=0$, whence $$r(x+z-y)\1_{[0,\,x+z]}(y) \leq c\1_{[0,\,x+1]}(y)$$
for $x\geq 0$, $z\in [0,1]$, $y\in [0,x+z]$ and a constant $c>0$. Thus, we infer
\begin{multline*}
\lim_{z\to 0+}\int_{[0,\,\infty)}r(x+z-y)\1_{[0,\,x+z]}(y){\rm
d}U(y)=\int_{[0,\,\infty)}\lim_{z\to 0+}
r(x+z-y)\1_{[0,\,x+z]}(y){\rm d}U(y)\\=\int_{[0,\,x]}r(x-y){\rm
d}U(y)
\end{multline*}
by another appeal to the Lebesgue dominated convergence theorem.
Thus, right-continuity on $(0,\infty)$ has been proved. By a
similar reasoning, one can also check that $\lim_{x\to 0+}\E_x
\sum_{k=0}^{\tau-1}g(S_k)=0$.
\end{proof}
\begin{remark}\label{arith}
Assume that the distribution of $\xi$ is $d$-arithmetic for $d>0$
and the function $g$ is not dRi. Then it can be checked (details are simple, hence omitted) that if
\eqref{eqn:sumCondition} holds for some $x>0$, then $\sum_{n\geq
0}g(x+nd)<\infty$ and thereupon
$\lim_{n\to\infty}(H(x+nd)/(nd))=0$, where $H(y):=\me_y \sum_{k=0}^{\tau-1}g(S_k)$ for $y>0$. However,
this does not seem to imply
${\lim\sup}_{x\to\infty}(H(x)/x)<\infty$ which is needed for
proving that $f$ defined in Lemma \ref{lem:simple} satisfies
${\lim\sup}_{x\to\infty}(|f(x)|/x)<\infty$, let alone
$\lim_{x\to\infty}(H(x)/x)=0$. On the other hand, the assumption
that $g$ is dRi comfortably ensures the latter.
\end{remark}

We now turn to the proof of the main result of the section.
\begin{proof}[Proof of Theorem \ref{prop:gen}]
In view of Lemma \ref{lem:firstStep} it remains to consider the
case when condition \eqref{eqn:sumCondition} holds for some $x>0$
and $g$ is dRi on $\mr^+$. Then, by Lemma \ref{dri2},
\eqref{eqn:sumCondition} holds for each $x>0$. Hence, Lemma
\ref{lem:simple} applies and ensures that $x\mapsto \E_x h(S_\tau)-\E_x \sum_{k=0}^{\tau-1}g(S_k)$, $x\in\mr$ is a solution to
\eqref{eqn:problem} of sublinear growth.

Let $f$ be any solution \eqref{eqn:problem} for which
\eqref{eqn:asy} holds. Lemma \ref{lem:simple} in combination with
the linearity of \eqref{eqn:problem} enables us to conclude that
the function $\hat f$ defined by
\[
  \hat{f}(x):=f(x) - \E_x h(S_\tau) + \E_x \sum_{k=0}^{\tau-1}g(S_k),\quad x\in\mr
\]
satisfies
\[
  \begin{cases}
    \hat f(x) = \E \hat f(x+\xi)\1_{\{\xi>-x\}}, &\text{ if } x> 0,\\
    \hat f(x) = 0, &\text{ if } x \leq 0\\
    \limsup_{x \to \infty}(|\hat f(x)|/x) < \infty.
  \end{cases}
\]
In other words, $\hat{f}$ is a harmonic function of at most linear
growth for the random walk $S$ killed upon entering $(-\infty,
0]$. Therefore, the proof is completed by an application of Lemma
\ref{lem:harmonicKilled}.
\end{proof}

As a consequence of Theorem \ref{prop:gen}, we conclude that subharmonic functions of at most linear growth for the killed random walk exhibit exactly a linear growth rate (at least along the closure of the group generated by the support of the distribution of $\xi$).
\begin{corollary}\label{cor:linearGrowth}
Assume that the distribution of $\xi$ is $d$-arithmetic for $d\geq 0$ and that the function $g$ is dRi on $\mr^+$.
Let $f$ be a solution to \eqref{eqn:problem} satisfying \eqref{eqn:asy} and $\kappa(\cdot)$
the corresponding $d$-periodic function from \eqref{impo1}.  Then
\[
\begin{cases}
\lim_{n \to \infty} (f(x + nd)/nd) = \kappa(x)/\mu \quad \text{for all}~ x\in [0,d), & \text{ if } d > 0\\
\lim_{x \to \infty} (f(x)/x) = \kappa/\mu, & \text{if}~ d=0.
  \end{cases}
\]
Furthermore, if $\kappa(x)=\kappa$ for all $x\in\mr$ in the case $d>0$, then $\lim_{x \to \infty} (f(x)/x) = \kappa/\mu$.
\end{corollary}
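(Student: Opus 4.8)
The plan is to obtain all three assertions directly from the representation \eqref{impo1} furnished by Theorem \ref{prop:gen}, by tracking which of its three terms carries the linear growth. First I would record that, since $f$ is a solution to \eqref{eqn:problem} satisfying \eqref{eqn:asy}, Lemma \ref{lem:firstStep} forces condition \eqref{eqn:sumCondition} to hold for every $x>0$; together with the hypothesis that $g$ is dRi on $\mr^+$, this lets me apply Lemma \ref{dri2} to conclude that $x^{-1}\E_x\sum_{k=0}^{\tau-1}g(S_k)\to 0$ as $x\to\infty$. I would also note that $h$ being bounded on $(-\infty,0]$ and $S_\tau\leq 0$ $\mmp_x$-a.s.\ give $|\E_x h(S_\tau)|\leq\sup_{z\leq 0}|h(z)|$ uniformly in $x>0$, so this middle term is $o(x)$ as well. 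Hence in \eqref{impo1} only the term $\kappa(x)U(x)$ contributes to the growth rate of $f$.

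Next I would treat the nonarithmetic case $d=0$: here $\kappa(\cdot)$ is a constant $\kappa$, so dividing \eqref{impo1} by $x$ and using the first relation in \eqref{asymp} (namely $U(x)/x\to\mu^{-1}$) yields $f(x)/x\to\kappa/\mu$. For $d>0$ and fixed $x\in[0,d)$, I would evaluate \eqref{impo1} at the point $x+nd$ and divide by $nd$; $d$-periodicity gives $\kappa(x+nd)=\kappa(x)$, and $U(x+nd)/(nd)=\bigl(U(x+nd)/(x+nd)\bigr)\bigl((x+nd)/(nd)\bigr)\to\mu^{-1}$ by \eqref{asymp}, while the other two terms are $o(nd)$ by the bounds just noted; letting $n\to\infty$ gives $f(x+nd)/(nd)\to\kappa(x)/\mu$. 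For the final assertion, if $\kappa$ is constant even when $d>0$, then \eqref{impo1} takes exactly the same form as in the case $d=0$, so dividing by $x$ and letting $x\to\infty$ through the continuum gives $f(x)/x\to\kappa/\mu$.

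I do not expect a genuine obstacle here: the statement is essentially a bookkeeping corollary of \eqref{impo1}, \eqref{asymp}, Lemma \ref{dri2}, and the boundedness of $h$. The only point that needs a moment's attention is verifying that the hypotheses of Lemma \ref{dri2} are in force — in particular that \eqref{eqn:sumCondition} holds for all $x>0$ — which, as indicated above, follows from the assumed existence of the solution $f$ via Lemma \ref{lem:firstStep}.
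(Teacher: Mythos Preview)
Your proposal is correct and follows essentially the same route as the paper, which simply cites Theorem \ref{prop:gen}, Lemma \ref{lem:simple} and \eqref{asymp}; you have just unpacked Lemma \ref{lem:simple} into its ingredients (Lemma \ref{dri2} plus the boundedness of $h$). One minor simplification: you do not need Lemma \ref{lem:firstStep} separately, since the first sentence of Theorem \ref{prop:gen} already gives \eqref{eqn:sumCondition} for each $x>0$ from the mere existence of $f$, and Lemma \ref{dri2} only requires it for some $x>0$ anyway.
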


This result follows from Theorem \ref{prop:gen}, Lemma \ref{lem:simple} and \eqref{asymp}.

\section{Proofs related to tail behavior}\label{proofs1}
Recall that the random variable $Z$ is the a.s.\ limit of the
derivative martingale $(Z_n, \F_n)_{n\in\mn_0}$.
In this section we prove Theorems \ref{expa0} and \ref{expa} by investigating the asymptotic behavior of the Laplace transform of $Z$ near zero and using Tauberian theorems given in the Appendix.

\subsection{Decomposition of \texorpdfstring{$Z$}{Z}}\label{decomp}
Let $\eta_1$, $\eta_2,\ldots$ be independent copies of a random
variable $\eta$ which are independent of $\cZ=\sum_{j=1}^N
\delta_{X_j}$. The mapping which maps the distribution of $\eta$
to the distribution of $\sum_{i=1}^N e^{-X_i}\eta_i$ is an
instance of {\it smoothing transform}. The distribution of $\eta$
is a {\it fixed point} of this smoothing transform if
$$\eta\overset{{\rm d}}{=}\sum_{i=1}^N e^{-X_i}\eta_i,$$ where $\overset{{\rm d}}{=}$ denotes
equality of distributions. Recent advances concerning fixed points
of general smoothing transforms can be found in
\cite{Alsmeyer+Mallein:2019+, Alsmeyer+Meiners:2013,
Iksanov+Meiners:2015, Meiners+Mentemeier:2017}, the list is far
from being complete.

Denote by $\phi$ the Laplace transform of $Z$, that is,
\[
\phi(s) = \E e^{-sZ},\quad s\geq 0.\] Below we provide
an a.s.\ decomposition of $Z$ over the individuals of any fixed
generation. The distributional version of formula \eqref{fixpoint}
in the case $k=1$ shows that the distribution of $Z$ is a fixed
point of the particular smoothing transform. This fact
reformulated in terms of $\phi$ reads
\begin{equation}\label{eqn:fixedpointDistributional}
\phi(s) = \E\prod_{i=1}^N \phi(s e^{-X_i}), \quad s \geq 0.
\end{equation}

As a preparation, we recall from Lemma 3.1 in \cite{Shi:2015}
that, under \eqref{eq:m(1)=1} and \eqref{eqn:boundarycase}, we
have
\begin{equation}\label{div}
\lim_{n \to \infty} \inf_{|u|=n} S(u) = \infty\quad \text{ a.s.},
\end{equation}
that is, the minimal position of the $n$th generation individuals
diverges to $\infty$ as $n\to\infty$. Here, the infimum is defined
to be $+\infty$ if the population dies out by the nth generation.
Further, for $u,v\in\I$ we write $v>u$ if $u$ is an ancestor of
$v$, that is, $u=u_1\ldots u_k$ and $v=u_1\ldots u_k\ldots u_n$
for some $k\in\mn_0$ and integer $n>k$. Given $u\in \I$, set
$$Z_n(u):=\sum_{|v|=n+|u|,\, v>u}e^{-(S(v)-S(u))}(S(v)-S(u)),\quad n\in\mn,$$ so that
$(Z_n(u))_{n\in\mn}$ is a version of $(Z_n)_{n\in\mn}$. Then
$$Z(u):=\lim_{n\to\infty} Z_n(u)$$ is the a.s.\ limit of the derivative martingale defined on the
subtree of $\I$ rooted at $u$. For fixed $k\in\mn$, the random
variables $(Z(u))_{|u|=k}$ are independent copies of $Z$ which are
also independent of $(S(u))_{|u|=k}$.
\begin{lemma} \label{lem:fixpoint} Assume that Condition $\mathcal{S}$ holds.
Then, for each $k\in\mn$,
\begin{equation}\label{fixpoint}
Z=\sum_{|u|=k}e^{-S(u)}Z(u)\quad\text{{\rm a.s.}}
\end{equation}
\end{lemma}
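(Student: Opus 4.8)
The plan is to prove the identity for $k=1$ and then obtain every $k$ by iteration: applying the $k=1$ case inside the subtree rooted at each first-generation individual $u$ (which is an independent copy of the whole BRW) gives $Z(u)=\sum_{v>u,\,|v|=2}e^{-(S(v)-S(u))}Z(v)$ a.s., and substituting this into $Z=\sum_{|u|=1}e^{-S(u)}Z(u)$ yields the case $k=2$; induction on $k$ completes this reduction. Note that $(Z(u))_{|u|=1}$ are i.i.d.\ copies of $Z\ge 0$, independent of $(S(u))_{|u|=1}$, so the right-hand side of \eqref{fixpoint} is an a priori well-defined element of $[0,\infty]$.

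For $k=1$ I would start from the elementary pathwise decomposition of $Z_n$ over the first generation. Writing $W_m(u)$ and $Z_m(u)$ for the additive and derivative martingales at generation $m$ of the subtree rooted at $u$ with $|u|=1$, and splitting $S(v)=S(u)+(S(v)-S(u))$ for $v>u$, one gets, for every $n\ge 1$,
\[
Z_n=\sum_{|u|=1}e^{-S(u)}\big(S(u)\,W_{n-1}(u)+Z_{n-1}(u)\big)=\sum_{|u|=1}\ \sum_{v>u,\,|v|=n}e^{-S(v)}S(v)\qquad\text{a.s.}
\]
By \eqref{div} there is an a.s.\ finite random index $N_0$ with $S(v)>0$ for all $|v|=n$ whenever $n\ge N_0$, so for $n\ge N_0$ each summand $\sum_{v>u,\,|v|=n}e^{-S(v)}S(v)$ is nonnegative. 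Since $Z_n\to Z$, $Z_{n-1}(u)\to Z(u)$ and, the subtree being in the boundary case, $W_{n-1}(u)\to 0$ a.s., Fatou's lemma applied pathwise (with respect to counting measure in $u$, from the index $N_0(\omega)$ on) gives $Z\ge\sum_{|u|=1}e^{-S(u)}Z(u)$ a.s.

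The reverse inequality is the delicate point, because $(Z_n)$ fails to be uniformly integrable ($\E Z_n=0$ for all $n$ while $Z\ge0$ is nondegenerate, so $\E Z>0$) and one cannot pass $n\to\infty$ through the possibly infinite sum above. I would circumvent this using the truncated derivative martingales $Z_n^{(\beta)}:=\sum_{|u|=n,\ S(u_1\cdots u_j)\ge-\beta\ \forall\,1\le j\le n}e^{-S(u)}(S(u)+\beta)$, $\beta\ge 0$, which are \emph{nonnegative} submartingales with $\sup_n\E Z_n^{(\beta)}<\infty$ (of order $\beta$ as $\beta\to\infty$, by a renewal argument in the spirit of Lemma \ref{lem:renewalRepresentation}), converge a.s.\ to some $Z^{(\beta)}$, are nondecreasing in $\beta$, and satisfy $Z^{(\beta)}\uparrow Z$ a.s.\ as $\beta\to\infty$ (standard facts underlying the construction of $Z$, cf.\ \cite{Aidekon:2013}). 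The same first-generation splitting gives, a.s.,
\[
Z_n^{(\beta)}=\sum_{|u|=1,\ S(u)\ge-\beta}e^{-S(u)}\,Z_{n-1}^{(\beta+S(u))}(u),
\]
where $Z_m^{(b)}(u)$ is the barrier-$(-b)$ truncated derivative martingale at generation $m$ of the subtree rooted at $u$ (the barrier is shifted by $S(u)$ because the path from the root to a descendant of $u$ must stay above $-\beta$). Thanks to nonnegativity the limit $n\to\infty$ can now be taken through the sum by dominated convergence, with the $n$-independent dominating function $\sum_{|u|=1,\ S(u)\ge-\beta}e^{-S(u)}\sup_m Z_m^{(\beta+S(u))}(u)$; this is finite a.s.\ by a Kolmogorov three-series argument: one treats the a.s.\ finitely many $u$ with $S(u)\le 0$ directly, and for $S(u)>0$ one uses Doob's maximal inequality $\P\{\sup_m Z_m^{(b)}(u)\ge t\mid\F_1\}=O(b/t)$ together with the a.s.\ finiteness, under Condition $\mathcal S$, of $W_1$, of $\widetilde W_1$ and of $\sum_{|u|=1}e^{-S(u)}S(u)^2$ ($=\sum_i e^{-X_i}X_i^2$, of mean $\sigma^2$). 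One obtains $Z^{(\beta)}=\sum_{|u|=1,\ S(u)\ge-\beta}e^{-S(u)}Z^{(\beta+S(u))}(u)$ a.s.\ for each $\beta$; letting $\beta\to\infty$ along integers and using $Z^{(\beta)}\uparrow Z$ on the left and monotone convergence on the right (the index set increases to $\{|u|=1\}$ and $Z^{(b)}(u)\uparrow Z(u)$ for each fixed $u$) yields $Z=\sum_{|u|=1}e^{-S(u)}Z(u)$ a.s.

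I expect the main obstacle to be exactly the interchange of $\lim_{n\to\infty}$ with the first-generation sum when $\P\{N=\infty\}>0$; the non-negativity of the truncated martingales is precisely what makes it tractable, via Doob's inequality and the three-series theorem. If one prefers a shorter route, one may instead invoke the distributional fixed-point property $Z\overset{\mathrm d}{=}\sum_{|u|=1}e^{-S(u)}Z(u)$ (equivalently \eqref{eqn:fixedpointDistributional}), which can be read off from the $\beta\to\infty$ limit above or taken from the smoothing-transform literature referenced in Section \ref{decomp}: combined with the Fatou inequality already proved and the elementary fact that $X\ge Y$ a.s.\ with $X\overset{\mathrm d}{=}Y$ forces $X=Y$ a.s., it closes the argument as well.
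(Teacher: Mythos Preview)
Your proposal is correct but follows a genuinely different route from the paper's.

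The paper does \emph{not} iterate from $k=1$; it works directly with generation $k$, using the truncated martingale
\[
D_n^{(\alpha)}=\sum_{|u|=n}e^{-S(u)}R(S(u)+\alpha)\1_{\{S(u_1)\ge-\alpha,\ldots,S(u)\ge-\alpha\}},
\]
where $R$ is the renewal function of the strict descending ladder heights. Because $R$ is \emph{harmonic} for the killed walk, $(D_n^{(\alpha)})$ is an honest nonnegative martingale and, under Condition $\mathcal S$, converges a.s.\ \emph{and in $L^1$} to $D^{(\alpha)}$. The generation-$k$ decomposition $D_n^{(\alpha)}=\sum_{|u|=k}D_{n-k}^{(\alpha)}(u)$ then gives, via Fatou, $D^{(\alpha)}\ge\sum_{|u|=k}D^{(\alpha)}(u)$; but both sides have the \emph{same finite expectation} $R(\alpha)$ (by the $L^1$ convergence and the many-to-one lemma), so the inequality is an equality. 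On the event $A_\alpha$ one identifies $D^{(\alpha)}={\tt m}^{-1}Z$ and $D^{(\alpha)}(u)={\tt m}^{-1}e^{-S(u)}Z(u)$, and $\P(A_\alpha)\to1$ finishes the proof.

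By contrast, you truncate with the linear function $x\mapsto x+\beta$ instead of $R$, so your $Z_n^{(\beta)}$ is only a \emph{submartingale}; it is $L^1$-bounded but you do not get $L^1$ convergence for free, and consequently you cannot upgrade Fatou to equality by simply matching means. This is exactly why you need the extra machinery (Doob's weak maximal inequality plus a conditional three-series argument, resting on the a.s.\ finiteness of $\sum_{|u|=1}e^{-S(u)}S(u)^2$) to justify dominated convergence across the first-generation sum. That argument is correct in outline, and the alternative ``$X\ge Y$ a.s.\ and $X\overset{\mathrm d}{=}Y$ imply $X=Y$ a.s.'' is also valid, though note that in the paper the distributional identity \eqref{eqn:fixedpointDistributional} is \emph{deduced from} Lemma~\ref{lem:fixpoint} rather than available beforehand, so you would need to establish it independently. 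The paper's choice of $R$ instead of $x+\beta$ is precisely what buys the $L^1$ convergence and lets one avoid all of this: a nice illustration of why the renewal-harmonic truncation is the ``right'' one.
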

\begin{remark}
In the situation where $N<\infty$ a.s.\ this fact was proved in
Theorem 5.1 of \cite{Biggins+Kyprianou:2004}. However, we work
under weaker assumptions, in particular,  the case
$\mmp\{N=\infty\}>0$ is not excluded in the present work. Since we
did not find an appropriate reference in the literature, we give a complete
proof.
\end{remark}
\begin{proof}
Let $(\tau^\ast_k)_{k\in\mn_0}$ be the sequence of strict
descending ladder epochs, that is, $\tau^\ast_0:=0$,
$\tau^\ast_1:=\inf\{j\in\mn: S_j<0\}$ and
$\tau^\ast_k:=\inf\{j>\tau^\ast_{k-1}: S_j<S_{\tau^\ast_{k-1}}\}$
for $k\geq 2$. Put $$R(x):=\sum_{n\geq 0}\mmp\{-S_{\tau^\ast_n}\leq
x\},\quad x\in\mr,$$ that is, $R$ is the renewal function for the
standard random walk formed by strict descending ladder heights.
Note that $R(x)=0$ for $x<0$. For fixed $\alpha\geq 0$, put
$$D_n^{(\alpha)}=\sum_{|u|=n} e^{-S(u)}R(S(u)+\alpha)\1_{\{S(u_1)\ge -\alpha, S(u_1u_2)\ge-\alpha,\ldots, S(u_1\ldots u_n)\ge -\alpha\}},\quad n\in\mn_0$$ and let $A_\alpha:=\{S(u) \ge
-\alpha\quad\text{for all ever born individuals}~u\}$ denote the
event of nonextinction of the branching random walk killed below
$-\alpha$.  According to Lemma A.1 in \cite{Aidekon:2013}, the
sequence $(D_n^{(\alpha)},\mathcal{F}_n)_{n\in\mn_0}$ forms a
nonnegative martingale called {\it truncated martingale}.
Furthermore, by Proposition A.3 in \cite{Aidekon:2013},
$D_n^{(\alpha)}$ converges a.s.\ and in $L^1$ as $n\to\infty$ to a
random variable that we denote by $D^{(\alpha)}$, and
$D^{(\alpha)}>0$ a.s.\ on $A_\alpha$.

By Lemma \ref{don} (a,b),
\begin{equation}\label{rx}
\lim_{x\to\infty}x^{-1}R(x)=(-\me S_{\tau_1^\ast})^{-1}=:{\tt
m}^{-1}>0.
\end{equation}
This together with \eqref{div} enables us to conclude that, a.s.\ on $A_\alpha$,
\begin{equation}\label{x}
D^{(\alpha)} = \lim_{n \to \infty} \sum_{|u|=n}e^{-S(u)}R(S(u) +
\alpha)= \lim_{n \to \infty} \sum_{|u|=n} {\tt
m}^{-1}e^{-S(u)}S(u) = {\tt m}^{-1} Z
\end{equation}
(we note in passing that these random variables are
not equal a.s.\ because $\me D^{(\alpha)}<\infty$, whereas $\me
Z=\infty$). We extend the definition of the truncated martingale
to the
subtrees rooted at $u\in \I$ as follows
$$D_n^{(\alpha)}(u):= \sum_{|v|=n+|u|,\, v>u} e^{-S(v)}R(S(v)+\alpha)
\1_{\{S(uv_1) \ge -\alpha, S(uv_1v_2)\ge-\alpha,\ldots, S(uv_1\ldots v_n)\ge -\alpha\}}, \quad n\in\mn.$$ Fix $u\in\I$. The sequence $(e^{S(u)}
D_n^{(\alpha)}(u))_{n\in\mn}$ has the same distribution as $(D_{n,\ast}^{(S(u)+\alpha)})_{n\in\mn}$, where, while for
$\beta\geq 0$, $(D_{n,\ast}^{(\beta)})_{n\in\mn}$ is a
distributional copy of $(D_n^{(\beta)})_{n\in\mn}$ which is
independent of $S(u)$; for $\beta<0$, $D_{n,\ast}^{(\beta)}=0$ for
each $n\in\mn$. From this we conclude that $D_n^{(\alpha)}(u)$
converges a.s.\ and in $L^1$, as $n\to\infty$,
to a random variable $D^{(\alpha)}(u)$, say which satisfies
\begin{equation}\label{y}
D^{(\alpha)}(u) = {\tt m}^{-1}e^{-S(u)}Z(u) \qquad \mbox{a.s. on }
A_\alpha
\end{equation}
and
\begin{equation}\label{eq: ee}
\E\big(D^{(\alpha)}(u)|{\mathcal F}_{|u|}\big) =
\E\big(D_n^{(\alpha)}(u)|{\mathcal F}_{|u|}\big) =
e^{-S(u)}R(S(u)+\alpha)\qquad \mbox{a.s.}
\end{equation}
Decomposing $D_n^{(\alpha)}$ over the $k$th generation yields
$$D_n^{(\alpha)} = \sum_{|u|=k} D_{n-k}^{(\alpha)}(u),\quad n>k\quad\text{a.s.}$$
By Fatou's lemma, for $k\in\mn$,
\[
  D^{(\alpha)} \geq \sum_{|u|=k} D^{(\alpha)}(u) \geq 0 \quad \text{a.s.}
\]
Also, for $k\in\mn$,
\[
  \E D^{(\alpha)}=R(\alpha)=\E R(S_k+\alpha) = \E \sum_{|u|=k}e^{-S(u)}R(S(u)+\alpha)=\E
  \sum_{|u|=k}D^{(\alpha)}(u),
\]
where the first and the last equalities follow from \eqref{eq: ee}, the second equality expresses the known fact that $R$ is a
harmonic function of the random walk $S$ killed upon entering
$(-\infty, 0)$ (see Lemma 1 in \cite{Tanaka:1989}), and the third
equality is a consequence of Lemma \ref{many}. The last two
centered formulae together ensure that, for $k\in\mn$,
\[
  D^{(\alpha)}= \sum_{|u|=k} D^{(\alpha)}(u)\quad \text{a.s.}
\]
Using \eqref{x} and \eqref{y} yields, for each $\alpha\geq 0$ and
$k\in\mn$,
\[
  Z = \sum_{|u|=k} e^{-S(u)}Z(u)\qquad \text{ a.s. on } A_\alpha,
\]
hence just a.s.\ because $(A_\alpha)_{\alpha\geq 0}$ is a
nondecreasing family of events with
$\lim_{\alpha\to\infty}\mmp(A_\alpha)=1$.
\end{proof}

\subsection{Asymptotic behavior of the Laplace transform}\label{LT}

Recall that $\phi$ denotes the Laplace transform of $Z$ and put
\[
D(x) = e^x (1-\phi(e^{-x})),\quad x\in\mr.
\]
\begin{remark}\label{55}
Assume that Condition $\mathcal{S}$ holds. Then, according to Lemma
5.1 in \cite{Alsmeyer+Mallein:2019+},
\begin{equation}\label{eqn:boundforphi}
\sup_{x>0}\frac{D(x)}{1+x}< \infty.
\end{equation}
Theorems \ref{impo} and \ref{expaLT} given below in this section can be thought of as a strengthening of
\eqref{eqn:boundforphi}.
\end{remark}

Following Durrett and Liggett \cite{Durrett+Liggett:1983} and many
their successors we put, for $x\in\mr$,
\begin{align*}
G(x) &= \E \sum_{i=1}^N e^{-X_i}D(x + X_i)-D(x)=\me D(x+\xi)-D(x)\\
&= e^x \E\Big( \prod_{i=1}^N \phi(e^{-x-X_i}) - 1 + \sum_{i=1}^N
\big(1 - \phi(e^{-x-X_i})\big)\Big),
\end{align*}
where $\xi$ is a random variable with distribution defined
in \eqref{xidef}. To obtain the second equality we have used
\eqref{eqn:fixedpointDistributional}. For later needs, we note the following.
\begin{lemma}\label{durligg}

\noindent (a) $G(x)\geq 0$ for $x\in\mr$;

\noindent (b) the function $x\mapsto e^{-x} G(x)$ is nonincreasing
on $\mr$.
\end{lemma}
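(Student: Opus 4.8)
The plan is to read off everything from the last of the three displayed expressions for $G$, namely
\[
G(x)=e^x\,\E\!\left(\prod_{i=1}^N\phi(e^{-x-X_i})-1+\sum_{i=1}^N\bigl(1-\phi(e^{-x-X_i})\bigr)\right),\qquad x\in\mr,
\]
which comes from the fixed point relation \eqref{eqn:fixedpointDistributional}. The idea is to reduce both assertions to two elementary facts about the deterministic function $g(\mathbf a):=\prod_i a_i-1+\sum_i(1-a_i)$ of a tuple $\mathbf a=(a_i)$ with entries in $(0,1]$, applied with $a_i=\phi(e^{-x-X_i})$. The only property of $\phi$ that enters is that, being the Laplace transform of the nonnegative random variable $Z$, it is nonincreasing with values in $(0,1]$; in particular no convexity and no differentiation of $\phi$ is needed.

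\emph{Part (a).} Setting $b_i:=1-a_i\in[0,1)$, the bound $g(\mathbf a)\ge 0$ is exactly the Weierstrass product inequality $\prod_i(1-b_i)\ge 1-\sum_i b_i$ for $b_i\in[0,1]$ (with both sides allowed to be $+\infty$), which I would prove by a one‑line induction on the number of factors and a monotone passage to the limit in the case $N=\infty$. Hence the integrand above is nonnegative for every realization of $\cZ$, and since $e^x>0$ we conclude $G(x)\ge 0$.

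\emph{Part (b).} The key observation is that, for each fixed $j$,
\[
g(\mathbf a)=a_j\Bigl(\prod_{i\ne j}a_i-1\Bigr)+\sum_{i\ne j}(1-a_i),
\]
an affine function of $a_j$ with nonpositive slope $\prod_{i\ne j}a_i-1\le 0$; thus $g$ is nonincreasing in each coordinate (the identity and the sign persist for $N=\infty$, the only nontrivial case $\sum_i(1-a_i)=\infty$ forcing $g=+\infty$ regardless of $a_j$, and the countably‑many‑coordinates version following by a routine monotone limiting argument). Now fix $x_1<x_2$. Since $\phi$ is nonincreasing and $e^{-x_1-X_i}\ge e^{-x_2-X_i}$, we get $\phi(e^{-x_1-X_i})\le\phi(e^{-x_2-X_i})$ for every $i$, so feeding these tuples into the coordinatewise monotonicity of $g$ gives, for each realization of $\cZ$,
\[
g\bigl((\phi(e^{-x_1-X_i}))_i\bigr)\ \ge\ g\bigl((\phi(e^{-x_2-X_i}))_i\bigr)
\]
as elements of $[0,+\infty]$. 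Taking expectations and comparing with the displayed formula for $G$ yields $e^{-x_1}G(x_1)\ge e^{-x_2}G(x_2)$, i.e.\ $x\mapsto e^{-x}G(x)$ is nonincreasing.

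\emph{Where care is needed.} There is no genuine obstacle; the only delicate point is the bookkeeping when $\mmp\{N=\infty\}>0$, where the products and sums may be infinite — but, as noted, both the Weierstrass inequality and the coordinatewise monotonicity of $g$ survive in $[0,+\infty]$ — together with the (trivial) interchange of expectation and the pointwise inequality, which is immediate once the integrands are known to be nonnegative by Part (a). It is also worth recording that $G(x)$ is finite for every $x$: on $(-\infty,0]$ one has $D(y)=e^y(1-\phi(e^{-y}))\le e^y\le 1$, while on $[0,\infty)$ the bound $D(y)\le C(1+y)$ holds by \eqref{eqn:boundforphi}, so $D(y)\le C'(1+|y|)$ on $\mr$ and $\E D(x+\xi)<\infty$.
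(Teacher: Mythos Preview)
Your proof is correct and is essentially the same approach as the one the paper points to: the paper does not give its own argument but cites Lemma~2.4 in Durrett--Liggett \cite{Durrett+Liggett:1983}, remarking that the proof there extends verbatim from deterministic $N$ to the present setting. Your write-up is precisely that extension: the Weierstrass product inequality for part~(a) and the coordinatewise monotonicity of $g(\mathbf a)=\prod_i a_i-1+\sum_i(1-a_i)$ for part~(b), together with the bookkeeping needed when $\mmp\{N=\infty\}>0$.
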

These two properties were given in Lemma 2.4 of
\cite{Durrett+Liggett:1983} under the assumption that $N$ is
deterministic. However, the proof of the cited result extends
verbatim to the more general situation treated here.

From the definition of $G$ and formula
\eqref{eqn:boundforphi} it follows that $D$ satisfies
\begin{equation}\label{eqn:harmonicD}
\begin{cases}
D(x) = \E D(x+ \xi)-G(x),\quad x\in\mr;&\\
\sup_{x \in \mr} \frac{D(x)}{1+|x|} < \infty.
\end{cases}
\end{equation}
In particular, $D$ is a nonnegative subharmonic function of at
most linear growth for the random walk $S$. Therefore, invoking Theorem \ref{prop:gen} we can give an
alternative formula for $D$. Below we use the notation introduced
in Section \ref{nota}.
\begin{theorem}\label{impo}
Assume that Condition $\mathcal{S}$ holds. Then, for each $x>0$,
\begin{equation}\label{repr}
D(x) = \mu U(x) + \E_x D(S_\tau)- \E_x\sum_{k=0}^{\tau-1} G(S_k)
\end{equation}
and
\begin{equation}\label{limit1} D(x)~\sim~
x,\quad x\to\infty.
\end{equation}
Also, if the distribution of $\xi$ is nonarithmetic, then the limit $\lim_{x\to\infty} \me_x D(S_\tau)$ exists and is finite. If the distribution of $\xi$ is $d$-arithmetic for $d>0$, then the limit does not exist but
\begin{equation}\label{period}
\lim_{x\to\infty} (\me_x D(S_\tau)-c_{11}(x))=0
\end{equation}
for a bounded $d$-periodic function $c_{11}(\cdot)$ which is not a constant.
\end{theorem}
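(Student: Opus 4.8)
The plan is to identify $D$ as a subharmonic solution of the half-line Poisson problem of Theorem~\ref{prop:gen}, to read the three-term representation off from there, to pin down the periodic prefactor by an independent proof that $D(x)\sim x$ based on the truncated martingale of \cite{Aidekon:2013}, and finally to treat the term $\me_x D(S_\tau)$ by renewal theory.

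\emph{Step 1: the representation.} By \eqref{eqn:harmonicD}, $D$ solves \eqref{eqn:problem}--\eqref{eqn:asy} with $g=G$ and $h:=D|_{(-\infty,0]}$. Here $h$ is bounded (since $0\le D(x)=e^{x}(1-\phi(e^{-x}))\le 1$ for $x\le 0$) and right-continuous (continuity of $\phi$ on $[0,\infty)$); $G$ is nonnegative by Lemma~\ref{durligg}(a) and continuous because $D$ is continuous and, by $0\le D(y)\le C(1+|y|)$ and $\me|\xi|<\infty$, dominated convergence applies to $x\mapsto\me D(x+\xi)$. Since $D$ is a solution, Lemma~\ref{lem:firstStep} forces \eqref{eqn:sumCondition} for every $x>0$. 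To obtain direct Riemann integrability of $G$ on $\mr^{+}$ I would combine this with Lemma~\ref{lem:renewIntegral}: writing $r(w):=\int_{[0,\infty)}G(w+z)\,\dd V(z)$ one has $\me_x\sum_{k=0}^{\tau-1}G(S_k)=\int_{[0,\,x]}r(x-y)\,\dd U(y)$, and since $U$ has positive mass at the origin the finiteness of the left-hand side forces $r(w)<\infty$ for all $w\ge 0$; the subadditivity bound $r(w)\le V(1)\sum_{n\ge\floor{w}+1}\sup_{n-1\le y<n}G(y)$ from the proof of Lemma~\ref{dri2} then gives $\sum_{n\ge 0}\sup_{n\le y<n+1}G(y)<\infty$, which together with continuity of $G$ is dRi. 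Theorem~\ref{prop:gen} now provides a $d$-periodic right-continuous $\kappa(\cdot)$ with $D(x)=\kappa(x)U(x)+\me_x D(S_\tau)-\me_x\sum_{k=0}^{\tau-1}G(S_k)$ for $x>0$ (note $h(S_\tau)=D(S_\tau)$ since $S_\tau\le 0$), while Lemma~\ref{dri2} gives $\me_x\sum_{k=0}^{\tau-1}G(S_k)=o(x)$.

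\emph{Step 2: $D(x)\sim x$, hence $\kappa\equiv\mu$.} Put $s=e^{-x}$, so $sD(x)=1-\phi(s)=\me(1-e^{-sZ})$. Recall from the proof of Lemma~\ref{lem:fixpoint} that the truncated martingale $D^{(\alpha)}_n$ of \cite{Aidekon:2013} converges in $L^1$ to $D^{(\alpha)}$ with $\me D^{(\alpha)}=R(\alpha)$, that $R(\alpha)\sim\alpha/{\tt m}$, and that $Z={\tt m}D^{(\alpha)}$ a.s.\ on $A_\alpha$ (this is \eqref{x}). A many-to-one estimate (Lemma~\ref{many}) bounds $\mmp(A_\alpha^{c})$ by the expected number of individuals that are the first on their ancestral line to enter $(-\infty,-\alpha)$, namely $\me\,e^{S_T}$ with $T$ the first passage of $S$ below $-\alpha$; by recurrence $T<\infty$ a.s.\ and $S_T<-\alpha$, so $\mmp(A_\alpha^{c})\le e^{-\alpha}$. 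Splitting $\me(1-e^{-sZ})$ over $A_\alpha,A_\alpha^{c}$ and using $1-e^{-u}\le u$ gives $sD(x)\le s{\tt m}R(\alpha)+e^{-\alpha}$, whence, taking $\alpha=x$, $\limsup_{x\to\infty}D(x)/x\le 1$. For the lower bound one discards $A_\alpha^{c}$ and uses $1-e^{-u}\ge u(1-u/2)$ on $\{Z\le{\tt m}K\}$:
\[
  sD(x)\ \ge\ \me\big[(1-e^{-sZ})\1_{A_\alpha}\big]\ \ge\ s\Big(1-\tfrac{s{\tt m}K}{2}\Big)\Big({\tt m}R(\alpha)-{\tt m}\,\me\big[D^{(\alpha)}\1_{A_\alpha^{c}}\big]-\me\big[Z\1_{A_\alpha}\1_{\{Z>{\tt m}K\}}\big]\Big);
\]
choosing $K=K(x)$ with $s{\tt m}K(x)\to 0$ (e.g.\ $K(x)=e^{x/2}$) and $\alpha=x$ yields $\liminf_{x\to\infty}D(x)/x\ge 1$, \emph{provided} the last two terms are $o(x)$. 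Thus $D(x)\sim x$. Feeding this back into the representation of Step~1 and using $U(x)\sim x/\mu$ (by \eqref{asymp}), $\me_x D(S_\tau)=O(1)$ and $\me_x\sum G(S_k)=o(x)$ forces $\kappa(x)/\mu\to 1$; since $\kappa$ is $d$-periodic this means $\kappa\equiv\mu$, which is \eqref{repr}, and $D(x)\sim x$ is \eqref{limit1}.

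\emph{Step 3: the term $\me_x D(S_\tau)$.} Under $\mmp_x$ the random variable $-S_\tau\ge 0$ is the overshoot below $0$ of the centred walk started at $x$; by the renewal theorem its law converges, as $x\to\infty$, to the stationary overshoot law of the weak descending ladder height process when $\xi$ is nonarithmetic, and to a law $\nu_x$ on $(-\infty,0]$ depending only on $x\bmod d$ when $\xi$ is $d$-arithmetic. Since $D$ is bounded and continuous on $(-\infty,0]$ (with $D(0-)=D(0)=1-\me e^{-Z}$ and $D(-\infty)=0$), it follows that $\lim_{x\to\infty}\me_x D(S_\tau)$ exists and is finite in the nonarithmetic case, while in the $d$-arithmetic case $\me_x D(S_\tau)-c_{11}(x)\to 0$ for the bounded $d$-periodic right-continuous function $c_{11}(x):=\int_{(-\infty,0]}D\,\dd\nu_x$; $c_{11}$ is not constant because $\nu_x$ genuinely depends on $x\bmod d$ (its support, hence its largest atom, varies with $x\bmod d$) while $D$ is strictly increasing.

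\emph{Main obstacle.} The hard point is the lower bound in Step~2, i.e.\ showing that $\me[D^{(\alpha)}\1_{A_\alpha^{c}}]$ and $\me[Z\1_{A_\alpha}\1_{\{Z>{\tt m}e^{\alpha/2}\}}]$ are $o(\alpha)$ as $\alpha\to\infty$; informally, that conditionally on the whole branching random walk staying above $-\alpha$ the derivative-martingale limit $Z$ is typically of order $e^{\alpha}$. It is here, and essentially only here, that the finer moment hypotheses \eqref{eqn:variance}--\eqref{eqn:integrability} of Condition~$\mathcal S$ are genuinely needed; the upper bound, by contrast, uses nothing beyond $R(\alpha)\sim\alpha/{\tt m}$ and the exponential estimate for $\mmp(A_\alpha^{c})$.
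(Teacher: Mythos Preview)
Your Step~1 contains a logical slip in the direct Riemann integrability argument: the bound $r(w)\le V(1)\sum_{n\ge\floor{w}+1}\sup_{n-1\le y<n}G(y)$ goes the wrong way for your purpose---finiteness of the left-hand side does not force finiteness of the right-hand side. The paper instead bounds $r(h_0)$ from \emph{below} by $\sum_{n\ge 1}\inf_{(n-1)h_0\le y<nh_0}G(y)\,(V((n-1)h_0)-V((n-2)h_0))$, uses Blackwell's theorem to see that the $V$-increments converge to a positive constant, and so obtains $\underline{\sigma}(h_0)<\infty$; Lemma~\ref{dri} (which exploits that $x\mapsto e^{-x}G(x)$ is nonincreasing) then upgrades this to dRi.

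The more serious issue is your Step~2. Your plan is to prove $D(x)\sim x$ directly via truncated-martingale estimates and then read off $\kappa\equiv\mu$, but the lower bound you need---that $\me[D^{(\alpha)}\1_{A_\alpha^{c}}]$ and $\me[Z\1_{A_\alpha}\1_{\{Z>{\tt m}e^{\alpha/2}\}}]$ are $o(\alpha)$ as $\alpha\to\infty$---is essentially a statement about the tail of $Z$ (or, equivalently, about the uniform integrability profile of $D^{(\alpha)}$), which is precisely what the whole theorem is meant to establish; you acknowledge this as the ``main obstacle'' and do not resolve it. The paper avoids this circularity entirely: once the representation $D(x)=\kappa(x)U(x)+o(x)$ is in hand (Step~1), it identifies $\kappa$ algebraically via the Biggins--Kyprianou argument. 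Using Lemma~\ref{lem:fixpoint} to write $\E(e^{-\lambda Z}\mid\mathcal F_n)=\prod_{|u|=n}\phi(\lambda e^{-S(u)})=\prod_{|u|=n}(1-\lambda D(S(u)-\log\lambda)e^{-S(u)})$, and letting $n\to\infty$ with $\inf_{|u|=n}S(u)\to\infty$, one gets $-\log(e^{-\lambda Z})=\lim_n\sum_{|u|=n}\lambda D(S(u)-\log\lambda)e^{-S(u)}=(\lambda/\mu)\kappa(-\log\lambda)Z$ a.s.\ (the periodicity of $\kappa$ and $S(u)\in d\Z$ make $\kappa(S(u)-\log\lambda)=\kappa(-\log\lambda)$). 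Since $\mmp\{Z>0\}>0$, this forces $\kappa(-\log\lambda)=\mu$ for every $\lambda>0$, hence $\kappa\equiv\mu$. Then $D(x)\sim x$ follows from Corollary~\ref{cor:linearGrowth}, not the other way round.

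Your Step~3 is in the right spirit; the paper makes it precise by writing $\me_x D(S_\tau)=\int_{[0,x)}m(x-y)\,\dd U(y)$ with $m(x):=\me D(-(-S_{\tau_1}-x))\1_{\{-S_{\tau_1}\ge x\}}$, checking $m$ is dRi (it is dominated by $\mmp\{-S_{\tau_1}\ge x\}$, which is nonincreasing and integrable), and applying the key renewal theorem. That $c_{11}(\cdot)$ is not constant is argued from the nonincrease of $x\mapsto e^{-x}m(x)$, not from monotonicity of $D$ as you suggest.
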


Theorem \ref{expa0} is an immediate consequence of
\eqref{limit1} and Corollary 8.1.7 in
\cite{Bingham+Goldie+Teugels:1989} which states that relations
\eqref{limit1} and \eqref{limit2} are equivalent.
\begin{theorem}\label{expaLT}
Assume that Condition $\mathcal{S}_{\rm na}$ holds. Then Condition $\mathcal{S}^\ast$ ensures
\begin{equation}\label{limit123}
D(x)=\mu U(x)+c_1+o(1)=x+c_2+o(1),\quad x\to\infty,
\end{equation}
where $c_2=c_1+(2\mu)^{-1}\me S_\tau^2=c+1-\gamma$, $\gamma$ is the Euler-Mascheroni constant, and $c$ is the same as in Theorem \ref{limit3}. Conversely, the second equality in \eqref{limit123} entails Condition $\mathcal{S}^\ast$.
\end{theorem}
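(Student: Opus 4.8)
\emph{Overall strategy.} The plan is to read the asymptotics of $D$ off the three-term representation of Theorem~\ref{impo} and to show that Condition~$\mathcal{S}^\ast$ is exactly what makes its last two terms converge. Throughout write $\psi(s):=1-\phi(s)=\E(1-e^{-sZ})\in[0,1]$, so that $D(x)=e^x\psi(e^{-x})$. By Theorem~\ref{impo}, for $x>0$,
$$D(x)=\mu U(x)+\E_x D(S_\tau)-\E_x\sum_{k=0}^{\tau-1}G(S_k),$$
and I would treat the three summands separately. For the first, combine the exact identity $\mu U(x)=x-\E_x S_\tau$ (established in the proof of Lemma~\ref{lem:renewalRepresentation}) with the classical fact that, for a centred walk of finite variance, the undershoot $-S_\tau$ converges in distribution under $\P_x$ to the absolutely continuous stationary law with density $t\mapsto\mu^{-1}\P\{-S_{\tau_1}>t\}$; condition~\eqref{mom3} is, by the standard moment transfer between a centred walk and its descending ladder heights, equivalent to $\E S_{\tau_1}^2<\infty$ (namely $\E S_{\tau_1}^2<\infty\iff\E\xi_-^3<\infty$, and $\E\xi_-^3=\E\sum_{i=1}^Ne^{-X_i}(X_i)_-^3$ by~\eqref{xidef}), and it upgrades the weak convergence to $\E_x(-S_\tau)\to(2\mu)^{-1}\E S_{\tau_1}^2$ via uniform integrability, so that $\mu U(x)=x+(2\mu)^{-1}\E S_{\tau_1}^2+o(1)$. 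For the second summand, $D$ is bounded on $(-\infty,0]$ (since $D(x)=e^x\psi(e^{-x})\le e^x\to0$ as $x\to-\infty$ and $D$ is c\`adl\`ag), so Theorem~\ref{impo} already gives $\E_x D(S_\tau)\to c_{11}$ finite in the nonarithmetic case. For the third, Lemma~\ref{lem:renewIntegral} rewrites it as $\int_{[0,x]}r(x-y)\,\dd U(y)$ with $r(y)=\int_{[0,\infty)}G(y+z)\,\dd V(z)$; once $r$ is shown to be dRi on $\mr^+$, the key renewal theorem and~\eqref{asymp} give $\int_{[0,x]}r(x-y)\,\dd U(y)\to\mu^{-1}\int_0^\infty r(y)\,\dd y$, which Fubini turns into $\mu^{-1}\int_0^\infty G(w)V(w)\,\dd w$. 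Adding the three limits yields the first equality in~\eqref{limit123} with $c_1=c_{11}-\mu^{-1}\int_0^\infty G(w)V(w)\,\dd w$, then the second with $c_2=c_1+(2\mu)^{-1}\E S_{\tau_1}^2$; the identification $c_2=c+1-\gamma$ is an Abelian computation using $\psi(s)=s^2\int_0^\infty e^{-sz}\big(\int_0^z\P\{Z>y\}\,\dd y\big)\,\dd z$, $\int_0^\infty e^{-u}\log u\,\dd u=-\gamma$, and the identity $\int_0^x\P\{Z>y\}\,\dd y=\E Z\1_{\{Z\le x\}}+x\P\{Z>x\}$ together with~\eqref{Z}.

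\emph{The key estimate.} The heart of the matter is the equivalence of~\eqref{eq:2.4} and~\eqref{eq:2.5} with ``$r$ is dRi and $\int_0^\infty r<\infty$'', which I would reduce to a two-sided estimate of $G$ on $\mr^+$. By Lemma~\ref{durligg}(b), $y\mapsto e^{-y}G(y)$ is nonincreasing, hence $\int_n^{n+1}G\asymp\sup_{n-1\le t<n}G(t)\asymp G(n)$; arguing as in the proof of Lemma~\ref{dri2} (subadditivity of $V$, $V(n)\lesssim n$), $r$ is dRi once $\sum_{n\ge1}n\sup_{n-1\le t<n}G(t)<\infty$, which by the previous display amounts to $\int_0^\infty yG(y)\,\dd y<\infty$. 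Now $0\le\prod_{i=1}^N(1-\psi(e^{-y-X_i}))-1+\sum_{i=1}^N\psi(e^{-y-X_i})\le\tfrac12\big(\sum_{i=1}^N\psi(e^{-y-X_i})\big)^2\wedge1$, with a matching lower bound of the same order, and $\psi(t)\le\E(tZ\wedge1)\asymp t\log_+(1/t)$ by Theorem~\ref{expa0}; splitting $\sum_i\psi(e^{-y-X_i})$ by the sign of $X_i$, the part with $X_i\ge0$ is controlled by $e^{-y}(1+y)(W_1^++\widetilde W_1)$ and that with $X_i<0$ by $\sum_{i:X_i<0}\min(e^{-y-X_i}(1+y),1)$. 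Plugging these into $G(y)\asymp e^y\,\E[(\sum_i\psi(e^{-y-X_i}))^2\wedge1]$ and summing against $n$ (after the substitution $m=e^n/n$) shows the $X_i\ge0$ contribution is summable iff~\eqref{eq:2.4} holds, while separating the small and large parts of $\sum_{i:X_i<0}(1+X_i-X_{\min})e^{X_{\min}-X_i}$ as in~\eqref{eq:2.5} reduces the $X_i<0$ contribution to~\eqref{eq:2.5}; condition~\eqref{mom3} plays no role here, it enters only through $\mu U(x)$.

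\emph{Converse.} Assuming the second equality in~\eqref{limit123}, I would rearrange the three-term identity and use $\mu U(x)=x+\E_x(-S_\tau)$ and $\E_xD(S_\tau)\to c_{11}$ to get $\int_{[0,x]}r(x-y)\,\dd U(y)=\E_x(-S_\tau)+(c_{11}-c_2)+o(1)$. A lower bound $\int_{[0,x]}r(x-y)\,\dd U(y)\gtrsim\int_{x/2}^x r(u)\,\dd u$ (valid for large $x$ by Blackwell's theorem, since $r$ is eventually nonincreasing), a companion upper bound $\int_{[0,x]}r(x-y)\,\dd U(y)\lesssim\sum_{m\le x}\sup_{[m,m+1]}r+O(1)$ (subadditivity of $U$), and the refined renewal theorem ($\mu U(x)=x+\mathrm{const}+o(1)\iff\E S_{\tau_1}^2<\infty$) should together force both $\int_0^\infty r<\infty$ and $\E_x(-S_\tau)=O(1)$; the former gives~\eqref{eq:2.4} and~\eqref{eq:2.5} by the key estimate, and the latter gives $\E S_{\tau_1}^2<\infty$ by Fatou applied to the stationary-law convergence of $-S_\tau$, that is~\eqref{mom3}.

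\emph{Main obstacle.} The genuinely hard step is the two-sided estimate of $G(y)$ as $y\to\infty$ in terms of the point-process functionals of~\eqref{eq:2.4}--\eqref{eq:2.5}: the upper bound must be tight enough to be summable against $n$ precisely under the moment conditions, and the converse needs matching lower bounds, in particular a lower bound on the negative-displacement contribution, which is what forces the somewhat unusual indicator $\{\sum_i(1+X_i-X_{\min})e^{X_{\min}-X_i}\1_{\{X_i<0\}}>C_0\}$ in~\eqref{eq:2.5}. A secondary difficulty is disentangling, in the converse, the growth of $\mu U(x)-x$ from that of $\int_{[0,x]}r(x-y)\,\dd U(y)$ using only nonarithmeticity (no spread-out hypothesis), so that~\eqref{mom3} and the log-moment conditions are extracted separately rather than merely their combined effect on $D(x)-x$.
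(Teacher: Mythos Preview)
Your overall architecture matches the paper's: both use the representation $D(x)=\mu U(x)+\E_xD(S_\tau)-\E_x\sum_{k=0}^{\tau-1}G(S_k)$ from Theorem~\ref{impo}, note that $\E_xD(S_\tau)$ converges (already in Theorem~\ref{impo}), reduce convergence of $\E_x\sum G(S_k)$ to $\int_0^\infty yG(y)\,\dd y<\infty$ via the key renewal theorem (this is the paper's Lemma~\ref{finite}), and handle $\mu U(x)-x$ via the equivalence with $\E\xi_-^3<\infty$ (the paper's Lemma~\ref{don}(d)). You also correctly locate the real work in the two-sided estimate linking $\int_0^\infty yG(y)\,\dd y<\infty$ to \eqref{eq:2.4}--\eqref{eq:2.5}.

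There is one genuine error in your key estimate. The bound $H(y):=\prod_i(1-\psi(e^{-y-X_i}))-1+\sum_i\psi(e^{-y-X_i})\le\tfrac12\big(\sum_i\psi\big)^2\wedge1$ is false: with $N\ge3$ and all $X_i\to-\infty$ each $\psi(e^{-y-X_i})\to1$ and $H(y)\to N-1>1$. What is true is $H(y)\le\min\bigl(\tfrac12(\sum_i\psi)^2,\ \sum_i\psi\bigr)$. The paper uses exactly this dichotomy, splitting on $A_y(\varepsilon)=\{\sum_i\psi<\varepsilon\}$: on $A_y(\varepsilon)$ the quadratic bound yields contributions controlled already by Condition~$\mathcal{S}$; it is on $(A_y(\varepsilon))^c$, where one uses $H(y)\le\sum_i\psi$ together with $\sum_i\psi>\varepsilon$, that the integral $\int y\,e^yH(y)\,\dd y$ produces terms of the type $\E W_1^+\int_1^{Y_2}y^2\,\dd y$ with $Y_2$ the larger root of $yW_1^+=\varepsilon'e^y$ (the paper's Lemma~\ref{lem:lambert}), and \emph{this} is what forces the cubic log-moment in~\eqref{eq:2.4}. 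Replacing this regime by the bound~$1$ would make it look finite under~$\mathcal{S}$ alone, so your $\wedge1$ is not a cosmetic slip but hides the mechanism that generates~$\mathcal{S}^\ast$. The necessity side is likewise more structured than a ``matching lower bound'': the paper first deduces \eqref{mom3} from the second equality, and only then proves necessity of~\eqref{eq:2.5} (its Lemma~\ref{lem:6.13} \emph{assumes}~\eqref{mom3}).

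On the converse disentanglement you flag: the paper's route is a bit cleaner than yours. From the three-term identity and $\E_xD(S_\tau)\to c_{11}$, the second equality gives $(\mu U(x)-x)-\E_x\sum G(S_k)\to c_2-c_{11}$. Lemma~\ref{finite} shows $\E_x\sum G(S_k)$ either converges to a finite limit or tends to $+\infty$; and $\mu U(x)-x=\E_x(-S_\tau)$, being the expected overshoot, satisfies $\liminf_x\E_x(-S_\tau)\ge(2\mu)^{-1}\E S_{\tau_1}^2$ by Fatou and weak convergence to the stationary excess law, so it too either converges (iff $\E S_{\tau_1}^2<\infty$, i.e.\ \eqref{mom3}) or tends to $+\infty$. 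The paper then asserts that convergence of the difference forces each piece to converge; this is the one place where the exposition is terse, and your concern is legitimate, but it is a technical point (two renewal-type quantities built from structurally different integrands $q_1(s)=\E(-S_{\tau_1}-s)_+$ and $r(s)=\int G(s+z)\,\dd V(z)$ cannot cancel exactly), not a conceptual gap. Your proposed lower/upper bounds via Blackwell and subadditivity would work here too.
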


At this point it is convenient to prove Theorem \ref{expa}.
\begin{proof}[Proof of Theorem \ref{expa}]
Assume first that Condition $\mathcal{S}^\ast$ holds. While formula \eqref{limit3} of Theorem \ref{expa} follows from the second equality in \eqref{limit123} and the equivalence (I) $\Rightarrow$ (III) of Lemma \ref{link}, formula \eqref{Z} is a consequence of the fact that \eqref{limit123} entails $\lim_{x\to\infty}(D(x+y)-D(x))=y$ for each $y\in\mr$ and the implication  (ii)$\Rightarrow$ (i) of Lemma \ref{link1}. Assume now that representation \eqref{limit3} holds true. By the implication (III) $\Rightarrow$ (I) of Lemma \ref{link}, the second equality in \eqref{limit123} holds. With this at hand, the necessity of Condition $\mathcal{S}^\ast$ follows from Theorem \ref{expaLT}.
\end{proof}
\begin{remark}
Assume that Conditions $\mathcal{S}$ and $\mathcal{S}^\ast$ hold and that the distribution of $\xi$ is $d$-arithmetic for $d>0$. Although limit relation \eqref{limit123} cannot hold,
there exists a bounded $d$-periodic function $c_1(\cdot)$ which is not a constant such that
\begin{equation}\label{limit124}
D(x) = \mu U(x) + c_1(x) + o(1), \quad x \to \infty.
\end{equation}
Details can be found in Remark \ref{rem:finite}.
\end{remark}

\subsection{Proof of Theorem \ref{impo}}

By Lemma \ref{don} (a), the assumption $\me
\xi_-^2=\me \sum_{i=1}^N e^{-X_i}(X_i)_-^2<\infty$ which is one
half of \eqref{eqn:variance} ensures that $\mu=-\me S_{\tau_1}$ is
finite.

In view of \eqref{eqn:harmonicD}, the function $D$ satisfies \eqref{eqn:asy} and is a continuous solution to \eqref{eqn:problem}
with $h(x)=D(x)$ for $x\leq 0$ and $g = G$. Note that $D$ is
bounded on $(-\infty, 0]$ in view of
$$D(x)=e^x(1-\phi(e^{-x}))\leq e^x\leq 1,\quad x\leq 0,$$ and that $G$ is continuous. Let us
show that $G$ is dRi on $\mr^+$. Let $h_0=d$ if the distribution
of $\xi$ is $d$-arithmetic for $d>0$ and $h_0>0$ be arbitrary if
the distribution of $\xi$ is nonarithmetic. By Theorem \ref{prop:gen}, $\me_x\sum_{k=0}^{\tau-1}G(S_k)<\infty$ for each
$x>0$, hence for
$x=h_0$. Then using Lemma \ref{lem:renewIntegral} with $p=G$ to justify the
first inequality we infer
\begin{equation*}
\infty>r(h_0)=\int_{[h_0,\,\infty)}G(y){\rm d}V (y-h_0)\geq
\sum_{n\geq 1}\inf_{(n-1)h_0\leq
y<nh_0}\,G(y)(V((n-1)h_0)-V((n-2)h_0)).
\end{equation*}
By the Blackwell theorem (see, for instance, formulae (6.8) and
(6.9) in \cite{Iksanov:2016}),
\[\lim_{n\to\infty}(V((n-1)h_0)-V((n-2)h_0))=h_0/\nu \in
(0,\infty).\]
Therefore,
$$\underline{\sigma}(h_0)=h_0\sum_{n\geq
1}\inf_{(n-1)h_0\leq y<nh_0}\,G(y)<\infty.$$
By Lemma \ref{dri},
this together with the fact that the function $x\mapsto
e^{-x}G(x)$ is nonincreasing (see Lemma \ref{durligg}) enables us
to conclude that $g$ is dRi on $\mr^+$.

By Theorem \ref{prop:gen}, there exists a $d$-periodic
function $\kappa(\cdot)$ such that, for all $x>0$,
\[
  D(x) = \kappa (x)U(x) + \E_x D(S_\tau)- \E_x \sum_{k=0}^{\tau-1} G(S_k)=:\kappa (x)U(x)+r(x).
\]
To complete the proof of \eqref{repr} we have to show that
$\kappa(x)=\mu$ for all $x>0$. Relation \eqref{limit1} will then
follow by Lemma \ref{lem:simple} and Corollary \ref{cor:linearGrowth}.

The subsequent argument is close to the discussion in
\cite{Biggins+Kyprianou:1997}, particularly to Theorem 3.1 and
Lemma 5.1 therein.
Using Lemma \ref{lem:fixpoint} yields, for $\lambda>0$,
\begin{align*}
\E(e^{-\lambda Z}| \mathcal{F}_n) &= \E\Big(\exp \Big(-\sum_{|u|=n} \lambda e^{-S(u)} Z(u) \Big) \Big| \mathcal{F}_n \Big)\\
  &= \prod_{|u|=n} \phi(\lambda e^{-S(u)}) = \prod_{|u|=n} ( 1 - \lambda D(S(u) - \log \lambda)e^{-S(u)}).
\end{align*}
Since $\big(\E(e^{-\lambda Z}| \mathcal{F}_n), \F_n\big)_{n\in\mn_0}$ is a right closable martingale,
we have $\lim_{n\to\infty}\E(e^{-\lambda Z}| \mathcal{F}_n)=e^{- \lambda Z}$ a.s.\ and thereupon
\[
  Z =(1/\lambda) \lim_{n \to \infty}\sum_{|u|=n} - \log \big( 1 - \lambda D(S(u) - \log \lambda) e^{-S(u)} \big)\quad\text{a.s.}\]
On the other hand, for all $\lambda>0$ and $u$ with $|u|=n$,
\[
  D(S(u) - \log \lambda) = \kappa(- \log \lambda)U(S(u) - \log \lambda) + r(S(u) - \log \lambda).
\]
We have used the equality $\kappa(S(u)-\log \lambda)=\kappa(- \log \lambda)$ which is trivial if the distribution of $\xi$ is nonarithmetic, for $\kappa(\cdot)$ is then a constant. If the distribution of $\xi$ is $d$-arithmetic for $d>0$, the equality is secured by $S(u) \in d\Z$ a.s. Further, we conclude that, for all $\lambda>0$,
\begin{align*}
\lim_{n \to \infty} \sum_{|u|=n} - \log \left( 1 - \lambda D(S(u) - \log \lambda) e^{-S(u)} \right) &= \lim_{n \to \infty} \sum_{|u|=n} (\lambda/\mu)\kappa(-\log \lambda) S(u) e^{-S(u)}\\
& = (\lambda/\mu) \kappa(-\log \lambda) Z \quad \text{a.s.}
\end{align*}
having utilized $e^{-x}D(x)=1-\phi(e^{-x})\to 0$ as $x\to\infty$,
$\lim_{n \to \infty} \inf_{|u|=n} S(u) = \infty$ a.s., the last
centered formula, Lemma \ref{don}(b) and
$\lim_{x\to\infty}(r(x)/x)=0$ (see Lemma \ref{lem:simple}) for the
first equality. Since $\mmp\{Z>0\}>0$, we infer $\kappa(-\log
\lambda) =\mu$ for all $\lambda >0$.

It remains to investigate the existence of the limit $\lim_{x\to\infty}\me_x D(S_\tau)$.
For $x>0$, put $\tau(-x):=\inf\{k\in\mn: S_k\leq -x\}$ and
$\nu(x):=\inf\{k\in\mn: -S_{\tau_k}\geq x\}$, so that $\nu(x)$ is
the first passage time into $[x,\infty)$ of
$(-S_{\tau_k})_{k\in\mn_0}$. Then
\begin{equation}\label{imp1}
\me_x D(S_\tau)=\me D(x+S_{\tau(-x)}),\quad x>0.
\end{equation}
Since the first passage into $(-\infty, -x]$ of
$(S_k)_{k\in\mn_0}$ can only occur at a weakly descending ladder
epoch we infer
\begin{equation}\label{imp2}
S_{\tau(-x)}=S_{\tau_{\nu(x)}}\quad \text{a.s.}
\end{equation}
Hence, $$\me_x D(S_\tau)=\me
D(-(-S_{\tau_{\nu(x)}}-x))=\int_{[0,\,x)}m(x-y){\rm d}U(y),$$
where $m(x):=\me D(-(-S_{\tau_1}-x))\1_{\{-S_{\tau_1}\geq x\}}$
for $x\geq 0$. Boundedness and continuity of $D$ on $(-\infty, 0]$
implies that $m$ is locally Riemann integrable on $(-\infty, 0]$.
Since $m(x)\leq \mmp\{-S_{\tau_1}\geq x\}$ for $x\geq 0$ and
$x\mapsto \mmp\{-S_{\tau_1}\geq x\}$ is dRi on $\mr^+$ as a
nonincreasing and Lebesgue integrable function (note that
$\int_0^\infty \mmp\{-S_{\tau_1}\geq x\}{\rm d}x=\mu<\infty$) we
conclude that $m$ is dRi on $\mr^+$. It is known (see Lemma
\ref{don} (c)) that the distribution of $S_{\tau_1}$ is
$d$-arithmetic because so is the distribution of $\xi$. Thus,
invoking the key renewal theorem yields
$$\lim_{x\to\infty}\me_x D(S_\tau)=\mu^{-1}\int_0^\infty m(y){\rm d}y=\mu^{-1}\int_0^\infty D(-y)\mmp\{-S_{\tau_1}\geq y\}{\rm d}y<\infty$$
in the nonarithmetic case $d=0$ (see, for instance, Proposition
6.2.3 in \cite{Iksanov:2016}), whereas, for each $x\in [0,d)$,
$$\lim_{n\to\infty}\me_{x+nd}D(S_\tau)=d\mu^{-1}\sum_{k\geq
0}m(x+kd)=:\tilde m(x)<\infty$$ in the arithmetic case $d>0$ (see,
for instance, Proposition 6.2.6 in \cite{Iksanov:2016}). Setting
$c_{11}(x):=\tilde m(d\{x/d\})$ for $x\geq 0$, where $\{y\}$ is
the fractional part of $y$, we observe that  the last limit
relation is equivalent to \eqref{period}. It is clear that
$c_{11}(\cdot)$ is a bounded $d$-periodic function. To see that it
is not a constant (which implies that the limit
$\lim_{x\to\infty}\me_x D(S_\tau)$ does not exist) one may use,
for instance, the fact $x\mapsto e^{-x}m(x)$ is a nonincreasing
function which follows from Lemma \ref{durligg} (b). The proof of Theorem \ref{impo} is complete.

\subsection{Proof of Theorem \ref{expaLT}}\label{proof3}

For the proof of Theorem \ref{expaLT} we need some more preparations.
\begin{lemma}\label{finite}
Assume that Condition $\mathcal{S}_{\rm na}$ holds. Then the limit $\lim_{x\to\infty}\E_x \sum_{k=0}^{\tau-1}G(S_k)$
exists and is finite if, and only if, $\int_0^\infty yG(y){\rm d}y<\infty$.
\end{lemma}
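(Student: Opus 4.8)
The plan is to exploit the renewal-theoretic representation of $H(x):=\E_x\sum_{k=0}^{\tau-1}G(S_k)$ furnished by Lemma \ref{lem:renewIntegral} with $p=G$. Setting $r(w):=\int_{[0,\,\infty)}G(w+z)\,\dd V(z)$ for $w\ge 0$, one has, exactly as in \eqref{repre},
\[
H(x)=\int_{[0,\,x]}r(x-y)\,\dd U(y),\qquad x>0,
\]
so that the asymptotics of $H$ are governed by the key renewal theorem as soon as $r$ is directly Riemann integrable (dRi). Here the distribution of $-S_{\tau_1}$ is nonarithmetic by Lemma \ref{don}(c) and has finite mean $\mu\in(0,\infty)$ by \eqref{moments}, so the conclusion would be $\lim_{x\to\infty}H(x)=\mu^{-1}\int_0^\infty r(w)\,\dd w$, which is finite precisely when $r$ is Lebesgue integrable on $\mr^+$. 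Thus I would split the proof into three parts: (i) $\int_0^\infty r(w)\,\dd w<\infty$ if and only if $\int_0^\infty yG(y)\,\dd y<\infty$; (ii) when this holds, $r$ is dRi on $\mr^+$; (iii) when it fails, $\liminf_{x\to\infty}H(x)=\infty$, so no finite limit can exist.

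For (i) I would apply Tonelli's theorem twice to get
\[
\int_0^\infty r(w)\,\dd w=\int_{[0,\,\infty)}\Big(\int_z^\infty G(u)\,\dd u\Big)\dd V(z)=\int_0^\infty G(u)V(u)\,\dd u,
\]
and then invoke $\lim_{u\to\infty}V(u)/u=\nu^{-1}\in(0,\infty)$ (from \eqref{asymp}), together with local boundedness of $V$ and continuity of $G$, to conclude that $\int_0^\infty G(u)V(u)\,\dd u<\infty$ if and only if $\int_0^\infty uG(u)\,\dd u<\infty$. For (ii) and (iii) the crucial structural input is that $w\mapsto e^{-w}r(w)$ is nonincreasing: indeed $e^{-w}r(w)=\int_{[0,\,\infty)}e^{z}\,\bigl(e^{-(w+z)}G(w+z)\bigr)\,\dd V(z)$ is a nonnegative superposition of the nonincreasing functions $w\mapsto e^{-(w+z)}G(w+z)$, using Lemma \ref{durligg}(b). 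Combined with continuity of $r$ on $(0,\infty)$ (dominated convergence, $G$ being continuous and $r$ finite by Lemma \ref{dri2}), this monotonicity makes the supremum and infimum of $r$ over unit intervals comparable up to the factor $e$, so the upper and lower Darboux-type sums $\overline\sigma(1),\underline\sigma(1)$ of $r$ and $\int_0^\infty r(w)\,\dd w$ are finite or infinite simultaneously. Hence in case (ii) $r$ is dRi (via Lemma \ref{dri}) and the key renewal theorem yields $\lim_{x\to\infty}H(x)=\mu^{-1}\int_0^\infty r(w)\,\dd w<\infty$; while in case (iii), using $U(y+1)-U(y)\to\mu^{-1}>0$ (Blackwell's theorem, valid for the left-continuous $U$ as noted after \eqref{renewal}) one obtains, along integers $K\to\infty$, a bound $H(K)\ge c\sum_{m=1}^{K-A}\inf_{m-1\le w<m}r(w)\to c\,\underline\sigma(1)=\infty$ for suitable constants $c>0$, $A\ge 0$.

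I expect the main obstacle to be step (ii): establishing direct Riemann integrability of $r$ from the mere moment condition $\int_0^\infty yG(y)\,\dd y<\infty$. Without the monotonicity of $w\mapsto e^{-w}r(w)$ inherited from Lemma \ref{durligg}(b), even a strong integrability hypothesis on $G$ would not force $r$ to be dRi (cf.\ the discussion in Remark \ref{rem_im}), so it is essential to keep track of this structural feature throughout. Once it is in hand, the rest is a careful but routine combination of Tonelli's theorem, the asymptotics $V(u)\sim u/\nu$ and $U(x)\sim x/\mu$, and Blackwell's and the key renewal theorems.
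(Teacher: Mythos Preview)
Your proposal is correct and follows essentially the same route as the paper: the renewal representation via Lemma \ref{lem:renewIntegral}, the Tonelli identity $\int_0^\infty r(w)\,\dd w=\int_0^\infty G(u)V(u)\,\dd u$ together with $V(u)\sim u/\nu$, the monotonicity of $w\mapsto e^{-w}r(w)$ inherited from Lemma \ref{durligg}(b) feeding into Lemma \ref{dri}, and the key renewal theorem. The only cosmetic difference is in the divergent case, where the paper applies the key renewal theorem to the truncations $r\1_{[0,b]}$ (which are dRi by local boundedness and a.e.\ continuity of $r$) and lets $b\to\infty$, rather than going through Blackwell's theorem and the lower Darboux sum $\underline\sigma(1)$ as you do.
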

\begin{proof}
The definitions of the strict ascending ladder epochs $(\sigma_n)_{n\in\mn_0}$ and the renewal functions $U$ and $V$ are given in Section \ref{nota}.
We first recall that by Lemma \ref{lem:renewIntegral}
\begin{equation}\label{imp3}
\me_x\sum_{j=0}^{\tau-1}G(S_j)=\int_{[0,\,x]}r(x-y){\rm
d}U(y),\quad x>0,
\end{equation}
where $r(z)=\int_{[0,\,\infty)}G(z+y){\rm d}V(y)=\me\sum_{k\geq
0}G(z+S_{\sigma_k})$ for $z\geq 0$.

Next, we are going to prove that $r$ is Lebesgue integrable on
$\mr^+$ if, and only if, $\int_0^\infty yG(y){\rm d}y<\infty$. By
a multiple use of Fubini's theorem,
\begin{multline*}
\int_0^\infty r(y){\rm d}y=\int_0^\infty \me\sum_{k\geq
0}G(y+S_{\sigma_k}){\rm d}y=\me\sum_{k\geq
0}\int_{S_{\sigma_k}}^\infty G(y){\rm d}y=\sum_{k\geq
0}\me\int_0^\infty G(y)\1_{\{S_{\sigma_k}\leq y\}}{\rm
d}y\\=\int_0^\infty G(y)V(y){\rm d}y,
\end{multline*}
where all the integrals are either convergent or divergent
simultaneously. While by Lemma \ref{don} (a), the condition $\me
\xi_+^2<\infty$ guarantees $\me S_{\sigma_1}<\infty$, we obtain with the help of
Lemma \ref{don}(b) that $\lim_{y\to\infty}y^{-1}V(y)=(\me
S_{\sigma_1})^{-1}\in (0,\infty)$. Hence, the last integral
converges if, and only if, so does the integral $\int_0^\infty
yG(y){\rm d}y$.

Assume now that $\int_0^\infty yG(y){\rm d}y<\infty$, hence
$\int_0^\infty r(y){\rm d}y<\infty$. According to Lemma
\ref{durligg}(b), $x\mapsto e^{-x}G(x)$ is a nonincreasing function on $\mr$. Hence,
so is $x\mapsto e^{-x}r(x)$ which implies that $r$ is a dRi
function on $\mr^+$, see Lemma \ref{dri}. Recalling that the
distribution of $S_{\tau_1}$ is nonarithmetic (because so is the
distribution of $\xi$), that $\mu=-\me S_{\tau_1}<\infty$ and invoking the key renewal theorem we
infer
$$\me_x\sum_{j=0}^{\tau-1}G(S_j)=\int_{[0,\,x]}r(x-y){\rm
d}U(y)~\to~\mu^{-1}\int_0^\infty r(y){\rm d}y\in (0,\infty),\quad x\to\infty.$$

Finally, assume that $\int_0^\infty yG(y){\rm d}y=\infty$, so that
$\int_0^\infty r(y){\rm d}y=\infty$. We already know that the
function $x\mapsto e^{-x}r(x)$ is nonincreasing on $\mr^+$. Hence,
$r$ is locally bounded and a.e.\ continuous on $\mr^+$. This
implies that, for each $b>0$, the function $x\mapsto
r(x)\1_{[0,\,b]}(x)$ is dRi. Now an application of the key renewal
theorem yields
$${\lim\inf}_{x\to\infty}\int_{[0,\,x]}r(x-y){\rm d}U(y)\geq \lim_{x\to\infty}\int_{(x-b,
x]}r(x-y){\rm d}U(y)=\mu^{-1}\int_0^b r(y){\rm d}y.$$ Letting
$b\to\infty$ we conclude that
$\lim_{x\to\infty}\int_{[0,\,x]}r(x-y){\rm d}U(y)=\infty$.
\end{proof}
\begin{remark}\label{rem:finite}
The constant in \eqref{limit123} is given by $c_1=\lim_{x\to\infty}(\me_x D(S_\tau)-\me_x\sum_{k=0}^{\tau-1}G(S_k))$. Assume that Condition $\mathcal{S}$ holds and that the distribution of $\xi$ is $d$-arithmetic for $d>0$. Then, according to Theorem \ref{impo}, the limit $\lim_{x\to\infty}\me_x D(S_\tau)$ does not exist which implies that \eqref{limit123} cannot hold. Under the additional assumption $\int_0^\infty yG(y){\rm d}y<\infty$ a minor modification of the proof of Lemma \ref{finite}, along the lines of the argument leading to \eqref{period}, yields $$\lim_{x\to\infty}\Big(\me_x \sum_{k=0}^{\tau-1}G(S_k)-c_{12}(x)\Big)=0$$ for a bounded $d$-periodic function $c_{12}(\cdot)$ which is not a constant. This in combination with \eqref{period} justifies \eqref{limit124} with $c_1(\cdot):=c_{11}(\cdot)-c_{12}(\cdot)$. Here, $c_{11}(\cdot)$ is the same as in \eqref{period}.
\end{remark}

\begin{lemma}\label{lem:d1}
Assume that Condition $\mathcal S$ holds. Then \eqref{eq:2.4} and  \eqref{eq:2.5} are sufficient for $\int_0^{\infty} yG(y){\rm d}y <\infty$.
\end{lemma}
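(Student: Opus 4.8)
The plan is to bound $G$ from above by a quantity free of the Laplace transform $\phi$ and then integrate against $y\,\dd y$, following the Durrett--Liggett scheme.

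\emph{Step 1 (a pointwise bound on $G$).} Starting from the expansion
\[
G(x)=e^x\,\me\Big(\prod_{i=1}^N\phi(e^{-x-X_i})-1+\sum_{i=1}^N\big(1-\phi(e^{-x-X_i})\big)\Big),
\]
set $a_i:=1-\phi(e^{-x-X_i})=e^{-x-X_i}D(x+X_i)\in[0,1]$ and $\sigma:=\sum_{i=1}^N a_i=e^{-x}\Sigma(x)$, where $\Sigma(x):=\sum_{i=1}^N e^{-X_i}D(x+X_i)$. Then $\prod_i(1-a_i)\ge 1-\sigma$ (this is Lemma~\ref{durligg}(a)) and $\prod_i(1-a_i)\le e^{-\sigma}$, together with the elementary bound $e^{-\sigma}-1+\sigma\le\sigma\wedge(\sigma^2/2)$, yield
\[
0\le G(x)\le\me\Big(\Sigma(x)\wedge\tfrac12 e^{-x}\Sigma(x)^2\Big),\qquad x\ge 0
\]
($\Sigma(x)<\infty$ a.s.\ under Condition~$\mathcal S$, since $\me W_1=1$ by \eqref{eq:m(1)=1} and $\me\widetilde W_1\log_+\widetilde W_1<\infty$).

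\emph{Step 2 (estimating $\Sigma$).} I would use $D(z)\le C(1+z_+)$ for all $z\in\mr$ --- valid by \eqref{eqn:boundforphi} for $z>0$ and by $D(z)=e^z(1-\phi(e^{-z}))\le e^z\le 1$ for $z\le 0$ --- and split $\Sigma(x)$ according to the sign of $X_i$, cutting the negative $X_i$ at the level $-x$, beyond which $D(x+X_i)\le e^{x+X_i}$, so that $e^{-X_i}D(x+X_i)\le e^x$. This produces a decomposition of the form
\[
\Sigma(x)\le C(1+x)W_1^+ +C\,\widetilde W_1 +C(1+x)W_1^- +\sum_{X_i<-x}e^{-X_i}D(x+X_i).
\]
The key point is that on the complement of the event in \eqref{eq:2.5} the constraint $\sum_i(1+X_i-X_{\min})e^{X_{\min}-X_i}\ind{X_i<0}\le C_0$ forces the leftmost cluster to be sparse: within any fixed distance of $X_{\min}$ there are at most boundedly many particles, and $W_1^-/C_0\le e^{-X_{\min}}\le W_1^-$, so $\lvert X_{\min}\rvert\le\log_+W_1^-+C$. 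Hence on that complement the third and fourth summands above are controlled by quantities of the order $W_1^-\log_+W_1^-$, which are integrable under Condition~$\mathcal S$ (as $\me W_1^-(\log_+W_1^-)^2<\infty$ by \eqref{eqn:integrability}), while on the event itself they are retained and ultimately absorbed into \eqref{eq:2.5}. Separating the two cases this way is precisely why only the \emph{conditional} moment \eqref{eq:2.5}, and not an unconditional one, is needed.

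\emph{Step 3 (integration).} By Tonelli's theorem,
\[
\int_0^\infty y\,G(y)\,\dd y\le\me\int_0^\infty y\Big(\Sigma(y)\wedge\tfrac12 e^{-y}\Sigma(y)^2\Big)\,\dd y.
\]
For each realisation I would treat the $(1+y)$-weighted pieces of the bound for $\Sigma$ (the $W_1^\pm$-terms) and the $y$-free piece $\widetilde W_1$ separately: for each piece, cut the $y$-integral at the crossover $y_0$ at which that piece is comparable to $e^{y}$ --- of order the logarithm of the corresponding random prefactor --- bounding the integrand by $y$ times the piece on $[0,y_0]$ and by $\tfrac12 y e^{-y}$ times its square on $(y_0,\infty)$. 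Both resulting integrals are of $\Gamma$-type and evaluate, up to constants, to the prefactor times a power of $\log_+$: a \emph{cube} for the $(1+y)$-weighted terms and a \emph{square} for the $\widetilde W_1$-term. Using $(a+b)(\log_+(a+b))^k\le C_k\big(a(\log_+a)^k+b(\log_+b)^k+a+b\big)$ to disentangle the cross-terms (and $\me W_1^\pm\le\me W_1=1$ to dispose of the linear remainders), one arrives at an upper bound of the form
\[
C\,\me\big[W_1^+(\log_+W_1^+)^3\big]+C\,\me\big[\widetilde W_1(\log_+\widetilde W_1)^2\big]+C\,\me\big[W_1^-(\log_+W_1^-)^3\,\ind{\sum_i(1+X_i-X_{\min})e^{X_{\min}-X_i}\ind{X_i<0}>C_0}\big]+C,
\]
which is finite by \eqref{eq:2.4} and \eqref{eq:2.5}.

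The step I expect to be the real obstacle is Step~2 together with the logarithmic bookkeeping of Step~3: one has to choose the decomposition of $\Sigma(y)$ so that the $\widetilde W_1$-contribution emerges with a square, not a cube, of the logarithm (which is why it must be kept apart from the $(1+y)$-weighted terms), and so that the negative-displacement contribution splits cleanly into a part absorbed by the conditional moment \eqref{eq:2.5} and a part absorbed by Condition~$\mathcal S$ --- the latter requiring careful use of the sparsity of the leftmost cluster on the complement of the event in \eqref{eq:2.5} to tie $W_1^-$, $\lvert X_{\min}\rvert$ and the far-negative sum together. The remaining work --- the $\Gamma$-type integrals and the clean-up inequalities --- is routine.
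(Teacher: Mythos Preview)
Your overall architecture matches the paper's: the bound $G(x)\le\me\big(\Sigma(x)\wedge\tfrac12 e^{-x}\Sigma(x)^2\big)$ is exactly the paper's $I_1/I_2$ split (quadratic regime on $A_y(\varepsilon)$, linear regime on its complement), and your crossover integration in Step~3 is the Lambert-$W$ bookkeeping carried out there. The gap is in Step~2.

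The bound $C(1+x)W_1^-$ for the negative-displacement piece is too coarse. Fed through Step~3 it produces $W_1^-(\log_+W_1^-)^3$ \emph{unconditionally}, and your sparsity observation on the complement $B$ of the event in \eqref{eq:2.5}---that $|X_{\min}|\le\log_+W_1^-+C$---does not downgrade this cube to a square: the summand $(1+x)W_1^-$ simply does not see $X_{\min}$. What is actually needed (and what the paper does) is to keep the negative part as
\[
F(y):=\sum_{i:\,X_i\in[1-y,\,0)}e^{-X_i}(y+X_i)=yW_1^-(y)+\widetilde W_1^-(y),
\]
retaining the cancellation between $yW_1^-$ and $\widetilde W_1^-:=\sum_i X_i e^{-X_i}\1_{\{X_i<0\}}\le 0$. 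On $B$ the sparsity constraint then gives the \emph{pointwise} bounds $F(y)\le C_0 e^{-X_1}$ for $y\le -X_1$ and $e^{-(y+X_1)}F(y)\le C_0(y+X_1+1)$ for $y>-X_1$; it is these refined bounds on $F$ itself---not the bound on $|X_{\min}|$---that bring the integrated contribution on $B$ down to $\me W_1^-(\log_+W_1^-)^2$, which lies in Condition~$\mathcal S$. On $B^c$ one keeps the crude bound $F(y)\le yW_1^-$ and recovers exactly \eqref{eq:2.5}. So your diagnosis that Step~2 is the obstacle is right, but the fix is to refine the bound on the negative-displacement \emph{summand}, not merely to locate $X_{\min}$.
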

Before giving a proof of Lemma \ref{lem:d1} we need an auxiliary  result.
\begin{lemma}\label{lem:lambert}
Let $a,b$ and $\varepsilon$ be real numbers satisfying $a>0$, $b\ge 0$, $c:=\log a-b/a\geq 0$ and $\varepsilon\in (0,1/e)$. The equation
\begin{equation}\label{eq:nd1}
ay-b=\varepsilon e^y
\end{equation}
has two solutions
\begin{equation}\label{eq:nd2}
y_1 = d + b/a\quad\text{and}\quad  y_2=-\log \varepsilon + \log a + \log \big( -\log \varepsilon + \log a - b/a \big) + o(1)
\end{equation}
where $d\in (0,1)$ and the term $o(1)$ converges to $0$ as $\varepsilon e^{-c}$ does so.
\end{lemma}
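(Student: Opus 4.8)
Here is the plan. The key idea is to strip off the parameters $a$ and $b$ by an affine change of variable, turning \eqref{eq:nd1} into a one-parameter equation that can be analysed by pure convexity. Concretely, set $L:=-\log\varepsilon+\log a=\log(a/\varepsilon)$, substitute $y=L+t$ in \eqref{eq:nd1}, and divide by $a$; since $\varepsilon e^{L}=a$, the right-hand side becomes $a e^{t}$ and the equation reduces to $e^{t}=m+t$, where $m:=L-b/a=-\log\varepsilon+\log a-b/a$. The crucial observation is that $m=c-\log\varepsilon=-\log(\varepsilon e^{-c})$, so that $m>1$ always (because $c\ge0$ and $\varepsilon<1/e$), and $m\to\infty$ if and only if $\varepsilon e^{-c}\to0$. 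Thus the whole lemma is equivalent to an asymptotic statement about the two roots of $e^{t}=m+t$ as $m\to\infty$.

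Next I would run the elementary analysis of $g(t):=e^{t}-t-m$: it is strictly convex, has a unique critical point at $t=0$ with $g(0)=1-m<0$, and $g(t)\to+\infty$ as $t\to\pm\infty$, so it has exactly two zeros $t_-<0<t_+$, which give $y_1=L+t_-$ and $y_2=L+t_+$. For the smaller root, $e^{t_-}=m+t_-$ together with $t_-<0$ forces $0<m+t_-=e^{t_-}<1$, hence $t_-\in(-m,1-m)$; since $L-m=b/a$, this yields $y_1\in(b/a,\,b/a+1)$, i.e. $y_1=d+b/a$ with $d\in(0,1)$. For the larger root, $e^{t_+}=m+t_+>m$ gives $t_+>\log m$; on the other hand $g$ is increasing on $(0,\infty)$ and $g(\log m+1)=(e-1)m-\log m-1>0$ for all $m>1$, so $t_+<\log m+1$. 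Feeding $t_+\in(\log m,\log m+1)$ back into the identity $t_+=\log(m+t_+)=\log m+\log(1+t_+/m)$ gives $t_+=\log m+o(1)$ as $m\to\infty$, whence $y_2=L+t_+=-\log\varepsilon+\log a+\log\bigl(-\log\varepsilon+\log a-b/a\bigr)+o(1)$, which is exactly \eqref{eq:nd2}.

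I do not expect a genuine obstacle here; the lemma is soft once the substitution is made. The one point demanding a little care is the claim that the error term is $o(1)$ \emph{uniformly} in $a$ and $b$ as $\varepsilon e^{-c}\to0$: this is automatic precisely because, after the reduction $y=L+t$, the equation $e^{t}=m+t$ depends on $(a,b,\varepsilon)$ only through the single scalar $m=-\log(\varepsilon e^{-c})$, so every bound produced in the bootstrap step (the interval $(\log m,\log m+1)$ for $t_+$ and the resulting $\log(1+t_+/m)=O((\log m)/m)$) is a function of $m$ alone. The remaining bookkeeping — checking that dividing by $a>0$ is legitimate, that the two roots of the reduced equation correspond to the two solutions of \eqref{eq:nd1}, and that the smaller one lands in $(b/a,b/a+1)$ — is routine.
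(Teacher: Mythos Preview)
Your proof is correct and self-contained. The paper takes a shorter but less elementary route: it substitutes $z=-(y-b/a)$ to obtain $ze^{z}=-\varepsilon'$ with $\varepsilon'=(\varepsilon/a)e^{b/a}=\varepsilon e^{-c}\in(0,1/e)$, and then simply cites the known asymptotics of the two real branches of the Lambert $W$ function (Corless et al.) to read off $z_1\in(-1,0)$ and $z_2=\log\varepsilon'-\log(-\log\varepsilon')+o(1)$, which upon back-substitution give \eqref{eq:nd2}. Your substitution $y=L+t$ with $L=\log(a/\varepsilon)$ is equivalent (indeed $z=-(m+t)$ links the two reduced equations), but instead of appealing to external Lambert $W$ asymptotics you carry out the convexity analysis of $g(t)=e^t-t-m$ directly, obtaining the bracketing $t_+\in(\log m,\log m+1)$ and then bootstrapping. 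The payoff of your approach is that it is entirely elementary and makes the uniformity in $(a,b,\varepsilon)$ transparent, since everything depends only on the single parameter $m=-\log(\varepsilon e^{-c})$; the paper's approach is terser but imports the result from the Lambert $W$ literature.
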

\begin{proof}
Set $f(z):= ze^z$ for $z\in\mr$. Changing the variable $z=-(y -b/a)$ transforms \eqref{eq:nd1} into an equivalent form
\begin{equation}\label{eq:tk1}
f(z)= -\varepsilon':= -(\varepsilon/a)e^{b/a} = -\varepsilon e^{-c},
\end{equation}
where $-\varepsilon' \in (-1/e,0)$ by assumption. According to Section 4 in \cite{Lambert}), equation \eqref{eq:tk1} has two solutions $z_1\in (-1,0)$ and
$z_2 =\log (-\varepsilon') - \log(- \log (-\varepsilon')) + o(1)$. Equivalently, equation \eqref{eq:nd1} has two solutions given in \eqref{eq:nd2}.
\end{proof}

\begin{proof}[Proof of Lemma \ref{lem:d1}]
According to \eqref{limit1} there exist constants $\delta, M>0$ such that
\begin{equation}\label{eq:ny2}
\delta   z(-\log z)\le 1- \phi(z) \le M z(-\log z), \quad z\in(0,1/2).
\end{equation}
Without loss of generality we assume that the first generation individuals are ordered $X_i\le X_{i+1}$ for all $i\in \N$, so that $X_1 = X_{\min}$.
Recall from Section \ref{LT} that $G$ is a nonnegative function given by $G(y) = e^y \E H(y)$ for $y\in\mr$, where
\begin{equation}\label{eq:H}
H(y) = \prod_{j=1}^N \phi(e^{-y-X_j}) - 1 +\sum_{j=1}^N(1-\phi(e^{-y-X_j}))\geq 0,\quad y\in\mr.
\end{equation}
Note that, for each $y\in\mr$, $H(y)\1_{\{N\leq 1\}}=0$ a.s. In view of this, in what follows we work on the event $\{N\geq 2\}$ without further notice.

For $y\geq 0$ and $\varepsilon\in (0, \min(\delta/e, 1/2, 1-\phi(1/e)))$ with the same $\delta$ as in \eqref{eq:ny2}, set
$A_y(\varepsilon):=\big\{\sum_{j=1}^N (1- \phi(e^{-y-X_j}))< \varepsilon\big\}$ and write
$$\int_0^\infty yG(y){\rm d}y =\E\int_0^\infty y e^y H(y)\1_{A_y(\varepsilon)}{\rm d}y
+\E \int_0^\infty y e^y H(y)\1_{(A_y(\varepsilon))^c}{\rm d}y =: I_1+I_2.$$

\noindent {\sc Proof of $I_1<\infty$.} We shall show that $I_1$ is finite under Condition $\mathcal{S}$. Fix any $y\geq 0$. Note that  $1-\phi(e^{-y-X_1})>\varepsilon$ provided that $y+X_1<1$, whence $A_y(\varepsilon)\subseteq A_y:=\{ y+X_i\ge 1~\text{for}~1\leq i\leq N\}$. We have, a.s.\ on  $A_y$,
\begin{equation}\label{eq:ny3}
\sum_{j=1}^N (1-\phi(e^{-y-X_j})) \le M \sum_{j=1}^N  e^{-y-X_j}(y+X_j) \le M e^{-y} (y (W_1^+ + W_1^-) + \widetilde W_1 + \widetilde W^-_1)
\end{equation}
and similarly
\begin{equation}\label{eq:ny4}
\sum_{j=1}^N (1-\phi(e^{-y-X_j}))\ge \delta  e^{-y} (y (W_1^+ + W_1^-) + \widetilde W_1 + \widetilde W^-_1),
\end{equation}
where $\widetilde W^-_1:=-\sum_{j=1}^N e^{-X_j}(X_j)_-$. Recall that the random variables $W_1^+$ and $W_1^-$ were defined in \eqref{eq:sob4}. Observe that $yW_1^- + \widetilde W_1^-\geq 0$ a.s.\ on $A_y$, although $\widetilde W^-_1\leq 0$ a.s.

On the event $A_y(\varepsilon)$, we have $\sum_{j=1}^N (1-\phi(e^{-y-X_j}))<\varepsilon<1/2$ a.s.\ which entails
$1-\phi(e^{-y-X_j})< 1/2$ for $j=1,2,\ldots, N$ a.s. In particular, using the inequality $-\log (1-z)\leq 2z$ for $z\in
[0,1/2]$ we obtain, a.s.\ on $A_y(\varepsilon)$,
\begin{equation}\label{aux2}
-\log \phi(e^{-y-X_j})\leq 2(1-\phi(e^{-y-X_j})),\quad j=1,2,\ldots, N
\end{equation}
and thereupon
$$\sum_{j=1}^N (-\log \phi(e^{-y-X_j})) \leq 2\sum_{j=1}^N(1-\phi(e^{-x-X_j}))<1.$$ An appeal to the inequality $e^{-z}\leq 1-z+z^2$ for $z\in
[0,1]$ enables us to conclude that, a.s.\ on $A_y(\varepsilon)$,
\begin{align*}
\prod_{j=1}^N \phi(e^{-y-X_i})&= \exp\bigg(\sum_{j=1}^N \log \phi(e^{-y-X_i}) \bigg)\\
&\leq 1+ \sum_{j=1}^N \log \phi(e^{-y-X_i})+ \Big(\sum_{j=1}^N \log  \phi(e^{-y-X_i})\Big)^2\\
&\leq 1-\sum_{j=1}^N(1-\phi(e^{-y-X_i}))+ \bigg(\sum_{j=1}^N(1-\phi(e^{-y-X_i}))\bigg)^2.
\end{align*}
Combining \eqref{eq:H} and \eqref{eq:ny3} yields
\begin{equation}\label{eq:nd3}
H(y)\1_{A_y(\varepsilon)} \le M^2 e^{-2y} \big(\widetilde W_1 + (yW_1^- + \widetilde W_1^-) + yW_1^+\big)^2\quad \text{a.s.}
\end{equation}
Further, we decompose $I_1$ into three parts depending on which of the terms $\widetilde W_1$, $yW_1^- + \widetilde W_1^-$
or $yW^+_1$ dominates. In view of \eqref{eq:ny4}, $A_{\varepsilon}(y)$ is a subset of each of the following sets $\{\widetilde W_1 <  \varepsilon_1 e^y\}$,
$\{yW_1^- + \widetilde W_1^- <  \varepsilon_1 e^y\}$ and $\{yW^+_1 <  \varepsilon_1 e^y\}$ for $\varepsilon_1:=\varepsilon/\delta$. Note that $\varepsilon_1 < 1/e$ by our choice of $\varepsilon$. The inequalities $y>-X_1$ a.s.\ on $A_y$ and \eqref{eq:nd3} together entail  $$I_1 \le 9  M^2 (I_{1,1} + I_{1,2} + I_{1,3}),$$ where
\begin{align*}
I_{1,1}: &= \E \widetilde W_1^2 \int_0^{\infty} y e^{-y}\1_{\{  \widetilde W_1 < \varepsilon_1 e^y   \}}{\rm d}y,\\
I_{1,2}: &= \E  \int_{-X_1}^{\infty} y e^{-y} (yW_1^- + \widetilde W_1^-)^2 \1_{A_y} \1_{\{ yW_1^- + \widetilde W_1^- < \varepsilon_1 e^y   \}}{\rm d}y,\\
I_{1,3}: &= \E ( W_1^+)^2 \int_0^{\infty} y^3 e^{-y} \1_{\{  y W^+_1 < \varepsilon_1 e^y   \}}{\rm d}y.
\end{align*}

As for $I_{1,1}$, write
\begin{multline*}
I_{1,1} \le \E \widetilde W_1^2 \int_{\log_+(\widetilde W_1/\varepsilon_1)}^\infty ye^{-y}{\rm d}y=\me \widetilde W_1^2(\log_+(\widetilde W_1/\varepsilon_1) + 1)e^{-\log_+(\widetilde W_1/\varepsilon_1)}\\ \leq \varepsilon_1 \me \widetilde W_1(\log(\widetilde W_1/\varepsilon_1) + 1)\1_{\{\widetilde W_1\geq \varepsilon_1\}}+\varepsilon_1^2<\infty.
\end{multline*}
Here, the finiteness is secured by \eqref{eqn:integrability} which is a part of Condition ${\mathcal S}$.

To deal with $I_{1,2}$, we intend to use Lemma \ref{lem:lambert} with $a=W_1^-$, $b=-\widetilde W_1^-$ and $\varepsilon=\varepsilon_1$. Since
\begin{equation}\label{eq:tk2}
- \frac{\widetilde W_1^-}{W_{1}^-} \le -X_1 \le \log W_{1}^-
\end{equation}
and $\varepsilon_1<1/e$, the lemma applies and justifies the inclusion
$\{y>0:\; \widetilde W_{1}^- + y W_{1}^- < \varepsilon_1 e^y\}\subseteq (0,Y_1)\cup(Y_2,\infty)$. Here, (random variables)
$Y_1$ and $Y_2$ are solutions to the equation
\begin{equation}\label{eq:sb1}
\widetilde W_{1}^- + y W_{1}^- = \varepsilon_1 e^y
\end{equation}
given by
\begin{equation}\label{eq:nd5}
  Y_1  = V - \widetilde W_1^-/ W_1^-,\qquad
  Y_2 = -\log\varepsilon_1 + \log W_1^- + \log \big( -\log \varepsilon_1 + \log W_1^- + \widetilde W_1^-/ W_1^- \big) + o(1),
\end{equation}
where $V$ is a nonnegative random variable bounded by $1$ a.s. In view of these observations we are going to consider the two integrals $I_{1,2}'$ and $I_{1,2}''$ with the integration sets being  $(0,Y_1)\cap(-X_1,\infty)$ and $(Y_2,\infty)$, respectively. Inequality \eqref{eq:tk2} tells us that $Y_1 \le -X_1 + V$ a.s., so that $(0,Y_1)\cap (-X_1,\infty) \subseteq (-X_1, -X_1 + V)$ and thereupon
\begin{multline*}
I'_{1,2} \le \E \int_{-X_1}^{-X_1+V} y e^{-y} (y W_1^- + \widetilde W_1^-)^2\1_{A_y}
\1_{\{ y W_1^- + \widetilde W_1^- <\varepsilon_1 e^y \}}{\rm d}y\\
\le \varepsilon_1^2 \E \int_{-X_1}^{-X_1+V} y e^y{\rm d}y \le \varepsilon_1^2 \E (-X_1+1) e^{-X_1+1} \le \varepsilon_1^2 e(\E W_1^- \log_+ W_1^-+\me W_1^-)<\infty.
\end{multline*}
Here, the finiteness is guaranteed by \eqref{eqn:integrability}. Further, recalling that $Y_2$ solves equation \eqref{eq:sb1} and changing the variable we obtain
\begin{align*}
I''_{1,2} &\le \E\int_{Y_2}^\infty y e^{-y} (yW_1^- + \widetilde W_1^-)^2{\rm d}y=
\E\int_0^\infty (y + Y_2) e^{-y}e^{-Y_2} (yW_1^- + Y_2 W_1^- + \widetilde W_1^-)^2 {\rm d}y\\&=  \E\int_0^\infty(y + Y_2) e^{-y}e^{-Y_2}
(yW_1^- + \varepsilon_1 e^{Y_2})^2{\rm d}y\le C \E (1+Y_2) \big( (W_1^-)^2 e^{-Y_2}+  W_1^- + e^{Y_2}\big).
\end{align*}
Here and hereafter, $C$ denotes a constant whose value is of no importance and may change from line to line. Using the inequalities
$$1+Y_2 \le C(1+\log W_1^-),\quad e^{Y_2} \le C W_1^- (C+ \log W_1^-),\quad
e^{-Y_2} \le C/W_1^-$$ which follow from \eqref{eq:nd5} we infer $I_{1,2}''\leq C \E (1+ \log W_1^- )^2 W_1^-$.
Condition ${\mathcal S}$ ensures that the right-hand side is finite.

Finally, to check that $I_{1,3}<\infty$ we proceed in the same way as above. One needs to determine precisely the integration domain, that is, to solve the equation $y W_{1}^+ = \varepsilon_1 e^y$. Lemma \ref{lem:lambert} (with $a=W_1^+$ and $b=0$) ensures the existence of two solutions to this equation: an a.s.\ bounded nonnegative random variable $Y_1$ and \begin{equation}\label{eq:nd4}
Y_2  = -\log\varepsilon_1 + \log W_1^+ + \log \big( -\log \varepsilon_1 + \log W_1^+ \big) + o(1).
\end{equation}
We skip further details. The proof of $I_1<\infty$ is complete.

\noindent {\sc Proof of $I_2<\infty $}. The function $G$ is bounded on $[0,1]$. Therefore, it is sufficient to consider the integral $I_2$ over the set $[1,\infty)$.
For $y\ge 1$, put $N^-(y):=\max\{j\leq N: y+X_j < 1\}$ with the standard convention that $N^-(y):=0$ if $y+ X_1 \ge 1$. Plainly, $N^-:=N^-(1)$ denotes the number of the first generation individuals located on the negative halfline, and, provided that $N^-\geq 1$, $X_{N^-}$ denotes the position of the rightmost first generation individual located on the negative halfline.

We shall use the inequality which follows directly from \eqref{eq:ny2} (compare \eqref{eq:ny3}):
\begin{equation}\label{xyz}
H(y) \le \sum_{j=1}^N (1-\phi(e^{-y-X_j}))\le Me^{-y} \big(\widetilde W_1 + y W_1^+ + F(y)\big)+ N^-(y),
\end{equation}
where $F(y):=\widetilde W_1(y) + y W_1(y)$,
\begin{equation}\label{eq:sob6}
\widetilde W_1(y):=\sum_{j=N^-(y)+1}^{N^-} e^{-X_j}X_j\quad\text{and}\quad
W_1(y):=\sum_{j=N^-(y)+1}^{N^-} e^{-X_j}.
\end{equation}
We note in passing that $F(y)=N^-(y)=0$ a.s.\ on $\{N^-=0\}$ and that, in general, $F(y)\geq 0$ a.s., but $\widetilde W_1(y) \le 0$ a.s. As we did before for $I_1$, we shall investigate the contribution of each term on the right-hand side of \eqref{xyz}
separately. To this end, we use the easily checked inequality $$(a_1+\ldots+a_m)\1_{\{a_1+\ldots+a_m>\rho\}}\leq m (a_1\1_{\{a_1>\rho/m\}}+\ldots+a_m\1_{\{a_m>\rho/m\}})$$ for $m\in\mn$, nonnegative $a_1,\ldots, a_m$ and $\rho>0$, to obtain $$I_2\leq 4 M (I_{2,1} + I_{2,2} + I_{2,3} + I_{2,4}).$$ Here, with $\varepsilon':=\varepsilon/(4M)$,
\begin{align*}
I_{2,1} & := \E \widetilde W_1 \int_1^\infty y\1_{\{\widetilde W_1 > \varepsilon' e^y \}}{\rm d}y,
\quad I_{2,2}:=\E W_1^+ \int_1^\infty y^2 \1_{\{ y W_1^+ > \varepsilon' e^y \}}{\rm d}y,\\
I_{2,3} &:= \E \int_1^\infty ye^y N^-(y){\rm d}y, \quad I_{2,4}:= \E \int_1^\infty yF(y)\1_{\{ F(y) > \varepsilon' e^y \}}{\rm d}y.
\end{align*}

The analysis of $I_{2,1}$ is simple: $$I_{2,1}\le \me\widetilde W_1 \int_0^{\log_+(\widetilde W_1/\varepsilon')} y {\rm d}y=(1/2)\E \widetilde W_1 (\log_+ (\widetilde W_1/\varepsilon'))^2<\infty,$$ where the finiteness is a consequence of the second part of \eqref{eq:2.4}.

To treat $I_{2,2}$ we use the same $Y_2$ as in \eqref{eq:nd4} which gives $$I_{2,2}\le C+ C \E \1_{\{W^+_1 > \varepsilon'\}} W^+_1 \int_0^{Y_2} y^2{\rm d}y\le C(1+\E W^+_1 (\log_+ W^+_1)^3)<\infty.$$ Here, the finiteness is justified by the first part of \eqref{eq:2.4}.

Next, we work with $I_{2,3}$. For notational simplicity, let $X_0: = -\infty$ and $X_{N^-+1}:=0$. Put $g(y)=(y-1)e^y$ for $y\in\mr$ and note that $g'(y)=ye^y$.
Since $N^-(y)=j$ for $y\in (-X_{j+1},-X_j)$, we have
\begin{align*}
I_{2,3}&= \E \sum_{j=0}^{N^-} j \int_{-X_{j+1}+1}^{-X_j+1}g^\prime(y){\rm d}y=\E \sum_{j=1}^{N^-} j (g(-X_j+1) - g(-X_{j+1}+1))\\
& = \E \sum_{j=1}^{N^-}  g(-X_{j}+1)
\le e \E W_1^- \log_+ W_1^-<\infty.
\end{align*}
The finiteness follows from \eqref{eqn:integrability}.

It remains to prove that $I_{2,4}<\infty$. Put
$B:=\big\{\sum_{j=1}^{N^-} e^{-\Delta_j}(1+\Delta_j) \le C_0  \big\}$,  where $\Delta_j:= X_j - X_1$ and $C_0$ is the same as in \eqref{eq:2.5}. Write
\begin{multline*}
I_{2,4} \le \E \1_B \int_0^{\max(-X_1,1)} y F(y){\rm d}y+\E \1_{B^c} \int_0^{\max(-X_1,1)} y F(y){\rm d}y\\
+\E \1_B \int_{\max(-X_1,1)}^\infty y F(y)\1_{\{F(y)>\varepsilon' e^y\}}{\rm d}y+ \E \1_{B^c} \int_{\max(-X_1,1)}^\infty F(y)\1_{\{F(y)>\varepsilon'e^y\}}{\rm d}y =: J_1+J_2+J_3+J_4.
\end{multline*}
For $y\ge 1$, we have, a.s.\ on $B\cap \{y <-X_1\}$, $$F(y)=\sum_{j=1}^{N^-}e^{-X_j}(y+X_j)_+ \le e^{-X_1} \sum_{j=1}^{N_-}e^{-\Delta_j}\Delta_j\le C_0 e^{-X_1}$$ and thereupon
$$J_1 \le (C_0/2) \E (-X_1)^2 e^{-X_1} \le (C_0/2)\E W_1^- (\log W_1^-)^2 <\infty.$$ Here, the finiteness is ensured by \eqref{eqn:integrability}. Next, using $(-X_1)\le W_1^-$ a.s.\ we infer
\begin{equation*}
J_2 \le \E W_1^- \1_{B^c}\int_0^{-X_1} y^2{\rm d}y \le (1/3) \E W_1^-\1_{B^c}(-X_1)^3 \le (1/3)\E W_1^- (\log W_1^-)^3\1_{B^c}<\infty,
\end{equation*}
where the finiteness is a consequence of \eqref{eq:2.4}.

It holds, a.s.\ on $B$, that
$$ e^{-y+X_1}F(y-X_1) =  \sum_{j=1}^{N^-}e^{-y-\Delta_j}(y+\Delta_j)=e^{-y}y\sum_{j=1}^{N^-}e^{-\Delta_j}+e^{-y}\sum_{j=1}^{N^-}e^{-\Delta_j}\Delta_j\le C_0 (y+1) e^{-y}.$$
With this at hand, we obtain
\begin{multline*}
J_3 \le  (1/\varepsilon') \E \1_B \int_{\max(-X_1,1)}^\infty y e^{-y} (F(y))^2{\rm d}y=(1/\varepsilon')\E \1_B \int_{\max(0, 1+X_1)}^\infty (y-X_1) e^{-y+X_1}(F(y-X_1))^2{\rm d}y\\ \leq C_0^2\E e^{-X_1}\int_0^\infty (y-X_1)( y+1)^2 e^{-y}{\rm d}y \le C\E(1+ (-X_1)e^{-X_1})\le C\E ( 1+ W_1^- \log_+W_1^-)<\infty,
\end{multline*}
where the finiteness is secured by \eqref{eqn:integrability}.

Finally, to deal with $J_4$ we denote by $Y_2$ the larger solution to the equation $yW_1^- = \varepsilon' e^y$. According to Lemma \ref{lem:lambert},
$Y_2 =-\log \varepsilon' + \log W_1^- + \log(-\log \varepsilon' + \log W_1^-) + o(1)$ which entails
\begin{multline*}
J_4 \le \E\1_{B^c}\int_{-X_1}^\infty y^2 W_1^- \1_{\{yW_1^-  > \varepsilon' e^y\}}{\rm d}y\\
 \le C + C \E  \1_{B^c}  \1_{\{ W^-_1 > \varepsilon'\}} W^-_1 \int_{-X_1}^{Y_2} y^2{\rm d}y
\le C \E \1_{B^c} W_1^- (\log W_1^-)^3 <\infty.
\end{multline*}
The finiteness is ensured by \eqref{eq:2.5}. The proof of $I_2<\infty$ is complete.
\end{proof}

\begin{lemma}\label{suff1}
Assume that Condition $\mathcal{S}$ holds. Then \eqref{eq:2.4} is necessary for $\int_0^{\infty}y G(y){\rm d}y<\infty$.
\end{lemma}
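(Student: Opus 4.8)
The plan is to prove directly the implication: if $\int_0^\infty yG(y)\,\dd y<\infty$, then \eqref{eq:2.4} holds. Since $G\ge0$ we then also have $\int_1^\infty yG(y)\,\dd y<\infty$, and the whole proof consists of producing a lower bound for $\int_1^\infty yG(y)\,\dd y$ by a constant multiple of $\E\big[(W_1^+(\log_+W_1^+)^3+\widetilde W_1(\log_+\widetilde W_1)^2)\,\1_{\{W_1^+\vee\widetilde W_1\ge A\}}\big]$ for a suitable deterministic $A>1$; since on the complementary event this same bracket is bounded by $A(\log_+A)^3+A(\log_+A)^2$, finiteness of the integral forces \eqref{eq:2.4}. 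Throughout I use the representation $G(y)=e^y\E H(y)$ with $H$ as in \eqref{eq:H}, and the one-sided bound $1-\phi(z)\ge\delta z(-\log z)$ for $z\in(0,1/2)$ coming from \eqref{eq:ny2}, which under Condition $\mathcal S$ is available via \eqref{limit1} (Theorem \ref{impo}).

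The only real idea is an elementary lower bound for $H$. Since each factor $\phi(e^{-y-X_j})$ lies in $[0,1]$, one has $\prod_{j=1}^N\phi(e^{-y-X_j})\ge0$; dropping from $\sum_{j=1}^N(1-\phi(e^{-y-X_j}))$ the nonnegative terms with $X_j<0$, and recalling $H(y)\ge0$, I get $H(y)\ge(S(y)-1)_+$, where $S(y):=\sum_{j:\,X_j\ge0}(1-\phi(e^{-y-X_j}))$. Restricting to the individuals at nonnegative positions is precisely what keeps $W_1^-$ and $X_{\min}$ out of the estimate, and it also makes \eqref{eq:ny2} applicable for every $y\ge1$, since then $e^{-y-X_j}\le e^{-1}<1/2$ for all such $j$. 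Summing \eqref{eq:ny2} over these individuals gives $S(y)\ge\delta e^{-y}Q(y)$ with $Q(y):=yW_1^++\widetilde W_1=\sum_{j:\,X_j\ge0}e^{-X_j}(y+X_j)$. Hence on the event $\{\delta e^{-y}Q(y)\ge2\}$ one has $(S(y)-1)_+\ge S(y)/2\ge\tfrac\delta2 e^{-y}Q(y)$, so that $G(y)\ge\tfrac\delta2\,\E\big[Q(y)\,\1_{\{\delta e^{-y}Q(y)\ge2\}}\big]$ for all $y\ge1$.

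Next I would apply Tonelli to obtain $\int_1^\infty yG(y)\,\dd y\ge\tfrac\delta2\,\E\int_1^\infty yQ(y)\,\1_{\{\delta e^{-y}Q(y)\ge2\}}\,\dd y$ and evaluate the inner integral pathwise. On $[1,\infty)$ the map $y\mapsto\delta e^{-y}Q(y)$ is nonincreasing, since its derivative equals $\delta e^{-y}(W_1^+(1-y)-\widetilde W_1)\le0$ there, and it vanishes at $\infty$; so the set $\{y\ge1:\delta e^{-y}Q(y)\ge2\}$ is an interval $[1,T]$, where $T$ solves $e^T=\tfrac\delta2(TW_1^++\widetilde W_1)$. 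A two–line estimate (in the spirit of Lemma \ref{lem:lambert}) shows that there is a deterministic $A>1$ such that on $\{W_1^+\vee\widetilde W_1\ge A\}$ the interval is nonempty and $\tfrac12L\le T\le 2L$ with $L:=\log(W_1^+\vee\widetilde W_1)$. On that event $\int_1^T y(yW_1^++\widetilde W_1)\,\dd y=\tfrac{T^3-1}{3}W_1^++\tfrac{T^2-1}{2}\widetilde W_1$ is, up to absolute constants, at least $W_1^+L^3+\widetilde W_1L^2\ge W_1^+(\log_+W_1^+)^3+\widetilde W_1(\log_+\widetilde W_1)^2$; note the two different logarithmic exponents come out automatically from the two powers of $y$ in $Q(y)=yW_1^++\widetilde W_1$ integrated against $y\,\dd y$. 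Feeding this back gives $\E\big[(W_1^+(\log_+W_1^+)^3+\widetilde W_1(\log_+\widetilde W_1)^2)\,\1_{\{W_1^+\vee\widetilde W_1\ge A\}}\big]<\infty$, and adding the finite contribution of $\{W_1^+\vee\widetilde W_1<A\}$ yields \eqref{eq:2.4}.

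As for the main obstacle: there is no deep one, and that is somewhat the point. The crux is choosing the lower bound $H(y)\ge(S(y)-1)_+$ rather than the pair bound $H(y)\gtrsim\sum_{i<j}(1-\phi(e^{-y-X_i}))(1-\phi(e^{-y-X_j}))$ used for the sufficiency direction in Lemma \ref{lem:d1}: the "first-moment of the killed offspring sum" bound is robust exactly in the regime where the offspring mass is spread over many individuals, where the pair bound is far too weak to generate the cubic/quadratic logarithmic factors. Once that bound is in place, the remaining work — verifying monotonicity and decay of $y\mapsto e^{-y}Q(y)$ on $[1,\infty)$ to identify the integration set with $[1,T]$, and the elementary asymptotics $T\asymp\log(W_1^+\vee\widetilde W_1)$ — is routine.
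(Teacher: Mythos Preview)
Your proof is correct and follows essentially the same route as the paper: both use $H(y)\ge\big(\sum_j(1-\phi(e^{-y-X_j}))-1\big)_+$ together with the lower bound $1-\phi(z)\ge\delta z(-\log z)_+$ to obtain $G(y)\ge\tfrac{\delta}{2}\E\big[Q(y)\1_{\{\delta e^{-y}Q(y)\ge2\}}\big]$ with $Q(y)=yW_1^++\widetilde W_1$, then integrate against $y\,\dd y$. The only cosmetic difference is that the paper passes to the smaller, simpler sub-events $\{\widetilde W_1>2e^y/\delta\}$ and (for $y\ge1$) $\{W_1^+>2e^y/\delta\}$ to compute the two integrals explicitly, whereas you keep the full event $[1,T]$ and estimate $T$ via Lemma~\ref{lem:lambert}; both executions yield the same conclusion.
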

\begin{proof}
In view of \eqref{limit1} there exists $\delta_1>0$ such that $$1-\phi(z) \ge \delta_1 z (-\log z)_+, \quad z\ge 0.$$ Hence, for $y\geq 0$,
\begin{equation}\label{123}
\sum_{j=1}^N(1-\phi(e^{-y-X_j}))\ge \delta_1 \sum_{j=1}^N e^{-y-X_j}(y+X_j)_+ \ge \delta_1 e^{-y}\big( \widetilde W_1 + yW_1^+ \big).
\end{equation}
For each $y>0$, define the event $D_y:=\{\delta_1 e^{-y}\big( \widetilde W_1 + yW_1^+ \big) > 2 \}$. If $D_y=\oslash$ for all $y>0$, then both $\widetilde W_1$ and $W_1^+$ are a.s.\ bounded random variables which entails that \eqref{eq:2.4} holds. Thus, in what follows we assume that $D_y\neq \oslash$ for some $y>0$. For such $y$, we conclude with the help of \eqref{123} that, a.s.\ on $D_y$,
\begin{equation}\label{1234}
\prod_{j=1}^N \phi(e^{-y-X_j})-1+\sum_{j=1}^N (1-\phi(e^{-y-X_j})) \ge -1+\sum_{j=1}^N (1-\phi(e^{-y-X_j}))\geq (\delta_1/2)e^{-y}\big( \widetilde W_1 + yW_1^+ \big).
\end{equation}
This in combination with the inclusions $\{ \widetilde W_1 > 2e^y/\delta_1 \}\subseteq D_y$ and $\{ y W^+_1 > 2e^y/\delta_1\}\subseteq D_y$  yields
\begin{multline*}
\infty>\int_0^\infty yG(y){\rm d}y \ge (\delta_1/2) \E \int_0^\infty y\big(\widetilde W_1 + yW_1^+ \big)\1_{D_y}{\rm d}y\\
\geq (\delta_1/2)\Big(\E \widetilde W_1 \int_0^\infty y\1_{\{\widetilde W_1 > 2e^y/\delta_1\}}{\rm d}y
+\E  W^+_1 \int_1^\infty y^2\1_{\{W^+_1>2e^y/\delta_1\}}{\rm d}y\Big)\\ = (\delta_1/4)\E \widetilde W_1 \big(\log_+(\delta_1 \widetilde W_1/2)\big)^2+
(\delta_1/6)\big(\E W^+_1 \big(\log((\delta_1 W^+_1/2)\vee e)\big)^3-\me W^+_1\big).
\end{multline*}
This proves the necessity of \eqref{eq:2.4}.
\end{proof}

\begin{lemma}\label{lem:6.13}
Assume that Condition $\mathcal{S}$ and \eqref{mom3} hold. Then \eqref{eq:2.5} is necessary for $\int_0^\infty yG(y)dy <\infty$. 
\end{lemma}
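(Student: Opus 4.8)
Write $G(y)=e^y\E H(y)$ with $H$ as in \eqref{eq:H} (so $H\ge0$), and recall that, as in the proof of Lemma \ref{suff1}, \eqref{limit1} provides $\delta_1>0$ with $1-\phi(z)\ge\delta_1 z(-\log z)_+$ for all $z\ge0$. Order the first-generation positions so that $X_1=X_{\min}$, put $\Delta_j:=X_j-X_1$ and
\[
\Sigma:=\sum_{j:X_j<0}e^{-\Delta_j},\qquad \Theta:=\sum_{j:X_j<0}(1+\Delta_j)e^{-\Delta_j},
\]
so that $W_1^-=e^{-X_1}\Sigma$ and $1\le\Sigma\le\Theta$. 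Since the indicator in \eqref{eq:2.5} vanishes off $\{X_{\min}<0\}$, and $\Theta>0$ already forces $X_1<0$, it suffices to work on $B^c=\{\Theta>C_0\}$ (with $B$ as in Lemma \ref{lem:d1}), where $C_0$ is chosen so large that $L:=\log(\delta_1\Theta/2)\ge2$ on $B^c$. The elementary observation driving the proof is that for $y=-X_1+t$, $t\ge1$, one has $e^{-y-X_j}=e^{-(t+\Delta_j)}$ for every $j$, whence
\[
\sum_{j=1}^N\bigl(1-\phi(e^{-y-X_j})\bigr)\ \ge\ \delta_1\sum_{j:X_j<0}e^{-(t+\Delta_j)}(t+\Delta_j)\ =\ \delta_1 e^{-t}\bigl((t-1)\Sigma+\Theta\bigr)\ \ge\ \delta_1 e^{-t}\Theta ;
\]
hence on $B^c$, for $1\le t\le L$ this sum is $\ge2$, so (discarding the nonnegative product term) $H(-X_1+t)\ge\tfrac12\sum_{j:X_j<0}(1-\phi(e^{-y-X_j}))\ge\tfrac{\delta_1}{2}e^{-t}\bigl((t-1)\Sigma+\Theta\bigr)$.

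Inserting this into $\int_0^\infty yG(y)\,{\rm d}y=\E\int_0^\infty ye^yH(y)\,{\rm d}y$ (Tonelli), restricting the inner integral to $y\in[-X_1+1,-X_1+L]\subset(0,\infty)$ and substituting $y=-X_1+t$ gives
\[
\int_0^\infty yG(y)\,{\rm d}y\ \ge\ \frac{\delta_1}{2}\,\E\Bigl[\1_{B^c}\,e^{-X_1}\int_1^L(-X_1+t)\bigl((t-1)\Sigma+\Theta\bigr)\,{\rm d}t\Bigr].
\]
Keeping only the summand $(t-1)\Sigma$, together with $W_1^-=e^{-X_1}\Sigma$, $-X_1+t\ge t$ and $\int_1^L t(t-1)\,{\rm d}t\ge L^3/12$, yields $\E[\1_{B^c}W_1^-L^3]\le C\int_0^\infty yG(y)\,{\rm d}y$; keeping only $\Theta$ and using $\int_1^L(-X_1+t)\,{\rm d}t\ge cL(-X_1+L)$ yields $\E[\1_{B^c}e^{-X_1}\Theta L(-X_1+L)]\le C\int_0^\infty yG(y)\,{\rm d}y$. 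Thus, under the hypothesis $\int_0^\infty yG(y)\,{\rm d}y<\infty$ we get both $\E[\1_{B^c}W_1^-L^3]<\infty$ and $\E[\1_{B^c}e^{-X_1}\Theta(-X_1)L]<\infty$.

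To extract \eqref{eq:2.5} I would then write $\log W_1^-=(-X_1)+\log\Sigma$, both nonnegative on $B^c$, so $(\log W_1^-)^3\le4\bigl((-X_1)^3+(\log\Sigma)^3\bigr)$. Since $\Sigma\le\Theta$ and $\log\Theta=L+\log(2/\delta_1)\le CL$ on $B^c$, the $(\log\Sigma)^3$-contribution is $\le CW_1^-L^3$ and is integrable by the first bound. For the $(-X_1)^3$-contribution, use the identity $W_1^-(-X_1)^3=\sum_{j:X_j<0}e^{-X_j}(-X_j)^3+R$, where, because $(-X_1)^3-(-X_1-\Delta_j)^3\le 3(-X_1)^2\Delta_j$ and $\sum_{j:X_j<0}\Delta_j e^{-\Delta_j}\le\Theta$, the remainder satisfies $R\le 3(-X_1)^2e^{-X_1}\Theta$. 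Hence $\E[\1_{B^c}W_1^-(-X_1)^3]\le\E\bigl[\sum_{j:X_j<0}e^{-X_j}(-X_j)^3\bigr]+3\E[\1_{B^c}(-X_1)^2e^{-X_1}\Theta]$, the first term being finite by \eqref{mom3}. On $\{-X_1\le CL\}$ the second expectation is finite because there $(-X_1)^2\le C'(-X_1+L)L$ and the second bound above applies; on the complementary set $\{-X_1>CL\}$ — where $\Theta$ is at most a small power of $e^{-X_1}$ and the lower bound is essentially exhausted — one is forced to compare with \eqref{mom3} term by term, splitting the negative individuals according to whether $\Delta_j\le(-X_1)/2$ (for which $-X_j\ge(-X_1)/2$, so $(-X_1)^2(1+\Delta_j)e^{-X_j}\le C((-X_j)^3+1)e^{-X_j}$, summable by \eqref{mom3} and $\E W_1^-\le1$) or $\Delta_j>(-X_1)/2$.

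The step I expect to be the real obstacle is precisely this last regime — $X_{\min}$ lying far to the left of $-\log\Theta$, together with the contribution of the negative individuals far from $X_{\min}$ (i.e. those with $\Delta_j>(-X_1)/2$). Each such individual contributes only a vanishing amount to $\Theta$, but there can be many of them, and $\{\Theta>C_0\}$ does not pin down this part of $\Theta$; moreover the integration window used for the lower bound has width only of order $\log\Theta$, so the lower bound can never recover more than one power of $-X_{\min}$, and the full cubic tail must be carried by \eqref{mom3} and Condition $\mathcal{S}$. Closing this regime cleanly — presumably by a more careful, possibly multiscale, accounting of the negative individuals rather than the single window at $-X_{\min}$ — is the technical heart of the argument; the remaining ingredients (the key inequality, the choice of window, Tonelli plus substitution, and the $(\log\Sigma)^3$-part) are routine.
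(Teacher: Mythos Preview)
Your diagnosis of the obstacle is exactly right, and it is a genuine gap: the window $[-X_1+1,-X_1+L]$ can recover at most one power of $-X_1$ in the lower bound, so you cannot hope to get $\E[\1_{B^c}W_1^-(-X_1)^3]<\infty$ from it, and your attempt to close the case $\{-X_1>CL,\ \Delta_j>(-X_1)/2\}$ does not go through --- on that set $\Theta$ may be of order $e^{(-X_1)/C}$, and neither \eqref{mom3} nor Condition~$\mathcal S$ controls $(-X_1)^2e^{-X_1}\Theta$ there.

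The missing idea is that you must also exploit the region $y\in[0,-X_1]$. On that interval the lower bound on $H$ gives $\E\int_0^{-X_1}yF(y)\1_{D_y}\,\dd y<\infty$, where $D_y=\{\delta_1e^{-y}F(y)>2\}$ and $F(y)=\sum_{j:X_j<0}e^{-X_j}(y+X_j)_+$; the complementary integral over $D_y^c$ is bounded by $(2/\delta_1)\E\int_0^{-X_1}ye^y\,\dd y$, finite by Condition~$\mathcal S$. Thus $\E\int_0^{-X_1}yF(y)\,\dd y<\infty$. Computing this integral exactly (it equals $\tfrac16\E\sum_{j:X_j<0}e^{-X_j}\Delta_j^2(2(-X_1)+(-X_j))$, via the identity $2a^3+b^3-3a^2b=(a-b)^2(2a+b)$) yields $\E\sum_{j:X_j<0}e^{-X_j}(-X_j)\Delta_j^2<\infty$, and then the elementary inequality $a^3\le 8b^3+4a(a-b)^2$ with $a=-X_1$, $b=-X_j$ together with \eqref{mom3} gives
\[
\E\,(-X_1)^3W_1^-<\infty.
\]
This is the substitute for your unfinished ``multiscale'' argument: the interval $[0,-X_1]$ is where the three powers of $-X_1$ come from.

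With this bound in hand, the argument on $[-X_1+1,\infty)\cap D_y\cap B^c$ is routine. The upper endpoint of $D_y$ is the larger root $Y_2$ of $yW_1^-+\widetilde W_1^-=(2/\delta_1)e^y$, so $Y_2\asymp\log W_1^-$ (not just $\log\Theta$); integrating $y(yW_1^-+\widetilde W_1^-)$ over $[-X_1+1,Y_2]$ and subtracting the (now finite) contribution from the endpoint $-X_1+1$ leaves $J=\E\1_{B^c}[2W_1^-Y_2^3+3\widetilde W_1^-Y_2^2]<\infty$. Splitting $B^c$ according to whether $\log W_1^-<2(-X_1)$ or not, the first piece is controlled by $\E(-X_1)^3W_1^-$ and on the second piece one checks $2W_1^-Y_2^3+3\widetilde W_1^-Y_2^2\ge\tfrac12W_1^-(\log W_1^-)^3$, which gives \eqref{eq:2.5}.

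In short: your setup, the lower bound on $H$, and the analysis on the right of $-X_1$ are all in the right spirit, but the crucial harvest of the cubic in $-X_1$ happens on the interval $[0,-X_1]$, which you did not use.
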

\begin{proof}
We retain, for the most part, the notation from the proof of Lemma \ref{lem:d1}.  Additionally, we put $B:=\big\{\sum_{j=1}^{N^-}e^{-\Delta_j}(1+\Delta_j) \le 2e/\delta_1 \big\}$ (with $\delta_1$ as in \eqref{123}) and, for $y>0$,
$D_y: = \{\delta_1 e^{-y} F(y) >2 \}$. Assume that $D_y=\oslash$ for all $y>0$. Then taking $y=(-X_1)+1$ we conclude that $\P(B)=1$ which implies that \eqref{eq:2.5} holds with any $C_0>2e/\delta_1$. Therefore, from now on we assume that $D_y\neq \oslash $ for some $y>0$. By the argument leading to \eqref{1234}, we have, a.s.\ on $D_y$,
$ H(y) \ge (\delta_1/2) e^{-y} F(y)$, whence $$\infty > \int_0^\infty yG(y){\rm d}y \ge (\delta_1/2) \E \int_0^\infty yF(y)\1_{D_y}{\rm d}y.$$ This particularly yields
\begin{equation}\label{eq:el2}
I_1:=\E \int_0^{-X_1} y F(y)\1_{D_y}{\rm d}y <\infty \quad \mbox{and} \quad
I_2: = \E \int_{-X_1+1}^\infty y F(y) \1_{D_y}\1_{B^c}{\rm d}y <\infty.
\end{equation}

We first prove that $I_1<\infty$ entails
\begin{equation}\label{eq:el3}
\E (-X_1)^3 W_1^- < \infty.
\end{equation}
Indeed, observe that
\begin{align*}
I_1^\ast:= \E \int_0^{-X_1} y F(y)\1_{D_y^c}{\rm d}y &=
\E \int_0^{-X_1} y F(y)\1_{\{F(y)\leq (2/\delta_1) e^y\}}{\rm d}y\\
&\le (2/\delta_1) \E \int_0^{-X_1} y e^y {\rm d}y=(2/\delta_1)(\E (-X_1) e^{-X_1}-\me e^{-X_1}+1)<\infty
\end{align*}
as a consequence of \eqref{eqn:integrability}. Summing up $I_1$ and $I_1^\ast$ we obtain
\begin{align*}
\infty & > \E \int_0^{-X_1} y F(y){\rm d}y=\E \int_0^{-X_1} y \sum_{j=1}^{N^-}e^{-X_j}(y+X_j)_+ {\rm d}y\\
& = \E \sum_{j=1}^{N^-} e^{-X_j}\int_{-X_j}^{-X_1} y (y+X_j){\rm d}y\\
& = \E \sum_{j=1}^{N^-}e^{-X_j}\Big[(1/3)((-X_1)^3 -(-X_j)^3) + (1/2) ( (-X_1)^2 - (-X_j)^2)X_j\Big]\\
  & = (1/6)\E \sum_{j=1}^{N^-}e^{-X_j}\Big[ 2(-X_1)^3 + (-X_j)^3 - 3 (-X_1)^2 (-X_j)\Big]\\
  & = (1/6) \E \sum_{j=1}^{N^-}e^{-X_j} \Delta_j^2  (2(-X_1) + (-X_j)),
\end{align*}
where we have used the identity $2a^3+b^3 - 3a^2b = (a-b)^2(2a+b)$, $a,b\in\mr$ for the last equality. Hence,
\begin{equation}\label{eq:sob1}
\E \sum_{j=1}^{N^-}e^{-X_j}(-X_j)\Delta_j^2 <\infty.
\end{equation}
The inequality $a^3 \le 8b^3 + 4a(a-b)^2$ holds for any $a>b>0$. Using it with $a=-X_1$ and $b=-X_i$ we infer
$$ \E (-X_1)^3 \sum_{j=1}^{N^-} e^{-X_j} \le 8 \sum_{j=1}^{N^-}e^{-X_j}(-X_j)^3 + 4 \sum_{j=1}^{N^-}e^{-X_j}(-X_j)\Delta_j^2.$$
This reveals that \eqref{eq:el3} is a consequence of \eqref{mom3} and \eqref{eq:sob1}.

After these preparations we are ready to show the necessity of condition \eqref{eq:2.5}. To this end, we first observe that, a.s.\ on $B^c$, $y_0W_1^- + \widetilde W_1^- > (2/\delta_1) e^{y_0}$, where $y_0:=-X_1+1$. This implies that $D_y \cap (-X_1+1 ,\infty)=(-X_1+1,Y_2)$ with $Y_2$ being the larger solution to the equation $yW_1^- + \widetilde W_1^-= (2/\delta_1) e^y$. Recall that  the $Y_2$ is given by \eqref{eq:nd5} with $2/\delta_1$ replacing $\varepsilon_1$. As a consequence, we obtain
\begin{align*}
\infty & > I_2=\E \1_{B^c}\int_{-X_1+1}^{Y_2} y(yW^-_1 + \widetilde W^-_1){\rm d}y\\
&= (1/6) \E\1_{B^c} \Big[2W_1^-(Y_2^3 - (-X_1+1)^3) + 3 \widetilde W^-_1(Y_2^2 - (-X_1+1)^2)\Big],
 \end{align*}
and inequality \eqref{eq:el3} ensures that
$$\infty > J:= \E\1_{B^c} \Big[ 2W^-_1 Y_2^3+ 3 \widetilde W_1^- Y_2^2 \Big].$$ Put $A:=\{\log W_1^- < 2(-X_1)\}$. We have, a.s.\ on $B^c\cap A$,
\begin{equation}\label{eq:sob2}
  \big| 2W^-_1 Y_2^3+ 3 \widetilde W_1^- Y_2^2 \big| \le C
\big(W^-_1 (\log W_1^-)^3+ (-X_1) W_1^- (\log W_1^-)^2\big)\le C  W_1^- (-X_1)^3,
\end{equation}
whereas, a.s.\ on $B^c\cap A^c$,
\begin{equation}\label{eq:sob3}
\begin{split}
2W^-_1 Y_2^3+ 3 \widetilde W_1^- Y_2^2 &\ge
2W^-_1 Y_2^3+ 3 X_1 W_1^- Y_2^2 \ge
W_1^- Y_2^2 (2Y_2 + 3X_1)\\ &\ge
W_1^- Y_2^2 (2\log W_1^- + 3X_1) \ge
(1/2) W_1^- (\log W_1^-)^3.
\end{split}
\end{equation}
Combining \eqref{eq:sob2} and \eqref{eq:sob3} yields
\begin{align*}
\infty & > 2J \ge \E \1_{B^c}\1_{A^c} W_1^-(\log W_1^-)^3 - C \E W_1^- (-X_1)^3\\
& = \E \1_{B^c} W_1^-(\log W_1^-)^3  - \E \1_{B^c}\1_{A} W_1^-(\log W_1^-)^3-C \E W_1^- (-X_1)^3\\
& \ge \E \1_{B^c} W_1^-(\log W_1^-)^3-(C+8)\E W_1^- (-X_1)^3.
\end{align*}
Invoking \eqref{eq:el3} we conclude that condition \eqref{eq:2.5} holds.
\end{proof}

Now we are ready to prove Theorem \ref{expaLT}.
\begin{proof}[Proof of Theorem \ref{expaLT}]
We first note that, by Lemma \ref{don} (b), the distribution of $S_{\tau_1}$ is nonarithmetic.

In view of Theorem \ref{impo} and Lemma \ref{finite}, the first equality in \eqref{limit123} holds if, and only if, $\int_0^\infty yG(y){\rm d}y<\infty$. Thus, the second equality in \eqref{limit123} holds if, and only if, $\int_0^\infty yG(y){\rm d}y<\infty$ and
\begin{equation}\label{second}
\lim_{x\to\infty}(\mu U(x)-x)=c_3
\end{equation}
for some finite constant $c_3$. By Lemma \ref{don} (d), relation \eqref{second} holds if, and only if, $\me \xi_-^3<\infty$ (which is nothing else but \eqref{mom3}).

Now we conclude with the help of Lemmas \ref{suff1} and \ref{lem:6.13} that the second equality in \eqref{limit123} also entails \eqref{eq:2.4} and  \eqref{eq:2.5}, hence
Condition $\mathcal{S}^\ast$. Sufficiency of \eqref{eq:2.4} and  \eqref{eq:2.5} for the first equality in \eqref{limit123} is justified by Lemma \ref{lem:d1}.
\end{proof}

\section{Proofs related to the rate of convergence}
\subsection{Auxiliary results}

We start with a few auxiliary facts which can be lifted from the existing literature.
\begin{lemma}\label{aux_limit}
Assume that Condition $\mathcal{S}$ holds. Then,

\noindent (a) $n^{1/2}\sum_{|u|=n}e^{-S(u)}\overset{\mmp}{\to}
(2/(\pi \sigma^2))^{1/2}Z$ as $n\to\infty$;

\noindent (b) $M^\ast_n:=\inf_{|u|=n}S(u)-2^{-1}\log
n~\overset{\mmp}{\to}~ +\infty$ as $n\to\infty$;

\noindent (c) for $\beta>1$ and $m\in\mn$,
$$n^{\beta/2}\sum_{|u|=n}e^{-\beta S(u)}(S(u)-2^{-1}\log n)^m~\overset{\mmp}{\to}~0,\quad n\to\infty.$$
\end{lemma}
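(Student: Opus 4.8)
The plan is to treat the three parts separately. Parts (a) and (b) I would obtain by quoting known deep results, after checking that Condition $\mathcal{S}$ covers their hypotheses, and part (c) I would then deduce from (a) and (b) by a short estimate. For (a), this is the Seneta--Heyde normalization of the additive (Biggins) martingale $W_n=\sum_{|u|=n}e^{-S(u)}$ in the boundary case: under Condition $\mathcal{S}$ I would invoke the main result of \cite{Aidekon+Shi:2014}, which gives $n^{1/2}W_n\overset{\mmp}{\to}(2/(\pi\sigma^2))^{1/2}Z$ with $Z$ the derivative martingale limit of \cite{Aidekon:2013}; the only point to verify is that Condition $\mathcal{S}$ implies the assumptions under which that theorem is stated.

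For (b), I would use the convergence in law of the minimal displacement. Conditionally on survival, $\inf_{|u|=n}S(u)-\frac{3}{2}\log n$ converges in distribution (the minimal-displacement theorem of \cite{Aidekon:2013}), so this family is bounded in probability. Writing $M^\ast_n=\big(\inf_{|u|=n}S(u)-\frac{3}{2}\log n\big)+\log n$, the first summand is tight while $\log n\to\infty$, and on the complement of the survival event $M^\ast_n=+\infty$ by the convention $\inf\varnothing=+\infty$. Hence $\mmp\{M^\ast_n\le K\}\to0$ for every $K>0$, which is exactly (b).

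For (c), put $Y_u:=S(u)-2^{-1}\log n$ for $|u|=n$. Since $e^{-\beta S(u)}=n^{-\beta/2}e^{-\beta Y_u}$,
\begin{equation*}
n^{\beta/2}\sum_{|u|=n}e^{-\beta S(u)}\big(S(u)-2^{-1}\log n\big)^m=\sum_{|u|=n}e^{-\beta Y_u}Y_u^m
\qquad\text{and}\qquad
n^{1/2}W_n=\sum_{|u|=n}e^{-Y_u}.
\end{equation*}
Let $q(y):=e^{-(\beta-1)y}y^m$; since $\beta>1$, the function $q$ is bounded on $\mr^+$, tends to $0$ as $y\to\infty$, and is nonincreasing on $[m/(\beta-1),\infty)$. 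Because $Y_u\ge M^\ast_n$ for every $|u|=n$, on the event $\Omega_n:=\{M^\ast_n\ge m/(\beta-1)\}$ we have
\begin{equation*}
0\le\sum_{|u|=n}e^{-\beta Y_u}Y_u^m=\sum_{|u|=n}q(Y_u)\,e^{-Y_u}\le q(M^\ast_n)\sum_{|u|=n}e^{-Y_u}=q(M^\ast_n)\,n^{1/2}W_n.
\end{equation*}
By (b), $\mmp(\Omega_n)\to1$ and $q(M^\ast_n)\overset{\mmp}{\to}0$ (consistently with $q(+\infty):=0$, which also covers extinction); by (a), $n^{1/2}W_n$ converges in probability, hence is tight. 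As the product of a sequence converging to $0$ in probability and a tight sequence converges to $0$ in probability, and $\mmp(\Omega_n^c)\to0$, the left-hand side above tends to $0$ in probability, which proves (c).

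There is no genuine obstacle here: once (a) and (b) are granted, the estimate for (c) is routine, and all the depth sits in the two cited inputs. The two points deserving a little care are the verification that Condition $\mathcal{S}$ is implied by the hypotheses of the Aïdékon--Shi theorem and of the minimal-displacement theorem, and, in (c), passing first to the event $\Omega_n$ before comparing $q(Y_u)$ with $q(M^\ast_n)$, which is needed because $q$ is only monotone to the right of $m/(\beta-1)$ and $Y_u$ could a priori be negative for small $n$.
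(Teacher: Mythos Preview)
Your argument is essentially correct and, for part (c), cleaner than the paper's. Two remarks.

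For (b), the paper does not invoke the full convergence in law from \cite{Aidekon:2013}; that result requires the nonarithmetic assumption \eqref{eq:nonarithmetic}, which is \emph{not} part of Condition $\mathcal{S}$. Instead, the paper cites Theorem~1.1 in \cite{Mallein:2016}, which gives tightness of $(M_n^\ast-\log n)_{n\in\mn}$ under hypotheses equivalent to Condition $\mathcal{S}$ (the equivalence being Lemma~A.1 in \cite{Mallein:2018}). Your decomposition $M_n^\ast=(\text{tight})+\log n$ is exactly right; just replace the Aïdékon citation by the Mallein tightness result so that the argument goes through in the arithmetic case as well. (Incidentally, you wrote ``Condition $\mathcal{S}$ is implied by the hypotheses'' when you mean the reverse implication.)

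For (c), your route is genuinely different from the paper's. The paper imports an additional input, Proposition~2.1 in \cite{Madaule:2017}, to get tightness of $\sum_{|u|=n}e^{-\alpha(S(u)-(3/2)\log n)}$ for $\alpha>1$, whence $\sum_{|u|=n}e^{-\alpha(S(u)-2^{-1}\log n)}\overset{\mmp}{\to}0$, and then bounds $e^{-\beta x}x^m\le C\,e^{-(\beta-\varepsilon)x}$ on $\{M_n^\ast>0\}$. Your argument avoids this extra citation entirely: on $\{M_n^\ast\ge m/(\beta-1)\}$ you dominate $\sum e^{-\beta Y_u}Y_u^m$ by $q(M_n^\ast)\cdot n^{1/2}W_n$ with $q(y)=e^{-(\beta-1)y}y^m$, and conclude from (a) and (b) alone. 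This is more self-contained; the paper's approach, on the other hand, yields the slightly stronger intermediate statement \eqref{x1}, though that is not used elsewhere.
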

\begin{proof} (a) This is Theorem 1.1 in \cite{Aidekon+Shi:2014}.

\noindent (b) This follows from Theorem 1.1 in \cite{Mallein:2016} which states that the sequence of distributions of $(M_n^\ast-\log n)_{n\in\mn}$ is tight. Noting that
the cited result considers maxima rather than minima we refer to Lemma A.1 in \cite{Mallein:2018} for a proof of the fact that the assumptions imposed in \cite{Mallein:2016} are equivalent to Condition $\mathcal{S}$. 

\noindent (c) Let $\alpha>1$. The sequence of distributions of
$(\sum_{|u|=n}e^{-\alpha (S(u)-(3/2)\log n)})_{n\in\mn}$ is tight by Proposition~2.1 in
\cite{Madaule:2017}.
This implies that
\begin{equation}\label{x1}
\sum_{|u|=n}e^{-\alpha (S(u)-2^{-1}\log
n)}~\overset{\mmp}{\to}~0,\quad n\to\infty.
\end{equation}
Pick any $\varepsilon\in (0,\beta-1)$. Then, for all $x>0$,
$e^{-\beta x}x^m\leq \varepsilon^{-m}m!e^{-(\beta-\varepsilon)x}$.
Using this we obtain, for any $\delta>0$,
\begin{multline*}
\mmp\Big\{\Big|\sum_{|u|=n}e^{-\beta(S(u)-2^{-1}\log
n)}(S(u)-2^{-1}\log n)^m\Big|>\delta\Big\}\\\leq
\mmp\Big\{\sum_{|u|=n}e^{-\beta(S(u)-2^{-1}\log
n)}(S(u)-2^{-1}\log n)^m>\delta, M_n^\ast>0\Big\}+\mmp\{M_n^\ast\leq 0\}\\\leq
\mmp\Big\{\sum_{|u|=n}e^{-(\beta-\varepsilon)(S(u)-2^{-1}\log
n)}>\delta \varepsilon^m/m!\Big\}+\mmp\{M_n^\ast\leq 0\}.
\end{multline*}
Sending $n\to\infty$ we conclude that each summand on the
right-hand side converges to $0$ by \eqref{x1} and part (b) of the
lemma, respectively.
\end{proof}

\subsection{Proof of Theorem \ref{main2}}\label{proofs2}

Put $H(x):=\me Z\1_{\{Z\leq x\}}$ for $x\geq 0$ and let $L^\ast$ be a random
variable which is independent of $\F_\infty$ and has a $1$-stable distribution with the generating triple
$((1-\gamma)(2/(\pi\sigma^2))^{1/2}, (\pi/(2\sigma^2))^{1/2}, 1)$. Note that $L$ has the same distribution as $L^\ast+(2/(\pi\sigma^2))^{1/2}c$.

Only assuming that Condition $\mathcal{S}_{{\rm na}}$ and \eqref{Z} hold we shall prove more general results
\begin{equation}\label{limit_main000}
\me\Big(f\Big(n^{1/2}\Big(Z-\sum_{|u|=n}e^{-S(u)}H(e^{S(u)}n^{-1/2})\Big)\Big)\Big|\F_n\Big)~\overset{\mmp}{\to}~ \E(f(ZL^\ast)|\F_\infty),\quad n\to\infty
\end{equation}
and
\begin{equation}\label{limit_main}
n^{1/2}\Big(Z-\sum_{|u|=n}e^{-S(u)}H(e^{S(u)}n^{-1/2})\Big)~\dod~
ZL^\ast,\quad n\to\infty,
\end{equation}
and then obtain \eqref{limit_main01} and \eqref{limit_inter2} as corollaries. Our argument is based on the following representation
\begin{align*}
  \Theta_n&:=n^{1/2}\Big(Z-\sum_{|u|=n}e^{-S(u)}H(e^{S(u)}n^{-1/2})\Big)\\
  &=n^{1/2}\sum_{|u|=n}e^{-S(u)}\big(Z(u)-H(e^{S(u)}n^{-1/2})\big),\quad
n\in\mn\quad\text{a.s.}
\end{align*}
which follows from \eqref{fixpoint}.

In view of Lemma \ref{aux_limit}, from any deterministic increasing sequence which diverges to $\infty$ we can extract
a subsequence $(n_k)_{k\in\mn}$ such that
\begin{equation}\label{x2}
\lim_{k\to\infty}(\inf_{|u|=n_k}S(u)-2^{-1}\log
n_k)=+\infty\quad\text{a.s.};
\end{equation}
\begin{equation}\label{x3}
\lim_{k\to\infty}n_k^{1/2}\sum_{|u|=n_k}e^{-S(u)}=(2/(\pi\sigma^2))^{1/2}
Z\quad \text{a.s.};
\end{equation}
for $m=1,2$,
\begin{equation}\label{x4}
\lim_{k\to\infty}n_k\sum_{|u|=n_k}e^{-2S(u)}(S(u)-2^{-1}\log
n_k)^m=0\quad\text{a.s.}
\end{equation}

For $n\in\mn_0$ and the $\sigma$-algebra $\F_n$ defined in Section \ref{intro}, we shall use the following notation $\mmp_n\{\cdot\}:=\mmp\{\cdot|\F_n\}$ and, for a random variable $\theta$, $\me_n\theta:=\me(\theta|\F_n)$ and ${\rm Var}_n\theta:={\rm Var}\,(\theta|\F_n)=\me (\theta^2|\F_n)-(\me (\theta|\F_n))^2$.

Suppose we can check that the triangular array
$$(T_{u,k})_{|u|=n_k,\,k\in\mn}:=\big(n_k^{1/2}e^{-S(u)}\big(Z(u)-H(e^{S(u)}n_k^{-1/2})\big)\big)_{|u|=n_k,\,k\in\mn}$$ is a null array, that is, for every $\delta>0$,
\begin{equation}\label{null}
\lim_{k\to\infty}\sup_{|u|=n_k}\mmp_{n_k}\big\{|T_{u,k}|>\delta\big\}=0\quad \text{a.s.},
\end{equation}
that, for every $x>0$,
\begin{align}
M(x) &:= -\lim_{k\to\infty}\,\sum_{|u|=n_k}
\mmp_{n_k}\big\{T_{u,k} > x\big\}=-(2/(\pi\sigma^2))^{1/2}Zx^{-1} \quad \text{a.s.} \label{eq:L(x)>0}   \\
M(-x) &:= \lim_{k\to\infty}\,\sum_{|u|=n_k} \mmp_{n_k}\big\{
T_{u,k}\leq -x\big\}=0 \quad \text{a.s.}; \label{eq:L(x)<0}\\
\sigma^2&:=\lim_{\varepsilon \to 0+} \lim_{k\to\infty}\,
\sum_{|u|=n_k} {\rm Var}_{n_k}\big[T_{u,k}\1_{\{|T_{u,k}|\le \varepsilon\}}\big]=0
\quad   \text{a.s.} \label{eq:sigma}
\end{align}
and, for every $\tau>0$,
\begin{equation}   \label{eq:a}
a_0(\tau):= \lim_{k\to\infty}\,\sum_{|u|=n_k}
\me_{n_k}\big[T_{u,k}\1_{\{|T_{u,k}|\le \tau\}}\big] = (2/(\pi \sigma^2))^{1/2}Z \log \tau \quad \text{a.s.}
\end{equation}
Then, according to Theorem 1 on p.~116 in \cite{Gnedenko+Kolmogorov:1968},
\begin{multline}
\lim_{k\to\infty} \me_{n_k}\big[{\rm i} t \Theta_{n_k}\big]=
\exp\bigg({\rm i} at-\frac{\sigma^2t^2}{2}+\int_{\mr \setminus \{0\}}\bigg(e^{{\rm i}tx}-1-\frac{{\rm i}tx}{1+x^2}\bigg) \, {\rm d}M(x)\bigg)\\
=\exp\bigg((2/(\pi\sigma^2))^{1/2} Z \int_0^\infty \bigg(e^{{\rm
i}tx}-1-\frac{{\rm i}tx}{1+x^2}\bigg)x^{-2}\, {\rm d}x
\bigg)\\=\exp\big(Z({\rm
i}(1-\gamma)(2/(\pi\sigma^2))^{1/2}t-(\pi/(2\sigma^2))^{1/2}|t|(1+{\rm i}\,{\rm
sgn}\,(t)(2/\pi)\log |t|)\big)\quad \text{a.s.}
\label{eq:Kolmogorov's limit relation}
\end{multline}
for $t\in\mr$, where $\gamma$ is the Euler-Mascheroni constant.
Here,
\begin{multline*}
a:= a_0(\tau)-\int_{[-\tau,\,\tau]}\frac{x^3}{1+x^2} \,{\rm d}L(x)
+\int_{\mr \setminus [-\tau,\,\tau]}\frac{x}{1+x^2} \, {\rm
d}L(x)\\ =(2/(\pi\sigma^2))^{1/2} Z\Big(\log \tau-\int_0^\tau \frac{x}{1+x^2} \,{\rm
d}x +\int_\tau^\infty \frac{1}{x(1+x^2)} \, {\rm d}x\Big)=0,
\end{multline*}
and the last equality in \eqref{eq:Kolmogorov's limit relation} follows from calculations given on p.\;170
in \cite{Gnedenko+Kolmogorov:1968}. However, the constant
$1-\gamma$ is not given explicitly in
\cite{Gnedenko+Kolmogorov:1968} and rather represented as the
integral $$\Gamma:=\int_0^\infty\Big(\frac{\sin
x}{x^2}-\frac{1}{x(1+x^2)}\Big){\rm d}x.$$ To evaluate it, write
$$\Gamma=\int_0^\infty\Big(\frac{\sin
x}{x^2}-\frac{1}{x(1+x)}\Big){\rm
d}x+\int_0^\infty\Big(\frac{1}{x(1+x)}-\frac{1}{x(1+x^2)}\Big){\rm
d}x.$$ While the first integral is equal to $1-\gamma$ by formula
(3.781.1) in \cite{Gradshteyn+Ryzhik:2007}, the second is equal to
$0$ which can be seen by direct calculation. Equivalently, we have shown that, for every bounded continuous function $f:\mr \to \mr$,
\begin{equation*}
\lim_{k\to\infty} \me\big(f(n_k^{1/2}\Big(Z-\sum_{|u|=n_k}e^{-S(u)}H(e^{S(u)}n_k^{-1/2})\Big)\big |\F_{n_k}\big)=\me \big(f(ZL^\ast)|\F_\infty\big)\quad  \text{ a.s.}
\end{equation*}
which, by a standard argument, entails \eqref{limit_main000}.

To obtain distributional convergence \eqref{limit_main} just observe that \eqref{limit_main000} and the Lebesgue dominated convergence theorem guarantee $$\lim_{n\to\infty} \me f\Big(n^{1/2}\Big(Z-\sum_{|u|=n}e^{-S(u)}H(e^{S(u)}n^{-1/2})\Big)\Big |\F_n\Big)=\me f(ZL^\ast)$$ which is equivalent to \eqref{limit_main}.

Thus, we are left with proving \eqref{null} through \eqref{eq:a}. As a preparation, denote by $F(x):=\mmp\{Z\leq x\}$ for $x\in \R$, the distribution
function of $Z$, and recall that $Z$ is nonnegative random variable, whence $F(x)=0$ for $x<0$. Condition \eqref{Z} reads
\begin{equation}\label{x5}
\lim_{t\to\infty} t(1-F(t))=1.
\end{equation}
Further, by Lemma \ref{link1}, relation \eqref{x5} is equivalent to
the following: for each $\lambda>0$,
\begin{equation}\label{x6}
\lim_{t\to\infty} (H(\lambda t)-H(t))=\log\lambda
\end{equation}
and implies that
\begin{equation}\label{Hlog}
H(t)\sim \log t,\quad t\to\infty
\end{equation}
(alternatively, \eqref{Hlog} also holds by Theorem \ref{expa0}).

For any $z\in\R$ and $u$ with $|u|=n_k$, put
$$a(z,u,k):=ze^{S(u)} n_k^{-1/2} + H(e^{S(u)}n_k^{-1/2}),$$ so that $\{T_{u,k}>z\} = \{Z(u)>a(z,u, k)\}$. As a consequence of \eqref{x2} and $\lim_{x\to\infty}x^{-1}H(x)=0$ (use \eqref{Hlog} for the latter), the first term of $a(z,u, k)$ dominates which entails $\lim_{k\to\infty}{\rm sgn}(z)a(z,u,k)=+\infty$ a.s.\ for $z\neq 0$. Using \eqref{x5} and independence of $Z(u)$ for $u$ with $|u|=n_k$ and $\F_{n_k}$ we obtain, for $z>0$,
\begin{equation}\label{eq:ptu}
\P_{n_k}\big\{T_{u,k} >z\big\}=1- F(a(z,u,k))~ \sim~ a(z,u,k)^{-1}~ \sim~ z^{-1} n_k^{1/2} e^{-S(u)}\quad\text{a.s.}
\end{equation}
as $k\to \infty$.
By a similar reasoning, for $z>0$, $u$ with $|u|=n_k$ and large enough $k$,
$$F(a(-z,\varepsilon,k))=0\quad \text{a.s.} \quad \mbox{and}
\quad \1_{\{|T_{u,k}| \leq z\}}=\1_{\{T_{u,k} \leq z\}}\quad\text{a.s.}$$
We shall repeatedly use these observations, without further notice.

\noindent {\sc Proof of \eqref{null}}. In view of \eqref{eq:ptu}, for each $\delta>0$,
$$\sup_{|u|=n_k}\mmp_{n_k}\big\{|T_{u,k}|>\delta\big\} ~\sim~ \sup_{|u|=n_k}\delta^{-1} e^{-S(u)}n_k^{1/2}=\delta^{-1}\exp(-\inf_{|u|=n_k}(S(u)-2^{-1}\log n_k))\quad \text{a.s.}$$ as $k\to\infty$. By \eqref{x2}, the right-hand side converges to $0$ a.s.\ as $k\to\infty$ which proves \eqref{null}.

\noindent {\sc Proofs of \eqref{eq:L(x)>0} and \eqref{eq:L(x)<0}}.
By another appeal to \eqref{eq:ptu}, for any $x> 0$,
$$\sum_{|u|=n_k} \mmp_{n_k}\{T_{u,k}>x\}~ \sim~x^{-1} n_k^{1/2}\sum_{|u|=n_k}e^{-S(u)}~\to~(2/(\pi\sigma^2))^{1/2}Zx^{-1}\quad \text{a.s.}$$ as $k \to \infty$ which proves \eqref{eq:L(x)>0}. Here, the limit relation is a consequence of \eqref{x3}. The proof of
\eqref{eq:L(x)<0} is easy: for large enough $k$,
$$\sum_{|u|=n_k} \mmp_{n_k}\big\{T_{u,k} \le -x  \big\}= \sum_{|u|=n_k}F(a(-x, u, k))=0 \quad \text{a.s.}$$

\noindent {\sc Proof of \eqref{eq:sigma}}. First, note that
according to Theorem 1.6.4 in \cite{Bingham+Goldie+Teugels:1989},
relation \eqref{x5} entails
\begin{equation}\label{asy_H2}
H_2(t):=\me Z^2\1_{\{Z\leq t\}}~\sim~ t,\quad t\to\infty.
\end{equation}
For $\varepsilon > 0$ and large enough $k$,
\begin{multline*}
\sum_{|u|=n_k} {\rm Var}_{n_k}\! \big[T_{u,k}\1_{\{|T_{u,k}| \leq \varepsilon\}} \big]
\leq \sum_{|u|=n_k}\me_{n_k}\! \big[n_k
e^{-2S(u)}(Z(u)-H(e^{S(u)}n_k^{-1/2}))^2\1_{\{Z(u)\leq a(\varepsilon,u,k)\}} \big]\\
\leq~ n_k
\sum_{|u|=n}e^{-2S(u)}\Big(H_2\big(a(\varepsilon,u,k)\big)-2H\big(e^{S(u)}n_k^{-1/2}\big) H\big(a(\varepsilon,u,k)\big)+H^2(e^{S(u)}n_k^{-1/2})\Big)\\=:I_1(n_k)-2I_2(n_k)+I_3(n_k).
\end{multline*}
For the first inequality we have used the fact that the
conditional variance does not exceed the conditional second moment.
Further, we investigate each term $I_j(n_k)$, $j=1,2,3$
separately. By \eqref{asy_H2}, \eqref{x2} and \eqref{Hlog}, as $k\to\infty$,
$$I_1(n_k)~\sim~ n_k \sum_{|u|=n_k}e^{-2S(u)}a(\varepsilon,u,k)~\sim~ \varepsilon
n_k^{1/2}\sum_{|u|=n_k} e^{-S(u)}\quad\text{a.s.}$$ According to \eqref{x3}, the
last expression converges to $\varepsilon (2/(\pi\sigma^2))^{1/2}Z$ a.s.\ as
$k\to\infty$ which, in its turn, converges to $0$ a.s.\ as $\varepsilon\to 0+$.
By \eqref{x2} and \eqref{Hlog}, as $k\to\infty$, $$I_2(n_k)~\sim~n_k \sum_{|u|=n}e^{-2S(u)}(S(u)-2^{-1}\log
n_k)(\log \varepsilon+S(u)-2^{-1}\log n_k)\quad\text{a.s.}$$ In view of
\eqref{x4}, this and $I_3(n_k)$ converge to $0$ a.s.\ as
$k\to\infty$. The proof of \eqref{eq:sigma} is complete.

\noindent {\sc Proof of \eqref{eq:a}}. For each $\tau > 0$ and large $k$,
\begin{multline*}
\sum_{|u|=n_k} \me_{n_k}\big[T_{u,k}\1_{|T_{u,k}|\leq \tau\}} \big]=
n_k^{1/2} \sum_{|u|=n_k}e^{-S(u)}\big(H(a(\tau, u,k))-H(e^{S(u)}n_k^{-1/2})\big)\\+
n_k^{1/2} \sum_{|u|=n_k}e^{-S(u)}H(e^{S(u)}n_k^{-1/2})\big(1-F(a(\tau,u,k))\big):=J_1(n_k)+J_2(n_k).
\end{multline*}
Arguing as in the proof of \eqref{eq:L(x)>0} we conclude that, as
$k\to\infty$,
$$J_2(n_k)~\sim~\tau^{-1} n_k
\sum_{|u|=n_k}e^{-2S(u)}H(e^{S(u)}n_k^{-1/2})~\sim~\tau^{-1}n_k
\sum_{|u|=n_k}e^{-2S(u)}(S(u)-2^{-1}\log n_k)\quad \text{a.s.}$$
Here, the second equivalence is a consequence of \eqref{Hlog}. In
view of \eqref{x4}, $\lim_{k\to\infty}J_2(n_k)=0$ a.s. Passing to
the analysis of $J_1(n_k)$ we first note that, for each $\tau>0$
and $u$ with $|u|=n_k$,
\begin{equation}\label{intermed}
\lim_{k\to\infty} \big(H\big(\tau
e^{S(u)}n_k^{-1/2}+H(e^{S(u)}n_k^{-1/2})\big)-H\big(e^{S(u)}n_k^{-1/2}\big)\big)=\log\tau\quad\text{a.s.}
\end{equation}
Indeed, $H\big(\tau
e^{S(u)}n_k^{-1/2}+H(e^{S(u)}n_k^{-1/2})\big)-H\big(e^{S(u)}n_k^{-1/2}\big)\geq
H\big(\tau e^{S(u)}n_k^{-1/2}\big)-H\big(e^{S(u)}n_k^{-1/2}\big)$,
and recalling \eqref{x2}, the right-hand side converges to $\log
\tau$ a.s.\ as $k\to\infty$ by \eqref{x6}. In the converse
direction, observe that, in view of \eqref{Hlog},
$\lim_{t\to\infty}t^{-1}H(t)=0$. This in combination with
\eqref{x2} ensures that given $\delta>0$ we have, for large enough
$k$, that $$H\big(\tau
e^{S(u)}n_k^{-1/2}+H(e^{S(u)}n_k^{-1/2})\big)-H\big(e^{S(u)}n_k^{-1/2}\big)\leq
H\big((\tau+\delta)
e^{S(u)}n_k^{-1/2}\big)-H\big(e^{S(u)}n_k^{-1/2}\big).$$ Using
\eqref{x6} and sending first $k\to\infty$ and then $\delta\to 0+$
we conclude that the limit superior in \eqref{intermed} does not
exceed $\log \tau$. This completes the proof of \eqref{intermed}.
Invoking now \eqref{intermed} and \eqref{x2} yields, as
$k\to\infty$
$$J_1(n_k)~\sim~ (\log \tau) n_k^{1/2}
\sum_{|u|=n_k}e^{-S(u)}\quad\text{a.s.}$$ Hence, by \eqref{x3},
$\lim_{k\to\infty}J_1(n_k)=(\log\tau) (2/(\pi\sigma^2))^{1/2}Z$ a.s. The proof of \eqref{limit_main000} and \eqref{limit_main} is complete.

Assume now that Conditions $\mathcal{S}_{\rm na}$ and $\mathcal{S}^\ast$ hold. Put $\tilde H(x):=H(x)-\log x$ for $x>0$. By Theorem \ref{expa}, $\lim_{x\to\infty}\tilde H(x)=c$. This in combination with Lemma \ref{aux_limit} (a,b) ensures that $$n^{1/2}\sum_{|u|=n}e^{-S(u)}\tilde H(e^{S(u)}n^{-1/2})~\overset{\mmp}{\to}~(2/(\pi\sigma^2))^{1/2}cZ,\quad n\to\infty.$$ A minor modification of the proof of \eqref{eq:a} which takes into account the last limit relation justifies \eqref{limit_main01} and \eqref{limit_inter2}.

\section{Appendix}

\subsection{A link between a distribution tail and the Laplace transform}

In this section we give two results which connect the asymptotic behavior of a distribution tail at $\infty$ with that of the corresponding Laplace-Stieltjes transform at $0$.
\begin{lemma}\label{link1}
Let $b>0$ and $X$ be a nonnegative random variable with Laplace
transform $\phi^\ast(s):=\me e^{-sX}$ for $s\geq 0$. For $s>0$, set
$\psi^\ast(s):=s^{-1}(1-\varphi^\ast(s))$,
$$G^\ast(s):=\int_0^s \mmp\{X>y\}{\rm d}y\quad\text{and}\quad H^\ast(s):=\me
X\1_{\{X\leq s\}}.$$ The following assertions are equivalent:

\noindent (i) $\lim_{t\to\infty}t\mmp\{X>t\}=b$;

\noindent (ii) for each $\lambda>0$, $\lim_{s\to
0+}(\psi^\ast(s/\lambda)-\psi^\ast(s))=b\log \lambda$;

\noindent (iii) for each $\lambda>0$, $\lim_{t\to\infty}(G^\ast(\lambda
t)-G^\ast(t))=b\log \lambda$;

\noindent (iv) for each $\lambda>0$, $\lim_{t\to\infty}(H^\ast(\lambda
t)-H^\ast(t))=b\log \lambda$.

Either of these entails
\begin{equation}\label{limit_h}
\psi^\ast(1/t)~\sim~G^\ast(t)~\sim~ H^\ast(t)~\sim~ b\log t,\quad t\to\infty.
\end{equation}
\end{lemma}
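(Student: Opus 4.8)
The plan is to push everything through the tail $\bar F(y):=\mmp\{X>y\}$, which is nonincreasing with $\bar F(y)\downarrow 0$, using two elementary identities together with the de Haan-class (class $\Pi$) Abelian and Tauberian theorems from the Appendix. The first identity is $\psi^\ast(s)=\int_0^\infty \mathrm{e}^{-sy}\bar F(y)\,\mathrm{d}y$, obtained from $1-\phi^\ast(s)=\int_{[0,\infty)}(1-\mathrm{e}^{-sy})\,\mathrm{d}F(y)$ by substituting $1-\mathrm{e}^{-sy}=\int_0^y s\mathrm{e}^{-sz}\,\mathrm{d}z$ and applying Fubini; thus $\psi^\ast(1/t)$ is the Laplace transform of $\bar F$ evaluated at $1/t$. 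The second identity is $G^\ast(s)=H^\ast(s)+s\bar F(s)$, which follows from $\me\min(X,s)=\int_0^s\mmp\{X>y\}\,\mathrm{d}y$ and $\me\min(X,s)=\me X\1_{\{X\le s\}}+s\mmp\{X>s\}$. Under these reductions the four assertions read: (i) $t\bar F(t)\to b$; (iii) $G^\ast$ lies in $\Pi$ at $\infty$ with constant auxiliary function $b$; (iv) the same for $H^\ast$; (ii) the same for $\psi^\ast$ at the origin. I would establish (i)$\Rightarrow$(ii),(iii),(iv) and each of (iii)$\Rightarrow$(i), (iv)$\Rightarrow$(i), (ii)$\Rightarrow$(i), which gives all the equivalences.

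The Abelian implications are routine: if $y\bar F(y)\to b$ then $G^\ast(\lambda t)-G^\ast(t)=\int_t^{\lambda t}y^{-1}\big(y\bar F(y)\big)\,\mathrm{d}y\to b\log\lambda$, which is (iii); since then also $t\bar F(t)\to b$, the second identity turns this into $H^\ast(\lambda t)-H^\ast(t)\to b\log\lambda$, which is (iv); and (ii) is the Abelian half-line Laplace estimate for the tail $\bar F\in RV_{-1}$ in the logarithmic boundary case. For the converse (iii)$\Rightarrow$(i) I would use monotonicity of $\bar F$ as a Tauberian side condition directly: for $\lambda>1$, $(\lambda-1)t\bar F(\lambda t)\le G^\ast(\lambda t)-G^\ast(t)\le(\lambda-1)t\bar F(t)$; the right-hand inequality yields $\liminf_{t\to\infty}t\bar F(t)\ge b\log\lambda/(\lambda-1)$, the left-hand one (after the substitution $t\mapsto t/\lambda$) yields $\limsup_{t\to\infty}t\bar F(t)\le b\lambda\log\lambda/(\lambda-1)$, and letting $\lambda\downarrow1$, so that $\log\lambda/(\lambda-1)\to1$, gives $t\bar F(t)\to b$. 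This is the monotone density theorem for $\Pi$ in disguise.

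The remaining two implications (iv)$\Rightarrow$(i) and (ii)$\Rightarrow$(i) are the genuinely Tauberian ones. For (iv)$\Rightarrow$(i) one has the analogous two-sided bound $t\big(\bar F(t)-\bar F(\lambda t)\big)\le H^\ast(\lambda t)-H^\ast(t)\le\lambda t\big(\bar F(t)-\bar F(\lambda t)\big)$ for $\lambda>1$; iterating this along a geometric scale $t,\lambda t,\lambda^2 t,\dots$ and using $\bar F(\lambda^n t)\to0$ recovers $t\bar F(t)\to b$ (equivalently, rewrite $\bar F(t)=\int_{(t,\infty)}y^{-1}\,\mathrm{d}H^\ast(y)$ and apply the de Haan monotone-density theorem to $H^\ast\in\Pi$). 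For (ii)$\Rightarrow$(i) I would invoke the de Haan-class Tauberian theorem for Laplace transforms in the logarithmic case, monotonicity of $\bar F$ once more being the Tauberian condition. Finally, \eqref{limit_h}: once (i) holds, $G^\ast(t)=\int_0^t\bar F\sim b\log t$ by a Cesàro argument on the logarithmic scale applied to $y\bar F(y)\to b$, then $H^\ast(t)=G^\ast(t)-t\bar F(t)\sim b\log t$, and $\psi^\ast(1/t)=\int_0^\infty\mathrm{e}^{-y/t}\bar F(y)\,\mathrm{d}y\sim b\log t$ by the same Abelian estimate; alternatively, each of these three functions lies in $\Pi$ with constant auxiliary function $b$ and hence is asymptotic to $\int^{\,\cdot}b\,u^{-1}\,\mathrm{d}u=b\log(\cdot)$ by de Haan's representation theorem.

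The formal parts (the two identities, the Abelian directions, the (iii)$\Rightarrow$(i) squeeze, the computations for \eqref{limit_h}) are routine; the substance is concentrated in the Tauberian passages (ii)$\Rightarrow$(i) and (iv)$\Rightarrow$(i), which are not elementary and rest on the de Haan-class Tauberian theorems stated in the Appendix, with monotonicity of $\bar F$ supplying the Tauberian side condition — this is the main obstacle. A secondary technical point is to secure uniformity of convergence over compact ranges of $\lambda$ when upgrading the differenced relations (ii), (iii), (iv) to the asymptotic equivalences of \eqref{limit_h}, and to check that atoms of $X$ — hence the left- or right-continuity conventions for $F$, $G^\ast$, $H^\ast$ — are immaterial for all the limit statements involved.
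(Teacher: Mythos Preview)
Your proposal is correct and follows essentially the same approach as the paper: both hinge on the identity $\psi^\ast(s)=\int_0^\infty e^{-sy}\bar F(y)\,\mathrm{d}y$ (equivalently, $\psi^\ast$ is the Laplace--Stieltjes transform of $G^\ast$), the integration-by-parts relation $H^\ast(t)=G^\ast(t)-t\bar F(t)$, the de Haan class Tauberian theorems from Bingham--Goldie--Teugels (Theorems 3.6.8 and 3.9.1 there, not in the present paper's Appendix as you wrote), and the geometric telescoping argument for (iv)$\Rightarrow$(i). The only difference is organizational: the paper pivots on (iii), deducing (i)$\Leftrightarrow$(iii) and (ii)$\Leftrightarrow$(iii) directly from the cited BGT theorems and then handling (iv), whereas you pivot on (i) and spell out the Abelian directions and the (iii)$\Rightarrow$(i) squeeze explicitly; the mathematical content is the same.
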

\begin{remark}
Recall that functions $\psi^\ast$, $G^\ast$ and $H^\ast$ satisfying the
assumptions (ii), (iii) and (iv) of Lemma \ref{link1} belong to the de Haan class. In
particular, these functions are slowly varying ($\psi^\ast$ at zero,
$G^\ast$ and $H^\ast$ at $\infty$). Relation \eqref{limit_h} makes the last statement even more precise, showing that all these functions are asymptotically equivalent to the logarithm.
\end{remark}
\begin{proof}
The equivalence (i) $\Leftrightarrow$ (iii) follows from Theorem
3.6.8 in \cite{Bingham+Goldie+Teugels:1989}. The equivalence (ii)
$\Leftrightarrow$ (iii) follows from Theorem 3.9.1 in
\cite{Bingham+Goldie+Teugels:1989} after noting that
\begin{equation}\label{psi}
\psi^\ast(s)=\int_{[0,\,\infty)}e^{-sy}{\rm d}G^\ast(y),\quad s>0.
\end{equation}

\noindent {\sc Proof of} (iii) $\Rightarrow$ (iv). Integration by
parts yields
\begin{equation}\label{equ}
H^\ast(t)=\int_{[0,\,t]}y{\rm d}\mmp\{X\leq y\}=\int_0^t\mmp\{X>y\}{\rm
d}y-t\mmp\{X>t\}=G^\ast(t)-t\mmp\{X>t\}.
\end{equation}
According to the equivalence (i) $\Leftrightarrow$ (iii),
$\lim_{t\to\infty}t\mmp\{X>t\}=b$. Hence, invoking (iii) we arrive
at (iv).

\noindent {\sc Proof of} (iv) $\Rightarrow$ (i). Write, for any
$\delta>1$,
\begin{multline*}
t\mmp\{X>t\}=t\sum_{n\geq 1}\mmp\{t\delta^{n-1}<X\leq
t\delta^n\}\geq \sum_{n\geq 1}\delta^{-n} \int_{(t\delta^{n-1},\,
t\delta^n]}y{\rm d}\mmp\{X\leq y\}\\=\sum_{n\geq
1}\delta^{-n}(H^\ast(t\delta^n)-H^\ast(t\delta^{n-1})).
\end{multline*}
Relation (iv) entails that given $\varepsilon>0$
$$H^\ast(t\delta^n)-H^\ast(t\delta^{n-1})\geq b\log\delta-\varepsilon$$ for large enough $t$,
whence, for such $t$, $$t\mmp\{X>t\}\geq
(b\log\delta-\varepsilon))\sum_{n\geq
1}\delta^{-n}=(\delta-1)^{-1}(b\log\delta-\varepsilon).$$ Sending
first $\varepsilon\to 0+$ and then $\delta\to 1-$ we obtain
${\lim\inf}_{t\to\infty} t\mmp\{X>t\}\geq b$. A symmetric argument
proves the converse inequality for the limit superior.

Further, it is trivial that (i) entails $G^\ast(t)\sim b\log t$ as $t\to\infty$. With this at hand, $H^\ast(t)\sim b\log t$ as $t\to\infty$ is a consequence of \eqref{equ} and (i). Finally, $\psi^\ast(1/t)\sim G^\ast(t)$ as $t\to\infty$ follows from (ii) (or (iii)) and Theorem 3.9.1 in \cite{Bingham+Goldie+Teugels:1989}.
\end{proof}

\begin{lemma}\label{link}
Let $b>0$, $c\in\mr$ and $X$ be a nonnegative random variable with
Laplace transform $\phi^\ast$. The following assertions are
equivalent:

\noindent (I) $\psi^\ast(s)=s^{-1}(1-\phi^\ast(s))=-b\log s-\gamma+c+o(1)$ as $s\to 0+$, where $\gamma$ is the
Euler-Mascheroni constant;

\noindent (II) $G^\ast(t)=\int_0^t \mmp\{X>y\}{\rm d}y=b \log t+c+o(1)$ as $t\to\infty$;

\noindent (III) $H^\ast(t)=\me X\1_{\{X\leq t\}}=b \log t+c-b+o(1)$ as $t\to\infty$.
\end{lemma}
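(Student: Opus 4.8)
The plan is to establish the chain (I) $\Leftrightarrow$ (II) $\Leftrightarrow$ (III). The basic bridge is the Laplace representation $\psi^\ast(s)=\int_0^\infty e^{-sy}\mmp\{X>y\}\,\dd y$, which is \eqref{psi} rewritten using $\dd G^\ast(y)=\mmp\{X>y\}\,\dd y$, together with the elementary identity \eqref{equ}, namely $H^\ast(t)=G^\ast(t)-t\,\mmp\{X>t\}$.

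First I would record a preliminary reduction: each of (I), (II) and (III) forces $\lim_{t\to\infty}t\,\mmp\{X>t\}=b$. Indeed, subtracting two instances of (I) (respectively (II), (III)) shows that, for every $\lambda>0$, $\psi^\ast(s/\lambda)-\psi^\ast(s)\to b\log\lambda$ as $s\to0+$ (respectively $G^\ast(\lambda t)-G^\ast(t)\to b\log\lambda$ and $H^\ast(\lambda t)-H^\ast(t)\to b\log\lambda$ as $t\to\infty$), which is exactly assertion (ii) (respectively (iii), (iv)) of Lemma \ref{link1}; hence assertion (i) of that lemma holds. With $t\,\mmp\{X>t\}\to b$ in hand, the equivalence (II) $\Leftrightarrow$ (III) is immediate from \eqref{equ}, since $H^\ast(t)-G^\ast(t)=-t\,\mmp\{X>t\}=-b+o(1)$, so that $G^\ast(t)=b\log t+c+o(1)$ holds if and only if $H^\ast(t)=b\log t+(c-b)+o(1)$.

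The remaining and genuinely Tauberian part is (I) $\Leftrightarrow$ (II). Here I would set $\tilde h(y):=\mmp\{X>y\}-\tfrac{b}{y}\ind{y\ge1}$ and use two observations. On the one hand, $\int_0^t\tilde h(y)\,\dd y=G^\ast(t)-b\log t$ for every $t\ge1$, so (II) says precisely that the improper integral $\int_0^\infty\tilde h(y)\,\dd y$ converges, with value $c$. On the other hand, splitting the Laplace transform and using the classical small-argument expansion of the exponential integral, $\int_1^\infty e^{-sy}\tfrac{b}{y}\,\dd y=b\int_s^\infty\tfrac{e^{-u}}{u}\,\dd u=-b\log s-b\gamma+o(1)$ as $s\to0+$ (equivalently, $\int_0^\infty e^{-u}\log u\,\dd u=-\gamma$), one gets
\begin{equation*}
\psi^\ast(s)=\int_0^\infty e^{-sy}\tilde h(y)\,\dd y-b\log s-b\gamma+o(1),\qquad s\to0+.
\end{equation*}
Hence, after identifying the additive constants, (I) amounts to $\int_0^\infty e^{-sy}\tilde h(y)\,\dd y\to c$ as $s\to0+$. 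The two directions then follow from the classical Abelian and Tauberian theorems for the Laplace transform applied to the locally integrable function $\tilde h$: if $\int_0^\infty\tilde h$ converges, i.e. (II) holds, then $\int_0^\infty e^{-sy}\tilde h(y)\,\dd y\to\int_0^\infty\tilde h$ by Abel's theorem, giving (I); conversely, the preliminary reduction supplies $y\tilde h(y)=y\,\mmp\{X>y\}-b\to0$, which is precisely Tauber's side condition, so $\int_0^\infty e^{-sy}\tilde h(y)\,\dd y\to c$ forces $\int_0^\infty\tilde h$ to converge with value $c$, i.e. (II).

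I expect the only delicate point to be the Tauberian implication (I) $\Rightarrow$ (II): it is crucial that the side condition $y\tilde h(y)\to0$ be available beforehand, and this is exactly what the reduction to Lemma \ref{link1} buys; apart from that, the sole non-routine computation is the exponential-integral asymptotics that produces the $\gamma$-term. (In the regime $b=1$ relevant to the applications, the constant on the right of the last display is $-\log s-\gamma$, which matches (I) verbatim.)
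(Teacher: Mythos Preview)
Your argument is correct. For (II) $\Leftrightarrow$ (III) you do exactly what the paper does: extract $t\,\mmp\{X>t\}\to b$ from Lemma \ref{link1} and apply \eqref{equ}.

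For (I) $\Leftrightarrow$ (II) the routes differ. The paper observes that (I) (resp.\ (II)) puts $\psi^\ast$ (resp.\ $G^\ast$) into the de Haan class with index $b$ and then invokes Theorem 3.9.1 of Bingham--Goldie--Teugels, which in one stroke yields $G^\ast(t)=\psi^\ast(1/t)+b\gamma+o(1)$; substituting the assumed expansion finishes. You instead subtract the explicit comparison density $b y^{-1}\1_{\{y\ge 1\}}$, compute the exponential-integral asymptotics to isolate the $\gamma$-term, and reduce the problem to the classical Abel and Tauber theorems for $\tilde h$; the Tauberian side condition $y\tilde h(y)\to 0$ is supplied by your preliminary reduction through Lemma \ref{link1}. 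The paper's approach is shorter (a single citation), while yours is more elementary and makes transparent where the Euler constant actually comes from. You also correctly note that for general $b$ the constant in (I) should read $-b\gamma$ rather than $-\gamma$; the paper's applications only use $b=1$, where the two coincide.
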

\begin{proof}
{\sc Proof of (I) $\Leftrightarrow$ (II)}. Let $\lambda>0$. Condition (I) ensures that $\lim_{s\to 0+}(\psi^\ast(s/\lambda)-\psi^\ast(s))=b\log
\lambda$. Recalling \eqref{psi} and invoking Theorem 3.9.1 in
\cite{Bingham+Goldie+Teugels:1989} we infer
$G^\ast(t)=\psi^\ast(1/t)+\gamma+o(1)$ as $t\to\infty$. This in conjunction
with (I) proves (II). In the converse direction, condition
(II) guarantees that $\lim_{t\to \infty}(G^\ast(\lambda t)-G^\ast(t))=b\log \lambda$. Another appeal to
Theorem 3.9.1 in \cite{Bingham+Goldie+Teugels:1989} allows us to
conclude that $\psi^\ast(s)=G^\ast(1/s)-\gamma+o(1)$ as $s\to 0+$, whence
(I).

\noindent {\sc Proof of (II) $\Rightarrow$ (III)}. As a consequence of (II), for each $\lambda>0$,
$\lim_{t\to\infty}(G^\ast(\lambda t)-G^\ast(t))=b\log \lambda$. Hence, $\lim_{t\to\infty}t\mmp\{X>t\}=b$ by the implication (iii)
$\Rightarrow$ (i) of Lemma \ref{link1}. With this, \eqref{equ} and
(II) at hand we obtain
$$H^\ast(t)=G^\ast(t)-t\mmp\{X>t\}=b\log t+c-b+o(1),\quad t\to\infty.$$

\noindent {\sc Proof of (III) $\Rightarrow$ (II)}. Relation
(III) implies that, for each $\lambda>0$, $\lim_{t\to\infty}(H^\ast(\lambda t)-H^\ast(t))=b \log \lambda$. Hence, by the
implication (iv) $\Rightarrow$ (i) of Lemma \ref{link1},
$\lim_{t\to\infty}t\mmp\{X>t\}=b$. Now (III) together with
\eqref{equ} ensures (II).
\end{proof}

\subsection{Results on standard random walks}
Here are some general results on the renewal functions associated
to the ascending or descending ladder height processes of a
centered random walk with finite variance.
\begin{lemma}\label{don}
Let $(T_n)_{n\in\mn_0}$ be a standard random walk with $T_0=0$,
$\me T_1=0$ and $\me T_1^2\in (0,\infty)$. Further, let
$\tau^\prime_-$ and $\tau^\prime_+$ denote a strictly or weakly
descending and a strictly or weakly ascending ladder epoch for
$(T_n)_{n\in\mn_0}$ and $\bar U$ the renewal function for the
standard random walk with jumps having the same distribution as
$|T_{\tau^\prime_-}|$ or $T_{\tau^\prime_+}$. Then

\noindent (a) $\me |T_{\tau^\prime_\pm}|<\infty$; for $\beta>2$,
$\me (T_1)_-^\beta<\infty$ is equivalent to $\me
|T_{\tau^\prime_-}|^{\beta-1}<\infty$ and $\me
(T_1)^\beta_+<\infty$ is equivalent to $\me T_{\tau^\prime_+}^{\beta-1}<\infty$;

\noindent (b) $\lim_{x\to\infty}x^{-1}\bar U(x)=(\me
|T_{\tau^\prime_\pm}|)^{-1}$.

\noindent (c) If the distribution of $T_1$ is
nonarithmetic/$d$-arithmetic for $d>0$, then so is the
distribution of $T_{\tau^\prime_\pm}$.

\noindent (d) Assume that the distribution of $T_1$ is nonarithmetic. Then $$\lim_{x\to\infty}((\me|T_{\tau^\prime_\pm}|)\bar U(x)-x)=c$$ for a finite constant $c$ if, and only if, $\me (T_1)_\pm^3<\infty$. If it is the case, then $c=(2\me |T_{\tau^\prime_\pm}|)^{-1}\me T_{\tau^\prime_\pm}^2$.
\end{lemma}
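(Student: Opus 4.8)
Parts (a), (b) and (c) are classical facts from the fluctuation theory of centered random walks, so I would dispatch them quickly: finiteness of $\me|T_{\tau'_\pm}|$ under $\me T_1=0$, $\me T_1^2<\infty$ is standard (e.g.\ via the Wiener--Hopf factorization), the moment transfer $\me(T_1)_\pm^\beta<\infty\Leftrightarrow\me|T_{\tau'_\pm}|^{\beta-1}<\infty$ for $\beta>2$ is Chow's theorem on moments of ladder variables, (b) is the elementary renewal theorem applied to the ladder-height renewal sequence (whose increment has finite positive mean by (a)), and (c) is the classical inheritance of (non)arithmeticity by ladder heights. The substance is (d). First I would invoke (a) with $\beta=3$ to rephrase the moment hypothesis, $\me T_{\tau'_\pm}^2<\infty\Leftrightarrow\me(T_1)_\pm^3<\infty$, and (c) to pass from nonarithmeticity of $T_1$ to nonarithmeticity of $|T_{\tau'_\pm}|$. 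So (d) reduces to the purely renewal-theoretic claim: if $Y_1,Y_2,\dots$ are independent copies of $|T_{\tau'_\pm}|$, $\mu:=\me Y_1\in(0,\infty)$, with nonarithmetic common law, and $\bar U$ is their renewal function, then $\lim_{x\to\infty}(\mu\bar U(x)-x)$ exists and is finite if and only if $\me Y_1^2<\infty$, and then the limit equals $\me Y_1^2/(2\mu)=\me T_{\tau'_\pm}^2/(2\me|T_{\tau'_\pm}|)$.

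The key step is to turn $x\mapsto\mu\bar U(x)-x$ into the solution of a renewal equation on $\mr^+$. Working with $\bar U(x)=\sum_{n\ge0}\mmp\{Y_1+\dots+Y_n\le x\}$ (the left-continuous variant used elsewhere differs by a quantity that vanishes at $\infty$ in the nonarithmetic case and plays no role here), I would set $Z(x):=\mu\bar U(x)-x$ for $x\ge0$ and $Z(x):=0$ for $x<0$, and then, starting from $\bar U=\1_{[0,\infty)}+F*\bar U$ with $F$ the law of $Y_1$, a short computation should give, for $x\ge0$,
\[
Z(x)=z_0(x)+\int_{[0,\,x]}Z(x-y)\,\dd F(y),\qquad z_0(x):=\int_x^\infty\mmp\{Y_1>t\}\,\dd t,
\]
the arithmetic being that $\mu+\int_{[0,x]}(x-y)\,\dd F(y)-x=\int_{(x,\infty)}(y-x)\,\dd F(y)=z_0(x)$. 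Solving, $Z=z_0*\bar U\ge0$, i.e.\ $Z(x)=\int_{[0,\,x]}z_0(x-y)\,\dd\bar U(y)$, where $z_0$ is nonincreasing and right-continuous with $z_0(0)=\mu$ and $\int_0^\infty z_0=\int_0^\infty t\,\mmp\{Y_1>t\}\,\dd t=\tfrac12\me Y_1^2\in(0,\infty]$.

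Both halves of the equivalence then follow from the key renewal theorem. If $\me Y_1^2<\infty$, then $z_0$ is nonincreasing and Lebesgue integrable on $\mr^+$, hence directly Riemann integrable (Lemma \ref{dri}), and, $Y_1$ being nonarithmetic, the key renewal theorem applied to $Z=z_0*\bar U$ gives $\mu\bar U(x)-x=Z(x)\to\mu^{-1}\int_0^\infty z_0=\me Y_1^2/(2\mu)$, which also pins down $c$. Conversely, if $\me Y_1^2=\infty$, then for every fixed $b>0$ the truncation $z_0\1_{[0,b]}$ is directly Riemann integrable, and $z_0\ge z_0\1_{[0,b]}\ge0$ together with the key renewal theorem yields $\liminf_{x\to\infty}Z(x)\ge\lim_{x\to\infty}(z_0\1_{[0,b]}*\bar U)(x)=\mu^{-1}\int_0^b z_0$; letting $b\to\infty$ forces $Z(x)\to\infty$, so $\mu\bar U(x)-x$ has no finite limit. (Alternatively, the converse can be obtained by an Abelian argument from $\int_0^\infty e^{-sx}Z(x)\,\dd x=(\mu-\widetilde G(s))/(s^2\widetilde G(s))$ with $\widetilde G(s):=\int_0^\infty e^{-sx}\mmp\{Y_1>x\}\,\dd x\to\mu$ and $(\mu-\widetilde G(s))/s\uparrow\tfrac12\me Y_1^2$ as $s\downarrow0$, forcing $\tfrac12\me Y_1^2=\mu\lim_{x\to\infty}Z(x)<\infty$.)

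I expect the converse ("only if") to be the main obstacle: classical second-order renewal theorems are normally stated only as sufficiency, so the necessity of $\me Y_1^2<\infty$ has to be argued separately. The device that unlocks it is precisely the representation $Z=z_0*\bar U$ with $z_0\ge0$, which identifies the solution of the renewal equation uniquely and makes $Z$ nonnegative, so that the truncated key renewal theorem (or the Abelian estimate) can be pushed through. The remaining technical points---the left-/right-continuity convention for the renewal function, the reduction of the $d$-arithmetic hypothesis on $T_1$ to nonarithmeticity of the ladder height, and the possible atom at $0$ of a weak ladder height---are routine in view of (c) and the remark following \eqref{renewal}.
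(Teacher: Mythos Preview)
Your proposal is correct. Parts (a)--(c) are handled exactly as the paper handles them, by citation to classical fluctuation theory (the paper cites Doney \cite{Doney:1980} for (a), the elementary renewal theorem for (b), and Bertoin--Doney \cite{Bertoin+Doney:1994} for (c)). For part (d) the paper simply refers to Example 3.10.3 in Resnick \cite{Resnick:2005} for sufficiency and to Sgibnev \cite{Sgibnev:1981} for necessity (or alternatively to the Blackwell theorem), whereas you actually carry out the second-order renewal argument: rewriting $\mu\bar U(x)-x$ as the solution $z_0*\bar U$ of a renewal equation with $z_0(x)=\int_x^\infty\mmp\{Y_1>t\}\,\dd t$, and then applying the key renewal theorem. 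Your route is precisely what underlies the cited Resnick example, so the sufficiency half is the same argument made explicit. For the converse you give a self-contained truncation argument (lower-bounding $Z$ by $(z_0\1_{[0,b]})*\bar U$ and sending $b\to\infty$) instead of invoking Sgibnev; this is a clean alternative and is in fact the same device the paper itself uses in the proof of Lemma \ref{finite}, so it fits naturally. The net effect is that your treatment of (d) is more self-contained than the paper's, at no real cost.
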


\begin{proof}
Part (a) is formula (4a) and Corollary 1 in \cite{Doney:1980}.
Part (b) is the elementary renewal theorem. For part (c), see, for
instance, p.~2156 in \cite{Bertoin+Doney:1994}. For part (d), first observe that $\me (T_1)_\pm^3<\infty$ is equivalent to $\me (T_{\tau^\prime_\pm})^2<\infty$. Now the result can be derived directly from the Blackwell theorem. Alternatively, while sufficiency of $\me (T_{\tau^\prime_\pm})^2<\infty$ follows from Example 3.10.3 on p.~242 in \cite{Resnick:2005}, necessity of that condition can be obtained along the lines of the aforementioned example with the help of Theorem 4 in \cite{Sgibnev:1981}.
\end{proof}

\subsection{Results on Lebesgue integrable and directly Riemann integrable functions}\label{sect:dri}

A function $t: \mr^+\to\mr^+$ is called {\it directly Riemann integrable} (dRi) on $\mr^+$, if

\noindent (a) $\overline{\sigma}(h)<\infty$ for each $h>0$ and

\noindent (b) $\lim_{h\to 0+} \big(\overline{\sigma}(h)-\underline{\sigma}(h)\big)=0$,
where $$\overline{\sigma}(h):=h\sum_{n\geq 1}\sup_{(n-1)h\leq
y<nh} t(y)\quad \text{and}\quad
\underline{\sigma}(h):=h\sum_{n\geq 1}\inf_{(n-1)h\leq
y<nh}t(y).$$ If $t$ is dRi, then $\lim_{h\to 0+}\overline{\sigma}(h)=\int_0^\infty t(y){\rm d}y<\infty$, where the integral is an improper Riemann integral.

Lemma \ref{dri} is concerned with an important step in the proof of Theorem \ref{impo}.
\begin{lemma}\label{dri}
Assume that $\underline{\sigma}(h_0)<\infty$ for some $h_0>0$ and that, for some $a\geq 0$, $x\mapsto e^{-ax}t(x)$ is a nonincreasing function on $\mr^+$. Then $t$ is dRi on $\mr^+$.
\end{lemma}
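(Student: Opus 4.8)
The plan is to reduce the statement to an explicit, mesh‑uniform control of the block sums, exploiting the product structure. Write $g(x):=e^{-ax}t(x)$, so that $g$ is a nonnegative nonincreasing function on $\mr^+$ with $g(0)=t(0)<\infty$. The basic observation, valid for every $h>0$ and every $k\in\mn_0$, is the two‑sided block estimate
\[
e^{akh}g((k+1)h)\ \le\ \inf_{kh\le y<(k+1)h}t(y)\ \le\ \sup_{kh\le y<(k+1)h}t(y)\ \le\ e^{a(k+1)h}g(kh),
\]
which follows at once from monotonicity of $x\mapsto e^{ax}$ (nondecreasing) and of $g$ (nonincreasing). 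Comparing the same block values with $\int_{kh}^{(k+1)h}e^{ax}g(x)\,\mathrm dx$, again using these monotonicities on a single block, is the technical workhorse; concretely, $\int_{(k-1)h}^{kh}e^{ax}g(x)\,\mathrm dx\ge h\,e^{a(k-1)h}g(kh)$ and $\int_{(k-1)h}^{kh}e^{ax}g(x)\,\mathrm dx\le h\,e^{akh}g((k-1)h)$ for $k\ge1$.

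First I would extract Lebesgue integrability of $t$ from the single hypothesis $\underline{\sigma}(h_0)<\infty$. The left block bound with mesh $h_0$ gives $\underline{\sigma}(h_0)\ge h_0\sum_{n\ge1}e^{a(n-1)h_0}g(nh_0)$, whence $\sum_{n\ge1}e^{anh_0}g(nh_0)<\infty$; summing the upper block/integral comparison then yields $I:=\int_0^\infty t(x)\,\mathrm dx=\int_0^\infty e^{ax}g(x)\,\mathrm dx<\infty$. Running the lower block/integral comparison for an arbitrary mesh $h>0$ now produces the key inequality
\[
\sum_{k\ge1}e^{akh}g(kh)\ \le\ \frac{e^{ah}}{h}\,I\qquad\text{for every }h>0 .
\]

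Property (a) is then immediate from the right block bound: $\overline{\sigma}(h)\le h\sum_{k\ge0}e^{a(k+1)h}g(kh)=he^{ah}\bigl(g(0)+\sum_{k\ge1}e^{akh}g(kh)\bigr)\le he^{ah}g(0)+e^{2ah}I<\infty$ for all $h>0$. For property (b), subtracting the two block bounds gives $\overline{\sigma}(h)-\underline{\sigma}(h)\le h\sum_{k\ge0}\bigl(e^{a(k+1)h}g(kh)-e^{akh}g((k+1)h)\bigr)$, and I would split the summand as $e^{akh}(e^{ah}-1)g(kh)+e^{akh}\bigl(g(kh)-g((k+1)h)\bigr)$. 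The first part contributes at most $(e^{ah}-1)\bigl(hg(0)+e^{ah}I\bigr)$ by the key inequality; the second is handled by Abel summation, whose partial sums telescope to $g(0)+(e^{ah}-1)\sum_{k=1}^{K}e^{a(k-1)h}g(kh)-e^{aKh}g((K+1)h)$ with the last term nonpositive, so after multiplying by $h$ and letting $K\to\infty$ this part is at most $hg(0)+(e^{ah}-1)e^{ah}I$. Since $I$ is a fixed finite constant while $e^{ah}-1\to0$ and $hg(0)\to0$ as $h\to0+$, both parts vanish, which is (b); note also that $\underline{\sigma}(h)\le\overline{\sigma}(h)<\infty$ by (a), so the difference is a genuine nonnegative quantity and no $\infty-\infty$ issue arises.

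I expect the only genuinely delicate point to be the first step — deducing global Lebesgue integrability, and with it finiteness of $\overline{\sigma}(h)$ at \emph{every} mesh, from finiteness of $\underline{\sigma}$ at a \emph{single} mesh $h_0$. This is false without the structural hypothesis (a function that is $0$ off a dense null set but very large on it has $\underline{\sigma}\equiv0$ yet $\overline{\sigma}\equiv\infty$), so the representation $t=e^{ax}g$ and the two opposite directions of monotonicity must be used exactly as above; everything else is routine block estimation. For $a=0$ the statement degenerates to the familiar fact that a nonincreasing Lebesgue‑integrable function is dRi, and all the bounds above remain valid verbatim.
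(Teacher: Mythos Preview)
Your proof is correct. Both the paper and you exploit the factorisation $t(x)=e^{ax}g(x)$ with $g$ nonincreasing, but the executions differ. The paper condenses the block comparison into the single inequality $e^{2ah}\underline{\sigma}(h)\ge \overline{\sigma}(h)-h\sup_{0\le y<h}t(y)$, then invokes an external mesh-transfer estimate (Tsirelson's $\overline{\sigma}(h)\le (1+2h/h_0)\overline{\sigma}(h_0)$) to propagate finiteness of $\overline{\sigma}$ from $h_0$ to all $h$, and finishes by combining that inequality with $\underline{\sigma}(h)\le I\le \overline{\sigma}(h)$. You instead first extract $I<\infty$ directly from $\underline{\sigma}(h_0)<\infty$ via the block/integral comparison, then use the uniform bound $\sum_{k\ge1}e^{akh}g(kh)\le e^{ah}I/h$ together with an Abel summation to control $\overline{\sigma}(h)-\underline{\sigma}(h)$ explicitly. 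Your route is a bit longer but entirely self-contained, avoiding the Tsirelson citation; the paper's route is shorter but leans on that reference for the mesh change.
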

\begin{remark}
Lemma \ref{dri} is a strengthening of the well-known fact (see, for instance, Corollary 2.17 in \cite{Durrett+Liggett:1983}) that $t$ is dRi provided that $t$ is Lebesgue integrable and $x\mapsto e^{-ax}t(x)$ is a nonincreasing function. In Lemma \ref{dri} we require less, namely that $\underline{\sigma}(h_0)<\infty$ for some $h_0>0$ which is of course true if $t$ is Lebesgue integrable.
\end{remark}
\begin{proof}
Using twice the assumed monotonicity we obtain
\begin{multline*}\qquad \infty>e^{2ah_0}\underline{\sigma}(h_0)=e^{2ah_0}h_0\sum_{n\geq 1}\inf_{(n-1)h_0\leq
y<nh_0}(e^{ay} e^{-ay}t(y))\\
\geq e^{ah_0}h_0 \sum_{n\geq 1}t(nh_0)\geq \overline{\sigma}(h_0)-h_0\sup_{0\leq y<h_0}\,t(y).\qquad
\end{multline*}
This shows that $\overline{\sigma}(h_0)<\infty$. Remark 2.9 in \cite{Tsirelson:2013} states that, for $h>0$, $$\overline{\sigma}(h)\leq (1+2h/h_0)\overline{\sigma}(h_0)$$ which implies that $\overline{\sigma}(h)<\infty$ for each $h>0$. Hence, also $\underline{\sigma}(h)<\infty$ for each $h>0$ because $\underline{\sigma}(h)\leq \overline{\sigma}(h)$. Repeating now, for each $h>0$, the argument based on monotonicity we conclude that, for each $h>0$,
\begin{equation}\label{aux100}
e^{2ah}\underline{\sigma}(h)\geq \overline{\sigma}(h)-h\sup_{0\leq y<h}\,t(y).
\end{equation}
In view of $$\underline{\sigma}(h)\leq I:=\int_0^\infty t(y){\rm d}y\leq \overline{\sigma}(h) <\infty,\quad h>0,$$ we conclude that $t$ is Lebesgue integrable and that $\lim\sup_{h\to 0+}\underline{\sigma}(h)\leq I$, whence $$\lim_{h\to 0+}\underline{\sigma}(h)(e^{2ah}-1)=0.$$ Noting that $\lim_{h\to 0+}h\sup_{0\leq y<h}\,t(y)=0$ an appeal to \eqref{aux100} reveals that $\lim\sup_{h\to 0+}(\overline{\sigma}(h)-\underline{\sigma}(h))\leq 0$ which completes the proof.
\end{proof}

Lemma \ref{integr} is needed to justify statements made in Remark \ref{rem_im}.

\begin{lemma}\label{integr}
Let $t:\mr^+\to\mr^+$ and $V^\ast$ be the right-continuous renewal function of a standard random walk $(S_n^\ast)_{n\in\mn_0}$
with nonnegative jumps of finite mean $\mu^\ast$ which have a
nonarithmetic distribution.

\noindent (a) There exist improperly Riemann integrable $t$ and the renewal functions $V^\ast$ such that
\begin{equation}\label{integr1}
\int_{[0,\,\infty)}t(x+y){\rm d}V^\ast(y)<\infty
\end{equation}
fails to hold for each $x\geq 0$.

\noindent (b) If $t$ is dRi on $\mr^+$, then \eqref{integr1} holds for each $x\geq 0$.

\noindent (c) There exist continuous $t$ and the renewal functions $V^\ast$ such that \eqref{integr1} holds for some $x\geq 0$,
yet $\int_0^\infty t(y){\rm d}y=\infty$.

\noindent (d) Assume that
\begin{equation}\label{integr2}
\sup_{x\geq 0} \int_{[0,\,\infty)}t(x+y){\rm d}V^\ast(y)<\infty.
\end{equation}
Then $\int_0^\infty t(y){\rm d}y<\infty$.
\end{lemma}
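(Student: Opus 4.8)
Parts (a) and (c) will be handled by exhibiting explicit examples, part (b) is a routine consequence of direct Riemann integrability combined with the subadditivity of $V^\ast$, and part (d) --- the substantive assertion --- I would prove by a double application of Tonelli's theorem, reducing it to a uniform lower bound on the increments of $V^\ast$ supplied by Blackwell's renewal theorem. Throughout write $r(x):=\int_{[0,\infty)}t(x+y)\,\mathrm dV^\ast(y)$ and let $\mu^\ast\in(0,\infty)$ be the jump mean.

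For part (b), I would choose $h=1$ and use that the renewal function of a random walk with nonnegative jumps is subadditive, so $\int_{(n-1,n]}\mathrm dV^\ast=V^\ast(n)-V^\ast(n-1)\le V^\ast(1)$ for every $n\ge1$; splitting $[0,\infty)$ along the unit intervals and bounding $t(x+y)$ on each by its supremum there gives $r(x)\le V^\ast(1)\sum_{n\ge1}\sup_{n-1\le z\le n}t(x+z)\le 2V^\ast(1)\,\overline{\sigma}(1)<\infty$, the factor $2$ accounting for the shift by $x$ exactly as in the proof of Lemma \ref{lem:simple}. For part (a) I would take the jump law $\tfrac12\delta_1+\tfrac12\mu_c$ with $\mu_c$ a nonarithmetic continuous probability measure on $\mr^+$; since only the $\delta_1^{*k}$ summands in $V^\ast=\sum_k(\tfrac12\delta_1+\tfrac12\mu_c)^{*k}$ carry atoms, one gets $V^\ast(\{n\})=2^{-n}$ for every $n\in\mn_0$, and then $t:=\sum_{n\ge1}4^n\mathbf1_{[n,\,n+8^{-n}]}$ is improperly Riemann integrable (as $\int_0^\infty t=\sum_n2^{-n}<\infty$) but $r(0)\ge\sum_n4^n\,V^\ast(\{n\})=\sum_n2^n=\infty$, so \eqref{integr1} fails at $x=0$.

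The heart of the proof is part (d). Assume $M:=\sup_{x\ge0}r(x)<\infty$. Fixing $c>0$, by two applications of Tonelli's theorem (all integrands being nonnegative) one has
\[
\int_0^c r(x)\,\mathrm dx=\int_{[0,\infty)}\Big(\int_y^{y+c}t(u)\,\mathrm du\Big)\mathrm dV^\ast(y)=\int_0^\infty t(u)\,B_c(u)\,\mathrm du,\qquad B_c(u):=\int_{(u-c,\,u]\cap[0,\infty)}\mathrm dV^\ast,
\]
whose left-hand side is at most $cM$. What remains is to bound $B_c(u)$ below by a positive constant uniformly in $u\ge0$ for a suitable $c$. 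For $0\le u<c$ one has $B_c(u)=V^\ast(u)\ge V^\ast(0)\ge1$, since the renewal function counts the step $S_0^\ast=0$. For $u\ge c$, monotonicity of $V^\ast$ gives $B_c(u)=V^\ast(u)-V^\ast(u-c)\ge V^\ast(u)-V^\ast(u-1)$ once $c\ge1$, while Blackwell's renewal theorem (applicable because the jump law is nonarithmetic with finite mean) yields $V^\ast(u)-V^\ast(u-1)\to1/\mu^\ast$, hence $V^\ast(u)-V^\ast(u-1)\ge1/(2\mu^\ast)$ for $u\ge u_1$, some $u_1$. Taking $c:=\max(1,u_1)$ forces $B_c(u)\ge\delta_0:=\min\big(1,(2\mu^\ast)^{-1}\big)>0$ for all $u\ge0$, so $\delta_0\int_0^\infty t(u)\,\mathrm du\le\int_0^c r(x)\,\mathrm dx\le cM$, and therefore $\int_0^\infty t(u)\,\mathrm du\le cM/\delta_0<\infty$.

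For part (c) one needs a continuous $t$ and a renewal measure $V^\ast$ with $r(x_0)<\infty$ for some $x_0$ yet $\int_0^\infty t=\infty$. The mechanism I would use is to place all of the mass of $t$ on a sequence of shrinking intervals $I_k\subset(k-1,k)$ along which $V^\ast$ is very thin relative to Lebesgue measure --- which is compatible with Blackwell's theorem, as the latter controls only the \emph{average} of the Lebesgue density of $V^\ast$ over unit intervals --- and to calibrate the heights of $t$ on the $I_k$ so that $\sum_k(\sup_{I_k}t)\,|I_k|=\infty$ while $\sum_k(\sup_{I_k}t)\,V^\ast(I_k)<\infty$. The hard part will be manufacturing such a $V^\ast$: one must engineer a jump density whose renewal density dips deeply on arbitrarily remote short intervals and verify that these dips are not washed out by convolution. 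Part (d) itself is routine once the uniform lower bound on $B_c$ is in hand, blending $V^\ast(0)\ge1$ near the origin with Blackwell's theorem far out.
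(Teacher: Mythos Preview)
Your part (b) is essentially the paper's argument. Your part (d) is correct and takes a genuinely different route: the paper introduces a random variable $\xi_0^\ast$ with the equilibrium (stationary) distribution, so that $(\xi_0^\ast+S_n^\ast)_{n\ge0}$ is a stationary renewal process with $\E\tilde N^\ast(x)=x/\mu^\ast$, whence $\int_0^\infty t(y)\,\mathrm dy=\mu^\ast\,\E\sum_{n\ge0}t(\xi_0^\ast+S_n^\ast)=\mu^\ast\int r(x)\,\mathrm d\mmp\{\xi_0^\ast\le x\}\le \mu^\ast M$. Your approach---averaging $r$ over a long enough window $[0,c]$ and bounding the resulting kernel $B_c$ below via Blackwell---is more hands-on but perfectly valid, and has the mild advantage of not invoking the stationary coupling.

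There are, however, two genuine gaps.

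\textbf{Part (c) is not proved.} You only sketch a plan, and that plan (engineering a renewal \emph{density} with deep dips on short intervals, then verifying the dips survive convolution) is far harder than necessary. The paper's construction is almost trivial once you see it: take $V^\ast$ \emph{purely atomic}, namely the same two-valued walk on $\{\alpha,1-\alpha\}$ with $\alpha$ irrational as in (a), so that $V^\ast$ is concentrated on the countable set $\{b_n\}$. Now let $t$ be the tent function whose graph is the sequence of isosceles triangles of height $1$ with bases $[b_n,b_{n+1}]$; then $t$ is continuous, $\int_0^\infty t=\tfrac12\sum_n(b_{n+1}-b_n)=\infty$, yet $t(b_n)=0$ for every $n$, so $r(0)=\int t\,\mathrm dV^\ast=0<\infty$. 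No analysis of renewal densities is needed---the point is to exploit atomicity, not to fight it.

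\textbf{Part (a) is incomplete.} You only establish that \eqref{integr1} fails at $x=0$, but the lemma requires failure for \emph{every} $x\ge0$. Your example does not obviously have this property: for, say, $x=1/2$ one has $t(x+n)=0$ for every integer $n\ge1$ (since $1/2\notin[0,8^{-m}]$ for any $m\ge1$), so the atomic part of $V^\ast$ contributes nothing to $r(1/2)$, and if $\mu_c$ has a bounded density the continuous part of $V^\ast$ will also have locally bounded density, giving $r(1/2)\le C\int t<\infty$. The paper uses a purely atomic $V^\ast$ (two-point jump law at $\alpha$ and $1-\alpha$, $\alpha$ irrational) with tents centred at the atoms; it treats $x=0$ explicitly and declares the extension to $x>0$ ``obvious'', but the mechanism there---a dense additive semigroup of atoms---is quite different from yours and more amenable to that extension.
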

\begin{proof}
(a) We only consider the case $x=0$. A modification needed to treat the case $x>0$ is obvious.  We use the same $t$ and $V^\ast$ as in Example 3.10.2 on p.~233 of \cite{Resnick:2005} designed to demonstrate that the key renewal theorem can fail for integrands which are not dRi.

Let a random variable $S_1^\ast$ take two values $\alpha$ and $1-\alpha$ for some {\it irrational} $\alpha\in (0,1)$.
Then the distribution of $S_1^\ast$ is nonlattice, and the renewal function $V^\ast$ is piecewise constant with jumps at the points of the form
$k_1\alpha+k_2(1-\alpha)$, $k_1, k_2\in\mn_0$, $k_1+k_2>0$. Arrange these points in increasing order and denote the resulting configuration by $b_1<b_2<\ldots$ Consider an infinite sequence of isosceles triangles which do not overlap. They are located in $\mr^+\times \mr^+$ and have bases situated on the $x$-axis. The triangles are enumerated $1$, $2,\ldots$ from left to right. The base of the $n$th triangle is centered at $b_n$ and has length $s_n$; the height of the $n$th triangle is equal to $1$. The sequence $(s_n)_{n\in\mn}$ is assumed to satisfy $\sum_{n\geq 1}s_n<\infty$.  Define the function $t$ as follows:
while $t(x)=0$ for $x$ which do not belong to the bases of the triangles, its graph passes through the equal sides of the triangles for all the other $x$. Plainly,
$\int_0^\infty t(x){\rm d}x=2^{-1}\sum_{n\geq 1}s_n<\infty$, that
is, the area of the region between the graph $y=t(x)$ and the
$x$-axis is equal to the sum of the areas of all the triangles. Thus,
$f$ is improperly Riemann integrable on $\mr^+$. Finally, since $t(b_n)=1$ for $n\in\mn$,
\begin{equation*}
\int_{[0,\,\infty)}t(x){\rm
d}V^\ast(x)=\sum_{n\geq 1}t(b_n)\sum_{k\geq 1}\mmp\{S^\ast_k=b_n\}=\sum_{n\geq 1}\sum_{k\geq 1}\mmp\{S^\ast_k=b_n\}=\infty.
\end{equation*}

\noindent (b) This follows from $$\int_{[0,\,\infty)}t(x+y){\rm d}V^\ast(y)\leq V^\ast(1)\sum_{n\geq \lfloor x\rfloor+1}\sup_{n-1\leq y<n}t(y)<\infty,\quad x\geq 0$$ which is just \eqref{dri3} with $t$ and $V^\ast$ replacing $g$ and $V$, respectively.

\noindent (c) We use the same $V^\ast$ as in part (a). To construct $t$, consider an infinite sequence of isosceles triangles enumerated $1$, $2,\ldots$ from left to right. They are located in $\mr^+\times \mr^+$, have heights $1$, and the endpoints of the base of the $n$th triangle are $b_n$ and $b_{n+1}$. Now we put $t(x)=0$ for $x<b_1$ and require that
the graph of $y=t(x)$ passes through the equal sides of the triangles for all the other positive $x$. Then $\int_0^\infty t(x){\rm d}x=(1/2)\lim_{n\to\infty}(b_n-b_1)=\infty$. On the other hand, $\int_{[0,\,\infty)}t(y){\rm d}V^\ast(y)=0$, so that \eqref{integr1} holds with $x=0$.

\noindent (d) Let $\xi_0^\ast$ be a random variable independent of $(S^\ast_n)_{n\geq 0}$ with distribution function $\mmp\{\xi_0^\ast\leq x\}=(1/\mu^\ast)\int_0^x\mmp\{S_1^\ast>y\}{\rm d}y$. On the one hand,
\begin{align*}
  \me\sum_{n\geq 0}t(\xi_0^\ast+S_n^\ast) &=\int_{[0,\,\infty)}\me\sum_{n\geq 0}t(x+S_n^\ast){\rm d}\mmp\{\xi_0^\ast\leq x\}\\
  &=\int_{[0,\,\infty)}\int_{[0,\,\infty)}t(x+y){\rm d}V^\ast(y){\rm d}\mmp\{\xi_0^\ast\leq x\}<\infty,
\end{align*}
where the finiteness is secured by \eqref{integr2}. On the other hand, the random process $(\tilde{N}^\ast(x))_{x\geq 0}$ defined by
\begin{equation*}
\tilde{N}^\ast(x) := \#\{n \in \mn_0: \xi_0^\ast+ S_n^\ast\leq x\},\quad x\geq 0
\end{equation*}
is a stationary renewal process (the term is standard but misleading; actually the process has stationary increments) which particularly implies that $\tilde{N}^\ast(x)=x/\mu^\ast$ for $x\geq 0$. Hence, $\int_0^\infty t(y){\rm d}y= \me\sum_{n\geq 0}t(\xi_0^\ast+S_n^\ast)<\infty$.
\end{proof}

\vspace{5mm}

\noindent {\bf Acknowledgement}. The authors thank the organizers
and participants of the conference `Branching in Innsbruck'
(September 2019). In particular, the talk `1-stable fluctuations
of branching Brownian motion at critical temperature' delivered by
Pascal Maillard during that conference has given a major impetus
to the research reported here. This project was initiated while D.B.\ and A.I.\ were visiting LAGA - Institut Galilée, Université Sorbonne Paris Nord in October 2019. Both gratefully acknowledge hospitality and the financial support. Also, D.B.\ and A.I.\ thank Alexander Marynych for a useful discussion. A major part of A.I.'s work was done under the support of Ulam programme funded by the Polish national agency for academic exchange (NAWA), project no. PPN/ULM/2019/1/00010/DEC/1. D.B. was partially supported by the National Science Center,
Poland (Sonata Bis, grant number DEC-2014/14/E/ST1/00588).


\begin{thebibliography}{30}

\bibitem{Aidekon:2013} E. A\"{\i}d\'{e}kon, \textit{Convergence in law of the minimum of a branching random walk}. Ann. Probab. \textbf{41} (2013),
1362--1426.

\bibitem{Aidekon+Shi:2014} E. Aidekon and Z. Shi, \textit{The Seneta-Heyde scaling for the branching random walk}. Ann. Probab. \textbf{42} (2014), 959--993.

\bibitem{Alsmeyer+Mallein:2019+} G. Alsmeyer and B. Mallein, \textit{A simple method to find all solutions to the functional
equation of the smoothing transform}. Preprint (2019) available at
{\tt https://arxiv.org/abs/1907.04111}

\bibitem{Alsmeyer+Meiners:2013} G. Alsmeyer and M. Meiners, \textit{Fixed points of the smoothing
transform: two-sided solutions}. Probab. Theory Related Fields.
\textbf{155} (2013), 165--199.

\bibitem{Bertoin+Doney:1994} J. Bertoin and R.~A. Doney, \textit{On conditioning a random walk to stay nonnegative}.
Ann. Probab. \textbf{22} (1994), 2152--2167.

\bibitem{Biggins+Kyprianou:1997} J.~D. Biggins and A.~E. Kyprianou, \textit{Seneta-Heyde norming in the branching random walk}.
Ann. Probab. \textbf{25} (1997), 337--360.

\bibitem{Biggins+Kyprianou:2004} J.~D. Biggins and A.~E.
Kyprianou, \textit{Measure change in multitype branching}. Adv.
Appl. Probab. \textbf{36} (2004), 544--581.

\bibitem{Bingham+Goldie+Teugels:1989} N.~H. Bingham, C.~M. Goldie and J.~L. Teugels, \textit{Regular variation}. Cambridge University Press,
1989.

\bibitem{Brofferio+Buraczewski+Damek:2012} S. Brofferio, D. Buraczewski and E. Damek,
\textit{On the invariant measure of the random difference equation
$X_n=A_nX_{n-1}+B_n$ in the critical case}.  Ann. Inst. Henri
Poincar\'e Probab. Stat. \textbf{48} (2012), 377--395.

\bibitem{Buraczewski:2007} D. Buraczewski, \textit{On invariant measures of stochastic recursions in a critical case}. Ann. Appl. Probab. \textbf{17} (2007), 1245--1272.

\bibitem{Buraczewski:2009} D. Buraczewski, \textit{On tails of fixed points of the
smoothing transform in the boundary case}. Stoch. Proc. Appl.
\textbf{119} (2009), 3955--3961.

\bibitem{Chen:2015} X. Chen, \textit{A necessary and sufficient condition for the nontrivial limit of
the derivative martingale in a branching random walk}. Adv. Appl.
Probab. \textbf{47} (2015), 741--760.

\bibitem{Lambert} R.M.~Corless, ~G.~H.~Gonnet, D.~E.~Hare, D.~J.~Jeffrey and D.~E.~Knuth,
\textit{On the Lambert $W$ function}. Adv.  Comp.  Math. \textbf{5} (1996), 329--359.

\bibitem{Doney:1980} R.~A. Doney, \textit{Moments of ladder heights in random walks}. J. Appl. Probab.
\textbf{17} (1980), 248--252.

\bibitem{Durrett+Liggett:1983} R. Durrett and T. Liggett, \textit{Fixed points of the
smoothing transformation}. Z. Wahrscheinlichkeitstheorie Verw.
Geb. \textbf{64} (1983), 275--301.

\bibitem{Feller:1971} W. Feller, \textit{An introduction to probability theory and its applications}. Vol II, 2nd Edition,
Wiley, 1971.

\bibitem{Gnedenko+Kolmogorov:1968} B.~V. Gnedenko and A.~N. Kolmogorov, \textit{Limit distributions
for sums of independent random variables}, revised edition.
Addison-Wesley Publishing Co., 1968.

\bibitem{Gradshteyn+Ryzhik:2007} I.~S. Gradshteyn and I.~M. Ryzhik, \textit{Table of integrals, series, and
products}, 7th edition. Academic Press, 2007.

\bibitem{Iksanov:2016} A. Iksanov, \textit{Renewal theory for perturbed random walks and similar processes}. Birkh\"{a}user, 2016.

\bibitem{Iksanov+Kolesko+Meiners:2020} A. Iksanov, K. Kolesko and M. Meiners,
\textit{Fluctuations of Biggins' martingale at complex
parameters}. Ann. Inst. Henri Poincar\'e Probab. Stat.  (2020+), to appear.


\bibitem{Iksanov+Meiners:2015} A. Iksanov and M. Meiners, \textit{Fixed points of multivariate smoothing transforms with scalar
weights}. ALEA, Lat. Am. J. Probab. Math. Stat. \textbf{12}
(2015), 69--114.

\bibitem{Kahane+Peyriere:1976} J.-P. Kahane and J. Peyrière, \textit{Sur certaines martingales de {B}enoit {M}andelbrot}. Adv. Math. \textbf{22}
(1976), 131--145.

\bibitem{Liu:1998} Q. Liu, \textit{Fixed points of a generalized smoothing
transformation and applications to the branching random walk}.
Adv. Appl. Probab. \textbf{30} (1998), 85--112.

\bibitem{Lyons:1997}
R.~Lyons, \textit{A simple path to Biggins’ martingale
convergence for branching random walk.} In: Classical and Modern
Branching Processes. IMA Volumes in Mathematics and its
Applications \textbf{84} (1997), 217--221. Springer, New York.

\bibitem{Madaule:2018} T. Madaule, \textit{The tail distribution of the derivative martingale and the global minimum of the branching random walk}. Preprint (2016) available at {\tt https://arxiv.org/abs/1606.03211}.

\bibitem{Madaule:2017} T. Madaule, \textit{Convergence in law for the branching random walk
seen from its tip}. J. Theor. Probab. \textbf{30} (2017), 27--63.

\bibitem{Maillard:2012} P. Maillard, \textit{Branching Brownian motion with selection}. PhD
thesis (2012) available at {\tt https://arxiv.org/abs/1210.3500}

\bibitem{Maillard+Pain:2019} P. Maillard and M. Pain, \textit{1-stable fluctuations in branching Brownian motion at critical
temperature I: The derivative martingale}. Ann. Probab.
\textbf{47} (2019), 2953--3002.

\bibitem{Mallein:2016} B. Mallein, \textit{Asymptotic of the maximal displacement in a branching random walk}. Graduate J. Math. \textbf{1} (2016), 92--104.

\bibitem{Mallein:2018} B. Mallein, \textit{Genealogy of the extremal process of the branching random walk}. ALEA, Lat. Am. J. Probab. Math. Stat. \textbf{15} (2018), 1065--1087.

\bibitem{Meiners+Mentemeier:2017} M. Meiners and S. Mentemeier, \textit{Solutions to complex smoothing equations}. Probab. Theory
Related Fields. \textbf{168} (2017), 199--268.

\bibitem{Peyriere:1974} J. Peyrière, \textit{Turbulence et dimension de Hausdorff}. C. R. Acad. Sci. Paris. \textbf{278} (1974), 567--569.

\bibitem{Port+Stone:1969} S.~ C. Port and C.~ J. Stone, \textit{Potential theory of random walks on Abelian
groups.} Acta Math. \textbf{122} (1969), 19--114.

\bibitem{Ren etal:2019+} Y.-X. Ren, R. Song, Z. Sun and J. Zhao, \textit{Stable central limit theorems for super Ornstein-Uhlenbeck processes}. Electron. J. Probab. \textbf{24}, paper no.~141, 42 pp. 

\bibitem{Resnick:2005} S.~ I. Resnick, \textit{Adventures in stochastic processes}. 4rd printing, Birkhäuser, 2005.

\bibitem{Sgibnev:1981} M.~ S. Sgibnev, \textit{Renewal theorem in the case of an infinite variance}. Sib. Math. J. \textbf{22} (1982),
787--796.

\bibitem{Shi:2015} Z. Shi, \textit{Branching random walks. Lecture notes from the 42nd Probability Summer School held in Saint Flour, 2012}. Lecture Notes in Mathematics, 2151. École d'Été de Probabilités de Saint-Flour. Springer, Cham, 2015. x+133 pp.

\bibitem{Spitzer:2001} F. Spitzer, \textit{ Principles of random walk}. 2nd edition, 2nd printing, Springer, 2001.

\bibitem{Tanaka:1989} H. Tanaka, \textit{Time reversal of random walks in
one-dimension}. Tokyo J. Math. \textbf{12} (1989), 159--174.

\bibitem{Tsirelson:2013} B. Tsirelson, \textit{From uniform renewal theorem to uniform large and
moderate deviations for renewal-reward processes}. Electron.
Commun. Probab. \textbf{18} (2013), paper no.~ 52, 13 pp.

\end{thebibliography}
\end{document}